\newtheorem{thm}{Theorem}
\newtheorem{lemma}{Lemma}
\theoremstyle{definition}
\newtheorem{definition}{Definition}
\newcommand{\field}[1]{\mathbb{#1}}
\newcommand{\R}{\field{R}}
\newcommand{\Z}{\field{Z}}
\newcommand{\N}{\field{N}}
\renewcommand{\P}{\field{P}}
\def\LL{\mathscr L}
\def\GRS{\operatorname{GRS}}
\def\myint{\operatorname{int}}
\def\mcap{\operatorname{cap}}
\def\CC{\mathbb C}
\def\PP{\mathbb P}
\newcommand{\ol}{\overline}
\newcommand{\wt}{\widehat}
\renewcommand{\Re}{\operatorname{Re}}
\newcommand{\mc}{\mathcal}
\newcommand{\mbb}{\mathbb}
\newcommand{\cop}{\operatorname{cap}}
\newcommand{\const}{\operatorname{const}}
\renewcommand{\Im}{\operatorname{Im}}
\newcommand{\supp}{\operatorname{supp}}
\renewcommand{\(}{\left( }
\renewcommand{\)}{\right) }
\renewcommand{\[}{\left[ }
\renewcommand{\]}{\right] }
\begin{document}

\centerline{\Large \bf Zero distribution for Angelesco Hermite--Pad\'e polynomials.}
\vspace{1cm}

\centerline{E.~A.~Rakhmanov}

\begin{abstract}
We consider the problem of zero distribution of the first kind Hermite--Pad\'e
polynomials  associated with a vector function $\vec f = (f_1, \dots, f_s)$
whose components $f_k$ are functions with
a finite number of branch points in plane. We assume that branch sets of
component functions  are well enough separated (which constitute the
Angelesco case). Under this condition we prove a theorem on limit zero
distribution for such polynomials. The limit measures are defined in terms
of a known vector equilibrium problem.

Proof of the theorem is based on the methods developed by H.~Stahl
\cite{Sta85a}--\cite{Sta86b}, A.~A.~Gonchar and the author \cite{GoRa87},
\cite{Rak12}. These
methods obtained some further generalization in the paper in application to
systems of polynomials defined by systems of complex orthogonality
relations.

Together with the characterization of the limit zero distributions of
Hermite--Pad\'e polynomials by a vector equilibrium problem  we consider an
alternative characterization using a Riemann surface $\mc R(\vec f)$
associated with $\vec f$.  In this terms we present a more general (without
Angelesco condition) conjecture on the zero distribution of Hermite--Pad\'e
polynomials.

Bibliography: 72 items.
\end{abstract}


\section{ Introduction. }

\subsection{Statement of the main theorems.}

Let $s\in \N$ and $\vec f =(f_1, f_2, \dots, f_s )$ be a vector of analytic functions defined by their Laurent expansion at infinity
\begin{equation}
f_k(z)= \sum_{m=0}^\infty \frac{f_{m,k}}{z^m}, \quad k=0,1,\dots.
\label{1}
\end{equation}
For a natural $n\in\N$ the $n$-th vector of first kind {\it Hermit-Pad\'e
polynomials} $q_{n,0}$, $q_{n,1}$, $\dots$, $q_{n,s}$ is defined by the
following relation
\begin{equation}
R_n(z):=\bigl(
q_{n,0}+q_{n,1}f_1+q_{n,2}f_2+\dots+q_{n,s}f_s
\bigr)(z)
=O\(\frac1{z^{ns+s}}\),
\label{2}\end{equation}
as $ z\to\infty$ and the condition $q_{n,k }\in \P_n, \  k=0,1,\dots, s$
where $\P_n$  that is notation for set of all polynomials of degree at most
${n}$. Function $R_n$ defined by first equality above is called the
remainder.

The construction \eqref{1} - \eqref{2} is classical. It was introduced by Hermite in 1873 for the case $f_k(z) = \exp(k/z)$ who used it to prove that
number $e$ is transcendental. Hermite student Pad\'e investigated in details case $s = 1$ which was later called after him. For the further references we refer to \cite{NiSo88}, \cite{Nu84},
\cite{BaGr96}.



The main result of the paper is related to functions $f_k$ with finite number of branch points which we denote $\mathcal A$.
More exactly, for a fixed set $e = \{a_1,\dots,a_p\}$  of $p\geq 2$ distinct points we denote by $\mathcal{A}_e$ class of function elements 
at infinity which admit analytic continuation along any curve in the
domain $\Omega = \overline {\CC} \setminus e$  which begins at infinity.
We will assume that a function $f \in \mathcal{A}_e$ is not single
valued in $\Omega;$ some of the points in $e$ may be regular or single
valued isolated singular points, but there are at least two branch
points.
Then, we define
$$
  \mathcal A = \cup \mathcal{A}_e
$$
\noindent where union is taken over all finite sets $e$ of $p \geq 2$ distinct points in plane. In other words,  $\mathcal A$ is the space of all analytic functions in plane with a finite number of branch points. In particular, $f \in \mathcal A$ means that there is a finite set $e = e(f) \in \CC$ such that $f \in \mathcal A_e$. We will write $\vec f =(f_1, \dots,f_s) \in \mc A$ if $f_j \in \mc A$ for all component functions $f_j$. \\


For certain subclass of vector functions $\vec f \in \mc A$ we will prove weak convergence of
zero counting measures
\begin{equation}\label{Meas}
\mu_{n,k} = \frac 1n \sum_{q_{n,k}(x) = 0}\delta (x) \to \lambda_k, \quad \text{as}\quad n\to\infty
\end{equation}
 of the Hermite--Pad\'e polynomials $q_{n,k}$ in \eqref{2} as $n\to \infty$ and characterize their limits $\lambda_{k}$. Convergence $\to$ in application to measures will always mean
weak convergence.

The precise condition on $\vec f$ in our convergence theorem, which we will call the Angelesco condition, is  formulated in terms of a vector equilibrium problem associated with $\vec f$. We need to introduce some definitions first.\\

For a function $f \in \mathcal A$ we denote by $ \mc{F}_0(f)$  the set of cuts that make $f$ single valued. Formally, $ \mc{F}_0(f)$ is set of compact sets  $F \subset \CC$ satisfying the condition $f \in H(\CC \setminus F)$. By $H(\Omega)$ we denote, as usual, space of holomorphic (analytic and single valued) functions in a domain $\Omega$.

It is convenient to work with a more restricted class of cuts. We will
use the subclass $\mc{F}(f)\subset \mc{F}_0(f)$ such that any compact
$F \in \mc{F}(f)$ has a finite number of connected components not
dividing the plane and each connected component contains at least two
branch points of $f$.

For a vector function $\vec f = (f_1, \dots, f_s)$ whose components $f_k$ are from $\mc A$  we denote by $\vec {\mc F} (\vec f)$ class of all vector compact sets $\vec F = (F_1, \dots , F_s) $ such that $f_k \in \mc{F}(f_k)$ for $ k =1, \dots, s$.

By $\mc{M}(F)$ we denote set of all unit positive Borel measures on a compact set $F$.
For a fixed vector-compact set $\vec{F}=\(F_1,\dotsc,F_p\)\in\mc{F}$
we define a family of vector-measures
$$
\vec{\mc{M}}=\vec{\mc{M}}(\vec{F})=\left\{\vec{\mu}
=\(\mu_1,\dotsc,\mu_p\):\mu_j\in\mc{M}\(F_j\)\right\}.
$$

The key component of any vector equilibrium problem is the interaction matrix. The matrix $A$ associated with the Angelesco equilibrium problem is the following
\begin{equation} \label{4-n}
A=\left\|a_{ij}\right\|^s_{i,j} =1, \qquad \text{where} \quad a_{ii} = 2, \quad  \text{and} \quad
a_{ij} = 1, \ \text{for}\ i \ne j,  \quad i, j = 1, \dots , s.
\end{equation}

Accordingly, the energy of a vector measure $\vec{\mu} \in \vec{\mc M}$ associated with matrix $A$ is defined by
\begin{equation}\label{5}
\mc{E}\(\vec{\mu}\) \
= [A\vec\mu, \vec\mu] = \sum^P_{i,j=1}a_{ij}\[\mu_i,\mu_j\];
\quad
\end{equation}
where
$$
[\mu,\nu]=\int U^\nu d\mu
$$
-- mutual energy of $\mu$ and $\nu$ and $U^\nu(z) = -\int \log|z-x| d\nu(x)$ is the logarithmic potential of the measure $\nu$.

The next two lemmas assert existence of solutions of two basic equilibrium
problems associated with Angelesco case. The first lemma is well known
\cite{Beck13},~\cite{HaKu12} (see also original papers \cite{GoRa81},
~\cite{GoRa85},~\cite{LoRa88},~\cite{GoRaSo97},
\cite{Gon03},~\cite{Lap15},~\cite{MaOrRa11}).

\begin{lemma}
\label{lem1}
For a fixed $\vec F \in \vec{\mc{F}}(\vec f)$ there exist a unique
vector measure $\vec{\lambda} = \vec{\lambda}_{\vec F}\in\vec{\mc{M}} (\vec{F})$
minimizing the energy
\begin{equation}\label{6}
  \mc{E}(\vec{\lambda})=\min_{\vec{\mu}\in\vec{\mc{M}}(\vec F)}\mc{E}(\vec{\mu}).
\end{equation}
\end{lemma}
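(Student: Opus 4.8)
The plan is to prove Lemma~\ref{lem1} by the classical direct method of the calculus of variations, exploiting the strict convexity of the energy functional $\mc E$ on $\vec{\mc M}(\vec F)$. First I would establish existence. Since each $F_j$ is compact, the set $\mc M(F_j)$ is weak-$*$ compact, and hence the product family $\vec{\mc M}(\vec F)$ is compact in the product of weak-$*$ topologies. The only subtlety is that the mutual energy $[\mu,\nu]=\int U^\nu\,d\mu$ involves the logarithmic kernel $-\log|z-x|$, which is not bounded below on all of $\CC$; however, since the $F_j$ are fixed compacts, the kernel is bounded below on $F_i\times F_j$, so each $[\mu_i,\mu_j]$ is well defined (possibly $+\infty$) and, crucially, $\mc E(\vec\mu)$ is lower semicontinuous with respect to weak-$*$ convergence. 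This is the standard principle of descent for the logarithmic kernel applied componentwise. Taking a minimizing sequence $\vec\mu^{(n)}$ with $\mc E(\vec\mu^{(n)})\to\inf\mc E$, compactness yields a weak-$*$ convergent subsequence with limit $\vec\lambda\in\vec{\mc M}(\vec F)$, and lower semicontinuity gives $\mc E(\vec\lambda)\le\liminf\mc E(\vec\mu^{(n)})=\inf\mc E$, so $\vec\lambda$ is a minimizer. One must also check the infimum is finite, which follows by exhibiting any vector measure of finite energy, e.g. equilibrium (capacity) measures on the components, each $F_j$ having positive capacity because it contains branch points and is nondegenerate.

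The harder and more interesting part is uniqueness, and this is where I expect the main obstacle. Uniqueness will follow if the quadratic form $\vec\mu\mapsto[A\vec\mu,\vec\mu]$ is strictly convex on the cone of signed measures of total mass zero with finite energy. The standard route is to show that for any signed vector measure $\vec\sigma=(\sigma_1,\dots,\sigma_s)$ with $\sigma_j(F_j)=0$ and finite energy, one has $[A\vec\sigma,\vec\sigma]\ge 0$, with equality only when $\vec\sigma=0$. The key algebraic input is positive definiteness of the interaction matrix $A$ from \eqref{4-n}: here $A=I+J$ where $J$ is the all-ones matrix, so the eigenvalues of $A$ are $s+1$ (once) and $1$ (with multiplicity $s-1$), all strictly positive. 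Combining this with the fact that the logarithmic energy $I(\sigma)=\iint -\log|z-x|\,d\sigma(z)\,d\sigma(x)$ is strictly positive for a nonzero signed measure of total mass zero (the classical energy principle for the logarithmic kernel), one diagonalizes the form: writing $[A\vec\sigma,\vec\sigma]$ in the eigenbasis of $A$ expresses it as a positive combination $\sum_k c_k\, I(\tau_k)$ of logarithmic energies of certain zero-mass combinations $\tau_k$ of the $\sigma_j$, with all $c_k>0$. Each term is nonnegative and vanishes only if the corresponding $\tau_k=0$; since $A$ is invertible, all $\tau_k=0$ forces $\vec\sigma=0$.

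Granting this convexity, uniqueness is concluded by the usual argument: if $\vec\lambda$ and $\vec\lambda'$ were two minimizers, then their difference $\vec\sigma=\vec\lambda-\vec\lambda'$ has zero-mass components and finite energy, and a short computation using the bilinearity of $\mc E$ shows
$$
\mc E\!\(\tfrac{\vec\lambda+\vec\lambda'}{2}\)=\tfrac12\mc E(\vec\lambda)+\tfrac12\mc E(\vec\lambda')-\tfrac14[A\vec\sigma,\vec\sigma].
$$
If $\vec\sigma\ne0$, strict positivity of $[A\vec\sigma,\vec\sigma]$ would make the midpoint measure have strictly smaller energy than the minimum, a contradiction. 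Hence $\vec\sigma=0$ and the minimizer is unique. The one technical point to handle carefully is justifying that the difference of two finite-energy measures again has finite energy so that the bilinear expansion and the energy principle both apply; this is routine once finiteness of $\mc E(\vec\lambda)$ is secured in the existence step, since each pairwise energy $[\lambda_i,\lambda_j]$ is then finite and the cross terms are controlled by Cauchy--Schwarz for the (positive semidefinite) logarithmic energy form on zero-mass measures.
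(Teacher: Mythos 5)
Your argument is correct, and it is essentially the standard proof: the paper does not prove Lemma~\ref{lem1} at all but declares it well known and refers to \cite{Beck13}, \cite{HaKu12} and the original papers \cite{GoRa81}, \cite{GoRa85}, where exactly this direct-method-plus-convexity argument appears. Your existence step (weak-$*$ compactness of $\mc M(F_j)$, lower semicontinuity of the energy via the kernel being bounded below on the fixed compacts, finiteness of the infimum because each component of $F_j$ is a continuum of positive capacity) and your uniqueness step (positive definiteness of $A=I+J$ plus the energy principle for zero-mass signed measures, fed into the midpoint identity) are both sound. One small simplification worth noting for this particular Angelesco matrix: since $a_{ij}=1+\delta_{ij}$, one has directly
\begin{equation*}
[A\vec\sigma,\vec\sigma]\;=\;I\Bigl(\sum_{i=1}^s\sigma_i\Bigr)+\sum_{i=1}^s I(\sigma_i),
\end{equation*}
so strict positivity for $\vec\sigma\ne 0$ with zero-mass components follows from the scalar energy principle applied to each $\sigma_i$ separately, without diagonalizing $A$; this is the form of the argument used in the cited literature, and it is equivalent to your eigenbasis computation.
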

\noindent (vector-equilibrium measure for $\vec{F}$).

We define the equilibrium energy functional  $\mc{E}[\vec{F}]$ on the class $\vec {\mc F}$
\begin{equation}\label{7}
\mc{E}[\vec{F}]\ = \ \inf_{\vec{\mu}\in\vec{\mc{M}} (\vec{F})}\mc{E} (\vec \mu) =
\mc{E}(\lambda_{\vec F}),
\qquad \vec{F} \in \vec{\mc F}
 \end{equation}
and assert existence of a maximizing vector compact set   $\vec \Gamma \in \vec{\mc{F}}$
\begin{lemma}
\label{lem2}
For a fixed $\vec f \in \mc A$ there exist vector-compact set $\vec{S} \in \vec{\mc{F}} =
 \vec{\mc{F}}(\vec f)$ such that
\begin{equation} \label{8}
\mc{E}[\vec{\Gamma}]=\max_{\vec{F}\in \vec{\mc{F}}}\mc{E}[\vec{F}].
\end{equation}
\end{lemma}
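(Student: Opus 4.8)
\emph{Proof proposal.} The plan is to prove the statement by the direct method of the calculus of variations: I will show that $\vec F\mapsto\mc E[\vec F]$ is bounded above on $\vec{\mc F}(\vec f)$, that a maximizing sequence stays in a fixed disk and, after passing to a subsequence, converges in the Hausdorff metric to an admissible compact $\vec\Gamma\in\vec{\mc F}(\vec f)$, and finally that $\mc E[\cdot]$ is upper semicontinuous along such a sequence, so that $\vec\Gamma$ realizes the maximum. For the bound, testing the inner minimum against the vector $\vec\sigma$ of classical Robin equilibrium measures of the components gives $\mc E[\vec F]=\mc E(\vec\lambda_{\vec F})\le\mc E(\vec\sigma)$; since $a_{ii}=2$ and each component $F_j$ contains at least two fixed branch points $a,b$ of $f_j$, one has $\cop(F_j)\ge\tfrac14\dist(a,b)$, so each diagonal term is bounded above, while each off-diagonal term satisfies $[\sigma_i,\sigma_j]=\int U^{\sigma_j}\,d\sigma_i\le\log(1/\cop(F_j))$ and is likewise bounded above. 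Hence $M:=\sup_{\vec F}\mc E[\vec F]<\infty$; the same estimates show $\mc E[\vec F]\to-\infty$ once the diameter of a component grows, so a maximizing sequence $\vec F_n$ may be confined to a fixed disk.

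By the Blaschke selection theorem a subsequence of $\vec F_n$ converges in the Hausdorff metric to some $\vec\Gamma$, and the next task is to check $\vec\Gamma\in\vec{\mc F}(\vec f)$. Here I rely on the fact that a Hausdorff limit of continua is again a continuum, so that the number of components does not increase under the limit; since the branch points are fixed, each limiting component still carries the at least two branch points of the approximating components, and the requirement that $f_k$ stay single valued off $\vec\Gamma$ is a monodromy condition preserved under Hausdorff convergence. A limit component that happened to divide the plane may be excluded, as enclosing a region is incompatible with being the limit of a capacity-minimizing (energy-maximizing) sequence.

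The heart of the matter, and the step I expect to be the main obstacle, is the upper semicontinuity $\limsup_n\mc E[\vec F_n]\le\mc E[\vec\Gamma]$. Applying the principle of descent to the equilibrium measures $\vec\lambda_{\vec F_n}$ produces only the opposite, useless inequality, because $\mc E[\cdot]$ is itself an infimum over measures while we are maximizing over compacts; for unrestricted compacts the needed semicontinuity is in fact false, since a sequence of finite sets can converge to a continuum of strictly larger capacity. This is precisely why the class $\vec{\mc F}$ is restricted to continua. To argue in the right direction I transport the extremal measure of the limit onto the approximants: let $\vec\lambda=\vec\lambda_{\vec\Gamma}$ be the equilibrium measure of $\vec\Gamma$ given by Lemma~\ref{lem1}, and let $\vec\nu_n$ be its push-forward onto $\vec F_n$ under nearest-point projection, which displaces mass by at most the Hausdorff distance $\varepsilon_n\to0$. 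Then $\vec\nu_n\in\vec{\mc M}(\vec F_n)$, so $\mc E[\vec F_n]\le\mc E(\vec\nu_n)$, and everything reduces to the continuity $\mc E(\vec\nu_n)\to\mc E(\vec\lambda)=\mc E[\vec\Gamma]$. Connectedness is essential exactly here: because $\vec\lambda$ is non-atomic with bounded potential, the only threat to convergence, namely two masses being pushed close enough for $\log|x-y|$ to blow up, is confined to the region where $|x-y|\lesssim\varepsilon_n$, which carries $\vec\lambda\times\vec\lambda$-measure tending to zero against the integrable logarithmic singularity. Granting this, $\limsup_n\mc E(\vec\nu_n)\le\mc E[\vec\Gamma]$, and the chain $M=\lim_n\mc E[\vec F_n]\le\limsup_n\mc E(\vec\nu_n)\le\mc E[\vec\Gamma]\le M$ yields $\mc E[\vec\Gamma]=M$, so $\vec\Gamma$ is the required maximizer.

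The genuinely delicate point is thus the continuity of the vector logarithmic energy under the projection $\vec\lambda\mapsto\vec\nu_n$, equivalently the continuity of the vector equilibrium energy on continua under the Hausdorff metric; all the remaining ingredients, namely the a priori bound, the confinement, the Blaschke compactness, and the stability of the admissibility conditions, are comparatively routine once this semicontinuity is in hand.
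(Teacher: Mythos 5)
Your overall architecture coincides with the paper's: an a priori upper bound on $\mc E[\vec F]$, confinement of a maximizing sequence to a fixed disk via the observation that a component reaching out to $|z|=R$ has capacity at least $(R-1)/4$ and hence drives $w_k$ (and with it $\mc E[\vec F]=w_1+\dots+w_s$) to $-\infty$, Blaschke compactness in the Hausdorff metric, and finally continuity of the equilibrium-energy functional to pass to the limit. The paper isolates this last step as Lemma~\ref{lem8} and proves it by \emph{balayage}: if $\vec\mu$ is the equilibrium measure of $\vec F$ and $\vec\lambda$ that of $\vec\Gamma$, sweeping each $\mu_k$ onto $\Gamma_k$ and each $\lambda_k$ onto $F_k$ produces measures admissible for the other compact set whose energies differ from the originals by at most $\varepsilon/2$ once $d_H(\vec F,\vec\Gamma)$ is small (vector versions of Lemmas 9.4--9.6 and Theorem 9.8 of \cite{Rak12}), giving the two-sided bound $|\mc E[\vec\Gamma]-\mc E[\vec F]|<\varepsilon$.

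The step you yourself flag as delicate is where your argument has a genuine gap, and the substitute you propose for balayage does not close it. Pushing $\lambda_i$ forward onto the approximating compact by a nearest-point projection $T_n$ gives $[\nu_i^n,\nu_j^n]=\iint\log\frac1{|T_nx-T_ny|}\,d\lambda_i(x)\,d\lambda_j(y)$, and on the set $\{\,|x-y|\le 3\varepsilon_n\}$ the quantity $|T_nx-T_ny|$ can be arbitrarily much smaller than $|x-y|$; it can even vanish on a set of positive $\lambda_i\times\lambda_j$ measure if $T_n$ collapses a small piece of $\Gamma_i$ to a single point, in which case $\nu_i^n$ acquires an atom and $\mc E(\vec\nu_n)=+\infty$. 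The ``integrable logarithmic singularity'' you invoke is therefore the wrong kernel: $\log\frac1{|x-y|}$ is $\lambda_i\times\lambda_j$-integrable, but it does not dominate $\log\frac1{|T_nx-T_ny|}$ near the diagonal, so no inequality of the form $\limsup_n\mc E(\vec\nu_n)\le\mc E(\vec\lambda)$ follows without an additional regularization of the transported measure. Balayage is precisely the transport that avoids this: the swept measure has finite energy automatically, and its potential is controlled from above by that of the original plus a constant tending to zero with the Hausdorff distance, which is what makes the estimate go in the needed direction. (Your treatment of admissibility of the Hausdorff limit, in particular the ``does not divide the plane'' clause, is also only sketched, but that is a secondary point which the paper likewise passes over quickly.)
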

The proof of lemma \ref{lem2} is presented in section 4.1 below; it follows essentially from results and methods developed in \cite{Rak12}.

It is well known that in general the extremal vector compact set $\vec{\Gamma}$
is not unique. More exactly, not unique is the part $\vec{\Gamma}$ which does
not belong to the support of associated equilibrium measure. This part may vary
in a domain determined by the equilibrium potential without changing equilibrium energy.

For the part of $\vec{\Gamma}$ which carry the equilibrium measure we introduce notation $\vec\Gamma^1 = (\Gamma^1_1, \dots, \Gamma^1_s)$.  Thus, components of $\vec\Gamma^1$ are defined by  $ \Gamma^1_j = \supp \lambda_j$. This part of $\Gamma$ is unique but uniqueness is not used in proofs.

The sets $ \Gamma_j\setminus \Gamma^1_j$ are less important. They do
not carry essential amounts of zeros of Hermite--Pad\'e polynomials and
they do not play essential role in proofs. However, they also require
some attention. They have to be selected in a certain way (last
inequality in \eqref{13} has to be satisfied). They also may contain
singularities of components of $\vec f$ (when we modify contours of
integrations we have to avoid singularities of $\vec f$). Finally, they
are involved in the definition of Angelesco condition on $\vec \Gamma$
which we introduce next.

In short, an important condition in our theorems on the zero
distribution of Hermite--Pad\'e polynomials is that components of the
extremal compact set $\Gamma(\vec f)$ are disjoint. More exactly,  we
introduce the following

\begin{definition} We say that vector compact set $\vec \Gamma$ satisfies the (strict) Angelesco condition if its components are disjoint
 \begin{equation} \label{9}
 \Gamma_i \cap  \Gamma_j = \varnothing, \qquad i \ne j. 
\end{equation}
for some choice of the sets $ \Gamma_j\setminus \Gamma^1_j$.  Consequently, the vector function $\vec f = (f_1, \dots, f_s) \in \vec { \mc A}$ is called an Angelesco vector function (system) if associated vector compact set $\vec \Gamma(\vec f)$ has the  Angelesco property  \eqref{9}.
\end{definition}


Now, we state the main result of the paper.

\begin{thm} \label{thm1}
 Let vector function $\vec f = (f_1, \dots, f_s) \in \vec {\mc A}$ and $f_k \in \mc A$ for $k =1, \dots, s$
 satisfies the Angelesco condition \eqref{9}.
Then sequences of zero counting measure $\mu_{n,k}$ in \eqref{Meas} are
weakly converges as $n\to \infty$
\begin{equation} \label{10}
\mu_{n, k} \overset  {*}  {\to}\lambda_k, \quad k = 1,2, \dots, s
\end{equation}
to the components $\lambda_k$ of the vector equilibrium measure $\vec \lambda = \vec \lambda_{\vec {\Gamma}}$
of the extremal vector compact set $\vec \Gamma$ in class $ \vec{\mc{F}}(\vec f)$ defined by  \eqref{6}.
\end{thm}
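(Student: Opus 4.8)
The plan is to follow the Gonchar--Rakhmanov--Stahl (GRS) scheme, adapted to the coupled, non-Hermitian orthogonality satisfied by the first kind polynomials. First I would convert the defining relation \eqref{2} into complex orthogonality relations. Fix any admissible cut $\vec F=(F_1,\dots,F_s)\in\vec{\mc F}(\vec f)$ rendering each $f_k$ single valued off $F_k$, and integrate $z^m R_n(z)$ over a large circle; since $R_n=O(z^{-ns-s})$ the integral vanishes for $0\le m\le ns+s-2$. Collapsing the contour onto the cuts (the entire summand $q_{n,0}$ drops out) gives
$$
\sum_{k=1}^{s}\frac{1}{2\pi i}\int_{F_k} z^m\, q_{n,k}(z)\,(f_{k+}-f_{k-})(z)\,dz=0,\qquad 0\le m\le ns+s-2,
$$
a system that holds simultaneously for every admissible $\vec F$, since the contours may be deformed freely as long as singularities of $\vec f$ are not crossed. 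I would then show that the zeros of every $q_{n,k}$ stay in a fixed bounded set, using analyticity of the $f_k$ at infinity together with a normalization of the $q_{n,k}$; this makes the family $\{\mu_{n,k}\}$ tight and weak-$*$ precompact.

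Next I would invoke Lemma \ref{lem2} to produce the extremal cut $\vec\Gamma$ maximizing $\mc E[\,\cdot\,]$ and extract from its maximality the Stahl symmetry ($S$-) property of the active part $\vec\Gamma^1=\supp\vec\lambda$. Differentiating $\mc E[\vec F]$ along admissible deformations of the contours and using that $\vec\Gamma$ is a critical point yields, for each $k$,
$$
\frac{\partial W_k}{\partial n_+}=\frac{\partial W_k}{\partial n_-}\quad\text{on }\Gamma^1_k,\qquad W_k:=2U^{\lambda_k}+\sum_{j\ne k}U^{\lambda_j},
$$
so the combined equilibrium potential has balanced normal derivatives across each active component. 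This is the geometric condition that pins the limiting support, and it is precisely the point at which the \emph{maximization} over cuts (rather than the single equilibrium problem of Lemma \ref{lem1}) enters: because the orthogonality above is valid on every admissible $\vec F$, the asymptotic zero distribution is free to select the energetically optimal cut, which the variational analysis identifies with the $S$-curve $\vec\Gamma^1$.

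To identify the limit I would take a weakly convergent subsequence $\mu_{n,k}\to\mu_k$ and prove that $\vec\mu=(\mu_1,\dots,\mu_s)$ solves the equilibrium problem \eqref{6} for $\vec F=\vec\Gamma$, hence equals $\vec\lambda_{\vec\Gamma}$ by the uniqueness in Lemma \ref{lem1}. Two ingredients are needed: an \emph{upper bound} on $|q_{n,k}|$ coming from the orthogonality (the smallness of $R_n$ on the cut forces $\sum_j a_{kj}U^{\mu_j}\le w_k$ on $\supp\mu_k$ for suitable constants $w_k$), and a matching \emph{lower bound} from the $n$-th root asymptotics of $R_n$ via the principle of descent and the lower-envelope theorem, giving $\sum_j a_{kj}U^{\mu_j}\ge w_k$ quasi-everywhere. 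The $S$-property from the previous step is what converts this one-sided boundary information into equality on the support, ruling out drift of zeros off $\vec\Gamma^1$; together these are exactly Frostman's conditions characterizing $\vec\lambda_{\vec\Gamma}$, and uniqueness upgrades subsequential convergence to \eqref{10}.

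The principal obstacle will be the combination of vectorial coupling with non-Hermiticity: the orthogonality is complex and couples all components, so classical positivity and interlacing arguments are unavailable and one cannot treat each $q_{n,k}$ in isolation. The crux is to show that no fixed positive proportion of zeros escapes $\vec\Gamma^1$ and that cancellation in $R_n$ does not destroy the potential-theoretic estimates; this is exactly where Stahl's symmetry property and the max-min extremality of $\vec\Gamma$ are indispensable rather than the equilibrium problem for a single fixed cut. The remaining technical points are controlling the inactive parts $\Gamma_k\setminus\Gamma^1_k$ (across which contours must be routed and where components of $\vec f$ may carry singularities) and verifying the contour-deformation hypotheses under the Angelesco disjointness condition \eqref{9}.
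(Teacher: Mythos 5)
Your setup (collapsing the contour to get the coupled complex orthogonality relations, producing $\vec\Gamma$ from Lemma \ref{lem2}, and extracting the $S$-property by a variational argument) matches the paper. The gap is in the identification step. You propose to verify Frostman's conditions for a weak limit $\vec\mu$ directly, via an ``upper bound from orthogonality'' giving $\sum_j a_{kj}U^{\mu_j}\le w_k$ on $\supp\mu_k$ and a ``lower bound from the principle of descent.'' Neither half survives non-Hermiticity. Complex orthogonality carries no $L^2$-extremal property and no positivity, so the smallness of $R_n$ does not force any one-sided potential inequality on $\supp\mu_k$; and the principle of descent / lower envelope theorem only controls potentials of the counting measures from one side --- it gives the upper bound \eqref{47} for the orthogonality integrals but never a lower bound, because cancellation in $\oint Q_nP_n\Phi_n f\,dz$ can make the integral far smaller than the pointwise estimates suggest. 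You correctly name this as ``the crux'' but then leave it unresolved, and the $S$-property by itself does not ``convert one-sided boundary information into equality'': it is an ingredient in a quite different argument.

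What the paper actually does is a proof by contradiction built on quantitative test-polynomial constructions (Lemmas \ref{3}--\ref{7}, modifications of Lemmas 8 and 9 of \cite{GoRa87}). Assuming the conclusion fails, one builds $G_n=P_n\Phi_n$ with $\deg G_n\le ns+s-2$ so that exactly one integral $I_{n,1}$ in $\sum_k I_{n,k}=0$ dominates in $n$-th root magnitude. The lower bound $|I_{n,1}|^{1/n}\to e^{-m_1}$ is obtained by forcing $Q_nP_n$ to have only even-order zeros near a regular point $z_0$ of $\Gamma^0_1$ (via the $*$-reflection of Lemma \ref{3}) where $U^{\mu+\sigma}+2\varphi$ has a strict minimum and the jump $f^+-f^-$ does not vanish; this kills the cancellation. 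Two further devices are entirely missing from your plan: (a) a preliminary step proving that the constants $m_k=u_k+\min_{\Gamma_k}U^{\mu_k+\lambda}$, which track the leading coefficients $c_{n,k}$ (these are determined by the orthogonality for first kind polynomials and may a priori degenerate to $u_k=+\infty$), are all equal --- the degenerate case $\mu_k=\lambda_k$ there is handled by perturbing the total masses $\vec t=(1+t,1-t)$ of the vector equilibrium problem; and (b) the final contradiction when $\mu_1\ne\lambda_1$ uses $\sigma=(1-t)\lambda_1+t\eta$ with $\eta$ the balayage of $\lambda_1$ onto the other components, so that $U^{\eta-\lambda_1}$ is constant on $\Gamma_2$ but strictly smaller on $\Gamma_1$, producing $m_1(t)<m_2(t)$. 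Without these constructions your argument does not close.
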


It is important to add that the therem 1 is presented not in the most general form.

We choose to define the vector compact $\vec \Gamma \in \vec {\mc F}(\vec f)$ (which is the key component of the theorem) as the solution of the ``max-min'' energy problem \eqref{8} with strict Angelesco condition \eqref{9}. With this definition it is possible to prove the existence theorem
by comparetively simple reduction to known results.

Actually,  theorem 1 is valid under the assumption that there exists vector compact $\vec \Gamma \in \vec {\mc F}(\vec f)$ with $S$-property (see definition in \eqref{12} - \eqref{14} below) and condition \eqref{9} (which may be relaxed: $ \Gamma_i \cap  \Gamma_j $ is a finite set for $i \ne j$).

Numerical experiment conducted by S. Suetin shows that the existence of such vector compact
 is a more general condition then the one stated in terms of \eqref{8}. More exactly, the class of vector function $\vec f$ such that a vector compact $\vec \Gamma \in \vec{ \mc F}(\vec f)$ with $S$-property exist is essentially larger the class in theorem \ref{thm1}.
It is more  difficult to prove existence theorem for such $S$-compacts. For a moments  the ``max-min'' energy problem \eqref{8} seems to be the only known method, but some of them do not satisfy this property.

\subsection{ Remarks. Generalizations.}
\subsubsection{Asymptotics of leading coefficient.}
An important fact about first kind Hermite--Pad\'e polynomials  is that
their leading coefficients are essentially involved in the definition
of polynomials \eqref{2}. This relation determines leading coefficients
together with other coefficients of polynomials. It turns out that
asymptotics of their magnitudes may be determined in terms of the
vector equilibrium problem \eqref{6} - \eqref{8}.

Conditions \eqref{2} determine the function $R_n$ up to a
multiplicative normalizing constant. It follows that the leading
coefficient $c_{n,k}$ of one of the polynomials $q_{n,k}(x)
=c_{n,k}Q_{n,k}(x)$ may be selected arbitrarily. Together with theorem
\ref{thm1} we have the following.

\begin{thm} \label{thm2}
If normalizing constant in \eqref{2} is selected so that the sequence
$|c_{n,k}|^{1/n}$ is convergent for one particular fixed $k$ as $n\to
\infty$ then it is convergent for any $k$ and there is a constant $c>0$
depending on the normalization such that
\begin{equation*} 
 \lim_{n\to \infty}|c_{n,k}|^{1/n} = c  e^{w_k}\ , \qquad    k = 1,2, \dots, s
\end{equation*}
where $w_k$ is the $k$-th equilibrium constant associated with extremal compact set $\vec\Gamma ({\vec f});$ see \eqref{13} below.
\end{thm}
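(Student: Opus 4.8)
The plan is to obtain Theorem \ref{thm2} as a corollary of the $n$-th root asymptotics already produced in the proof of Theorem \ref{thm1}, by reading off the leading coefficients from a matching relation on the supports $\Gamma^1_k$. Write $q_{n,k}=c_{n,k}Q_{n,k}$ with $Q_{n,k}$ monic; by Theorem \ref{thm1} the degree of $q_{n,k}$ is $n(1+o(1))$, so $c_{n,k}$ is well defined for large $n$. First I would record two asymptotic ingredients. From $\mu_{n,k}\to\lambda_k$ one has, uniformly on compact subsets of $\CC\setminus\Gamma^1_k$,
$$\frac1n\log|Q_{n,k}(z)|=\int\log|z-x|\,d\mu_{n,k}(x)\longrightarrow -U^{\lambda_k}(z),$$
hence $\frac1n\log|q_{n,k}(z)|=\frac1n\log|c_{n,k}|-U^{\lambda_k}(z)+o(1)$. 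The second ingredient, furnished by the same analysis that proves Theorem \ref{thm1}, is the $n$-th root asymptotics of the remainder: writing $R_n(z)\sim A_n z^{-(ns+s)}$ at infinity with $A_n$ the leading Laurent coefficient and $a_n':=\frac1n\log|A_n|$, one has on the (connected) domain $\overline\CC\setminus\bigcup_j\Gamma_j$
$$\frac1n\log|R_n(z)|=\sum_{j=1}^s U^{\lambda_j}(z)+a_n'+o(1),$$
the harmonic limit being pinned down by its behaviour $\sim a_n'-s\log|z|$ at infinity and by the $S$-property of $\vec\Gamma$.

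Next I would exploit the jump of $R_n$ across a single cut. By the Angelesco condition \eqref{9} the sets $\Gamma_k$ are disjoint, so $R_n-q_{n,k}f_k$ is analytic across $\Gamma_k$ and $R_n^+(t)-R_n^-(t)=q_{n,k}(t)\bigl(f_k^+(t)-f_k^-(t)\bigr)$ for $t\in\Gamma_k$. Since $\frac1n\log|f_k^+-f_k^-|\to 0$, the first ingredient gives $\frac1n\log|R_n^+-R_n^-|=\frac1n\log|c_{n,k}|-U^{\lambda_k}(t)+o(1)$ on $\Gamma_k$. On the other hand the boundary values of $R_n$ from both sides have $n$-th root $\sum_jU^{\lambda_j}(t)+a_n'$, and on the support $\Gamma^1_k=\supp\lambda_k$ the $S$-property guarantees that the jump attains this order (no cancellation), so $\frac1n\log|R_n^+-R_n^-|=\sum_jU^{\lambda_j}(t)+a_n'+o(1)$ there. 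Equating the two expressions on $\Gamma^1_k$ gives
$$\frac1n\log|c_{n,k}|=U^{\lambda_k}(t)+\sum_{j=1}^s U^{\lambda_j}(t)+a_n'+o(1)=\Bigl(2U^{\lambda_k}+\sum_{j\neq k}U^{\lambda_j}\Bigr)(t)+a_n'+o(1).$$
By the Angelesco equilibrium conditions determined by the matrix $A$ of \eqref{4-n}, the combined potential in parentheses equals the $k$-th equilibrium constant $w_k$ on $\Gamma^1_k$; hence $\frac1n\log|c_{n,k}|=w_k+a_n'+o(1)$ for $k=1,\dots,s$.

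The conclusion is then immediate and, crucially, does not require $a_n'$ itself to converge: subtracting the relation for two indices eliminates $a_n'$ and yields $\frac1n\log|c_{n,k}|-\frac1n\log|c_{n,j}|=w_k-w_j+o(1)$. Thus if $|c_{n,k_0}|^{1/n}$ converges for one index $k_0$, then for every $k$ the sequence $\frac1n\log|c_{n,k}|=\frac1n\log|c_{n,k_0}|+(w_k-w_{k_0})+o(1)$ converges as well, and setting $c:=\bigl(\lim_n|c_{n,k_0}|^{1/n}\bigr)e^{-w_{k_0}}>0$ we obtain $\lim_n|c_{n,k}|^{1/n}=c\,e^{w_k}$ for all $k$, with the dependence on the normalization carried by the factor $\lim_n|c_{n,k_0}|^{1/n}$. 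The main obstacle lies entirely in the two inputs, not in this extraction: one needs the $n$-th root asymptotics of $R_n$ together with the fact that on $\Gamma^1_k$ the jump $R_n^+-R_n^-$ is of the same exponential order as the boundary values. This is precisely what the Stahl--Gonchar--Rakhmanov machinery underlying Theorem \ref{thm1} secures, so that Theorem \ref{thm2} follows from that analysis without a new hard estimate.
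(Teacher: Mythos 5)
Your final extraction is correct and lands exactly on the paper's formula: with $|c_{n,k}|^{1/n}\to e^{-u_k}$ along a subsequence, Theorem \ref{thm2} amounts to the statement that $u_k+w_k$ does not depend on $k$, and subtracting two such relations eliminates the common unknown constant. The problem is with the two analytic inputs you feed into this extraction. The first, the $n$-th root asymptotics of the remainder on $\overline\CC\setminus\Gamma$, is Theorem \ref{thm3}; it is not available as a prior ingredient when proving Theorem \ref{thm2}. The analysis of Section 3 never produces pointwise or boundary asymptotics of $R_n$ --- it works exclusively with the integrals $I_{n,k}$ of \eqref{54} --- and obtaining the size of $R_n$ from the sizes of the individual terms $q_{n,k}f_k$ is itself delicate, since $R_n$ vanishes to order $ns+s$ at infinity and is exponentially smaller than each summand; the natural proof of Theorem \ref{thm3} already uses the normalization information that Theorem \ref{thm2} provides, so your route is circular at this point. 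The second and more serious gap is the assertion that on $\Gamma^1_k$ ``the $S$-property guarantees that the jump attains this order (no cancellation).'' That is an unproved lower bound for $|R_n^+-R_n^-|=|q_{n,k}|\,|f_k^+-f_k^-|$ on $\supp\lambda_k$, i.e.\ a lower bound for $|q_{n,k}|$ there of the same exponential order as $|R_n^{\pm}|$. The $S$-property \eqref{14} is a statement about the equilibrium potential, not about boundary values of $R_n$, and it does not by itself exclude near-cancellation of the two boundary values on most of $\Gamma^1_k$. This lower bound is essentially equivalent to what has to be proved; it is exactly the point where the hard work of the theory is done.

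For comparison, the paper obtains Theorem \ref{thm2} with no argument beyond the proof of Theorem \ref{thm1}: along a subsequence satisfying \eqref{52} one forms the constants $m_k=u_k+\min_{\Gamma_k}U^{\mu_k+\lambda}$ of \eqref{53}; testing the orthogonality relation $\sum_k I_{n,k}=0$ in \eqref{54} against the polynomials $G_n=P_n\Phi_n$ supplied by Lemmas \ref{lem6} and \ref{7} forces $m_1=\dots=m_s$, and once $\mu_k=\lambda_k$ is established this equality becomes $u_k+w_k=\const$, which is Theorem \ref{thm2}. Your jump-matching picture is a good heuristic explanation of why the answer is $c\,e^{w_k}$, but to turn it into a proof you would need independent proofs of Theorem \ref{thm3} and of the no-cancellation bound, which is harder than the route the paper actually takes.
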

Proofs of theorems \ref{thm1} and \ref{thm2} are connected and will be
presented simultaneously.

 \subsubsection{Asymptotics of the remainder.}
An important complement to the theorems \ref{thm1} and \ref{thm2} is the $n$-th root asymptotics for the remainder $R_n$.
\begin{thm} \label{thm3}
With a properly selected  normalizing constant in \eqref{2} we have
\begin{equation*} 
\frac {1}{n} \log |R_n(z) | \ \overset{\cop}{\to} \  - U^\lambda (z), \quad z \in \ol\CC \setminus \Gamma
\end{equation*}
where $\lambda = \sum_{k=1}^s \lambda_k$ and $\Gamma = \cup_{k=1}^s \Gamma_k$ (convergence in capacity on compact sets in $ \ol\CC \setminus \Gamma$).
\end{thm}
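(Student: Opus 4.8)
The plan is to reduce the asymptotics of $R_n$ to a single Cauchy-type integral and then estimate that integral with the same potential-theoretic machinery that underlies Theorems \ref{thm1} and \ref{thm2}. Each $f_k$ is holomorphic in $\ol\CC\setminus\Gamma_k$ and the $q_{n,k}$ are polynomials, so $R_n$ is holomorphic in $\ol\CC\setminus\Gamma$ and vanishes to order $ns+s$ at infinity. Collapsing the Cauchy integral of $R_n$ onto $\Gamma$ and using the Angelesco disjointness \eqref{9} (only $f_k$ jumps across $\Gamma_k$) gives the representation $R_n(z)=\int_\Gamma \frac{d\nu_n(t)}{z-t}$, where $d\nu_n=\sum_{k=1}^s q_{n,k}\,d\sigma_k$ and $d\sigma_k=\frac1{2\pi i}(f_k^+-f_k^-)\,dt$ is the jump of $f_k$ across $\Gamma_k$ (the polynomial $q_{n,0}$ carries no jump and drops out). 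The high-order vanishing at infinity is exactly the system of complex orthogonality relations $\int t^m\,d\nu_n(t)=0$, $m=0,\dots,ns+s-2$, so $R_n$ is the second-kind function attached to these relations. This places the problem in the framework announced in the abstract, and all later estimates rest on this representation; note that the naive route of reading $R_n=q_{n,0}+\sum q_{n,k}f_k$ as a sum fails, because the orthogonality forces massive cancellation that the integral representation alone controls.

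First I would record the weak asymptotics already supplied by Theorems \ref{thm1} and \ref{thm2}: on compact subsets of $\ol\CC\setminus\Gamma_k$ one has $\tfrac1n\log|q_{n,k}(z)|\to -U^{\lambda_k}(z)+w_k+\log c$. The equilibrium conditions for the matrix \eqref{4-n}, namely $2U^{\lambda_k}+\sum_{j\ne k}U^{\lambda_j}=U^{\lambda_k}+U^{\lambda}=w_k$ on $\Gamma^1_k$, then collapse this rate on each support to the single expression governed by the total potential: $\tfrac1n\log|q_{n,k}|^{}\to \log c+U^\lambda$ on $\Gamma^1_k$. This is precisely the mechanism that lets the separate components of $\vec\lambda$ recombine into $\lambda=\sum_k\lambda_k$ in the final answer.

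The core is a matched pair of bounds for $\tfrac1n\log|R_n|$. For the upper bound I would deform $\Gamma$ in the integral representation off $\supp\vec\lambda$, using holomorphy of each $q_{n,k}f_k$ away from the branch points; the $S$-property of $\vec\Gamma$ (equality of the normal derivatives of the equilibrium potential across $\Gamma^1_k$, i.e. that $\Gamma^1_k$ is a trajectory of the associated quadratic differential) guarantees that the deformation does not raise the exponential order, so the relevant saddle sits on $\Gamma$ and the rate is the equilibrium potential. The lower bound is what I expect to be the main obstacle: the sign changes of $q_{n,k}$ on $\Gamma_k$ create cancellation in the integral, and one must show this cancellation is no worse than predicted. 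Here I would invoke the maximality of $\mc E[\vec\Gamma]$ from Lemma \ref{lem2} together with the $S$-property by a variational comparison of the type used to prove that lemma: were $|R_n|$ to decay strictly faster than the equilibrium rate on a set of positive capacity, one could balayage mass to produce an admissible competitor $\vec F\in\vec{\mc F}(\vec f)$ with strictly larger energy, contradicting \eqref{8}.

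Finally I would assemble the two bounds to identify $\lim \tfrac1n\log|R_n|$ with the harmonic function determined off $\Gamma$ by $\lambda=\sum_k\lambda_k$; the additive constant is removed by the freedom in the normalization of \eqref{2} (this is the ``properly selected normalizing constant''), giving $\tfrac1n\log|R_n|\overset{\cop}{\to} -U^\lambda$ on compacta of $\ol\CC\setminus\Gamma$. Convergence is only in capacity, not uniform, because $R_n$ may carry zeros off $\Gamma$ near which $\log|R_n|=-\infty$; these zeros form sets of small capacity, and bounding their number is part of the lower-bound step. The two remaining delicate points are the admissibility of the contour deformation near singularities of $\vec f$ lying in $\Gamma_k\setminus\Gamma^1_k$ (handled by the freedom in choosing those sets, as stressed after Lemma \ref{lem2}) and the sharpness of the lower bound, for which the extremal characterization \eqref{8} of $\vec\Gamma$ is indispensable.
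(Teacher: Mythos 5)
Your setup---the Cauchy-type representation of $R_n$ collapsed onto $\Gamma$, the orthogonality relations \eqref{11}, and the use of Theorems \ref{thm1}--\ref{thm2} together with the equilibrium identity $U^{\lambda_k}+U^{\lambda}=w_k$ on $\Gamma^1_k$---is the right frame, and it matches what the paper intends (the paper gives no self-contained proof of Theorem \ref{thm3}: it is meant to follow from the Hermite--Pad\'e analogue of part (ii) of the $\GRS$-theorem \ref{thm3-1} together with the integral asymptotics $|I_{n,k}|^{1/n}\to e^{-m_k}$, $m_1=\dots=m_s$, established in Section 3; even the Riemann-surface version, Theorem \ref{thmAsy-2}, is explicitly left unproved). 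The decisive device, absent from your proposal, is division by a polynomial $q\in\P_{ns+s-2}$ with $\frac1n\mc X(q)\to\lambda$: since $\deg q\le ns+s-2$ and $R_n=O(z^{-ns-s})$, Cauchy's formula in the exterior domain gives $q(z)R_n(z)=-\frac1{2\pi i}\sum_k\oint_{\Gamma_k}q(t)q_{n,k}(t)f_k(t)(t-z)^{-1}\,dt$, i.e.\ exactly a sum of integrals $I_{n,k}(G_n)$ as in \eqref{54} with $G_n=q(\cdot)/(\cdot-z)$, after which $|R_n(z)|^{1/n}$ is read off as $e^{-m}/|q(z)|^{1/n}$. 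Without this step the orthogonality you derive is never actually used, and no deformation of the contour in $\int_\Gamma d\nu_n(t)/(z-t)$ can produce the required decay at infinity, because the jump measure $\nu_n$ cannot be moved off $\Gamma$; your ``saddle on $\Gamma$'' upper bound therefore has no mechanism behind it.

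The genuine gap is the lower bound. The proposed argument---if $|R_n|$ decayed faster than the equilibrium rate on a set of positive capacity, a balayage would produce a competitor $\vec F\in\vec{\mc F}(\vec f)$ with larger energy, contradicting \eqref{8}---is not a proof: the extremality \eqref{8} is a statement about the equilibrium problem alone, and the entire content of Theorem \ref{thm3} is the link between that problem and the size of $R_n$, so you are using the conclusion as the engine. What actually controls the cancellation in $\sum_k I_{n,k}$ is the test-polynomial construction of Lemmas \ref{3}--\ref{7}: the $S$-property enters through the local reflection $z\mapsto z^*$, which is used to make all zeros of $Q_nP_n$ on the arc of even multiplicity so that the integrand does not oscillate near the minimum point $z_0$ of the total potential; this is the part-(ii) machinery of the $\GRS$-theorem, adapted as in the proof of Theorem \ref{thm1} so that one $\Gamma_k$ dominates. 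Two further points: the limit $\frac1n\log|q_{n,k}|\to\log c+U^\lambda$ ``on $\Gamma^1_k$'' holds only quasi-everywhere (the zeros of $q_{n,k}$ accumulate there), so it cannot be inserted pointwise under the integral---this is precisely what Lemma \ref{4} handles; and your own boundary computation yields $+U^\lambda$, which is the sign forced by $R_n=O(z^{-ns-s})$ at infinity and by \eqref{G-3}, so the $-U^\lambda$ in the statement should have been flagged as a sign discrepancy rather than reproduced.
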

There are indications coming from semiclassical classes of $\vec f$
that this theorem may be substantially generalized in the following
direction. Function $ - U^\lambda(z)$ which is harmonic in $\Omega =
\ol\CC \setminus \Gamma$ has a harmonic extension $g(z)$ from the domain
$\Omega $ in plane to $s+1$-sheeted algebraic Riemann surface $\mc R =
\mc R(\vec f)$.

We will go into some details related to  $\mc R $ in section 4; here we mention that on the other $s$ domains $\Omega^{(k)} \subset \mc R$ whose projection to the plane is $\Omega$ the extended function is $g(z) = U^{\lambda_k}(z) + C$ where $C$ is a common constant. Function $R(z)$ has multivalued analytic continuation to $\mc R$. We conjecture that for any domain $\mc D \subset \mc R$
where holomorphic branch of $R_n$ exist (for any $n$) we have
\begin{equation*} 
\frac {1}{n} \log |R_n(z) | \ \overset{\cop}{\to} \  g (z), \quad z \in \mc D
\end{equation*}
The conjecture is supported by the results from \cite{MaRaSu16} related
to function $\vec f$ from the class $f_k \in \mathscr L$ of the form $f_k(z)
= \prod_{i = 1}^{m_k} (z - a_{i,k})^{ \alpha_{i,k}}$ with $\sum_{i =
1}^{m_k}\alpha_{i,k} = 0$ (see also~\cite{KoSu15} and \cite{Sue15}). For function of this class the weighted
Hermite--Pad\'e polynomials $q_{n,k}(z)f_k(z)$ and the remainder
satisfy the same differential equation with polynomial coefficient of
order $s+1$ (polynomials - coefficient depend on $n$ but their degrees
are bounded by constants not depending on $n$). Combining the this fact
with the results of our paper we can prove the conjecture for the case
when $\vec f \in \mathscr L$ and the Angelesco condition is satisfied.

\subsubsection{Angelesco condition.}
Angelesco condition \eqref{8} - \eqref{9} is crucial for theorem \ref{thm1};
if it is essentially violated (say, two components of $\vec \Gamma^1$ have a
common arc) then
then the zero distribution of Hermite--Pad\'e  polynomilas is actually determined by
a different $S$-compact. At the same time, in such a case we do not have general tools
for proving existentence. So, we use ``max-min'' property to define Angelesco condition.
Next, we make a few short remarks on Angelesco condition meaning specifically
conditions eqref{8} - \eqref{9}of theorem \ref{thm1}.


It would be natural to suggest that the Angelesco condition depends
only on the mutual location of branch sets $e_k = e(f_k)$ of (actual)
singular points of components functions $f_k$ but in general it is not
true.  it is, probably, true in a situation of ``common position''.
However, some sufficient conditions may be stated in terms of vector
set $\vec e = (e_1, \dots, e_s)$.

In terms of branch sets, to infer that condition \eqref{9} holds it is enough to assume that component sets $e_j$ are in certain sense well separated. In more precise terms, let $\hat e_k$ be a convex hull of $e_k$. It is possible to prove that \eqref{8} holds if distances between sets $\hat e_k$ are large enough compare to sizes of those sets. However, it would not be an easy task to give constructive estimates for what is ``enough''.

It is known that it is not enough to assume that sets $\hat e_k$ are
disjoint. It follows form the results in \cite{ApKuVa07} where the case $s
=2$ is considered with the assumption that each set $e_1, e_2$ consists of
two points  (and some other assumptions; we will mention more details on
the case in section 4). At the same time the case of Markov type functions
show that the Angelesco systems may have overlapping sets $\hat e_k$.
Anyway, characterization of Angelesco condition in geometric terms is an
interesting and difficult problem which is not the main concern of this
paper.

\subsubsection{Second kind Hermite--Pad\'e polynomials.}
Methods of this paper (with some modifications) can be  used to study the zero distribution theorem for second kind Hermite--Pad\'e polynomials under the same Angelesco condition. We introduce definitions in section 2.1 below where Markov type functions are considered. However, theorems similar to theorems \ref{thm1} - \ref{thm3} above can be proved only under some stronger assumption on separation of branch sets $e_k;$  distances between sets $\hat e_k$ have to be large enough compare to sizes of those sets. We will state the theorem and outline their proofs in the end on section 4.

\subsubsection{More general assumptions on degrees of polynomials.}
Polynomials $q_{n, k}$  in \eqref{2} above were subject to the condition  $q_{n,k }\in \P_n, \  k=0,1,\dots, s$. This condition may be easily generalized. Consider an arbitrary sequence of vectors $\vec d_n = (d_{n, 1}, \dots, d_{n, s})$ with natural components. For any $n$ there exists a sequence of polynomials  $q_{n,k }, \  k=0,1,\dots, s$ such that $\deg  q_{n, k} \leq d_{n, k}$
and \eqref{2} is valid with  $O(1/z^{ns +s})$  replaced by $O(1/z^N)$ where $N = d_{n, 1} + \dots +\deg  d_{n, s} +s$.

 Zero distribution of such polynomials is described by a vector equilibrium problems, which is a generalization of the problem in lemmas \ref{1} and \ref{2} for the case when total masses of components are arbitrary positive numbers (prescribed in advance). Definitions are modified as follows.

 Let $t>0$ and $\mc M^{ t}(F)$ be the set of all positive Borel measures $\mu$ on the compact set $F$ with total mass $|\mu| = \mu(F) =t$. For a fixed vector-compact set $\vec{F}=\(F_1,\dotsc,F_p\)\in\mc{F}$
and a vector $\vec {t} =  (t_1,\dots, t_s)$ with positive components (total masses on components of vector-compact) we define a family of vector-measures
$$
{\vec {\mc M}}^{\vec t}={\vec {\mc M}}^{\vec t}(\vec{F}) =
\left\{\vec{\mu}
=(\mu_1,\dotsc,\mu_p):\mu_j\in\mc {M}^{t_j}\(F_j\)\right\}.
$$
Associated modifications of equilibrium problems are straightforward;
class of measures is modified, but the energy functional \eqref{4} -
\eqref{6} remains the same.

The analogue of lemma \ref{1} is valid: there exists a unique vector measure
${\vec\lambda}_{\vec \Gamma}({\vec t})$ in class ${\vec {\mc M}}^{\vec t}
 (\vec{F})$ minimizing the vector energy \eqref{6}.


In the definition \eqref{7} of the equilibrium energy functional $\mc
E[\vec F]$ class ${\vec {\mc M}}(\vec{F})$ has to be replaced by ${\vec
{\mc M}}^{\vec t}(\vec{F})$. Then the analogue of lemma \ref{2}) is
valid: there exists a vector compact set $\vec \Gamma \in
\vec{\mc{F}}$ in the class $ \vec{\mc{F}}(\vec f)$ which maximize
equilibrium energy  $\mc E( {\vec\lambda}_{\vec \Gamma}({\vec t}) )$
over this class (this compact set will now depend on $\vec t$).

Now, we have the following cumulative theorem.

 \begin{thm} \label{thm4}
 Let $\vec d_n =(d_{n,1}, \dots, d_{n,s} )$ be a given sequence of vectors with natural components and $q_{n, k}$ be the associated sequence of Hermite Pad\'e polynomials \eqref{2} ($\deg q_{n, k} \leq d_{n, k})$. If the following condition is satisfied
 \begin{equation*}
 \frac 1n\  d_{n,k}\  \to \ t_k >0, \qquad \text{as}\quad n \to \infty
\end{equation*}
 then the assertions of theorems \ref{thm1} (see \eqref{10}) and theorem \ref{thm3} are valid with $\lambda_k$ representing components of the vector equilibrium measure  ${\vec\lambda}_{\vec \Gamma}({\vec t})$.
Moreover, assertion of theorem \ref{thm2} is valid with $w_k$ standing for equilibrium constants (see \eqref{13} below) associated with the vector equilibrium measure  ${\vec\lambda}_{\vec \Gamma}({\vec t})$.
\end{thm}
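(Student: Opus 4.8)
The plan is to develop the argument in parallel with the proof of Theorems~\ref{thm1}--\ref{thm3}, replacing at every step the unit-mass equilibrium problem by the prescribed-mass problem on the class ${\vec {\mc M}}^{\vec t}$ and tracking the total masses $t_k$ in place of the common value $1$. Two ingredients make this transfer possible, and both are already available: the stated analogues of Lemmas~\ref{lem1} and~\ref{lem2} furnish a unique vector equilibrium measure ${\vec\lambda}_{\vec \Gamma}({\vec t})$ together with an extremal (now $\vec t$-dependent) compact set $\vec \Gamma = \vec \Gamma(\vec t)$ carrying it, while the energy functional~\eqref{5} is unchanged, so that the $S$-property and the Angelesco condition~\eqref{9} are formulated and verified exactly as before.

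First I would record the complex orthogonality relations satisfied by the $q_{n,k}$. From the definition~\eqref{2}, now with $\deg q_{n,k}\le d_{n,k}$ and vanishing order $N=\sum_k d_{n,k}+s$ at infinity, the remainder $R_n$ is orthogonal to a space of test functions whose dimension grows linearly in $n$; the essential point is that the number of orthogonality conditions imposed on $q_{n,k}$ is asymptotically $d_{n,k}\sim t_k n$. This is the only place where the vector $\vec t$ enters the analysis directly, and it dictates that the limiting zero-counting measure of $q_{n,k}$ must carry total mass $t_k$.

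Next comes the potential-theoretic core, following Stahl~\cite{Sta85a} and Gonchar--Rakhmanov~\cite{GoRa87},~\cite{Rak12}. I would pass to a weak-$*$ convergent subsequence of each $\mu_{n,k}$ with limit $\mu_k$; the confinement of its support to $\Gamma_k$ and the identification of the limiting potential rest on the $S$-property of $\vec\Gamma(\vec t)$ together with the orthogonality relations and the growth of $|R_n|$ off $\Gamma$, exactly as in the cited works. The limit vector measure $\vec\mu$ then satisfies the variational conditions of the $\vec t$-problem --- on each $\supp\mu_k$ the combination $\sum_j a_{kj}U^{\mu_j}$ equals the equilibrium constant $w_k$ and is no smaller on the rest of $\Gamma_k$ --- so by the uniqueness asserted in the analogue of Lemma~\ref{lem1} we obtain $\vec\mu={\vec\lambda}_{\vec\Gamma}(\vec t)$, and since the subsequence was arbitrary this yields the full convergence~\eqref{10}. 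The analogue of Theorem~\ref{thm3} follows from the same estimates applied to $\tfrac1n\log|R_n|$ with $\lambda=\sum_k\lambda_k$, and the analogue of Theorem~\ref{thm2} by reading the constant in the behaviour of $R_n$ at infinity off the equilibrium constants $w_k$ of the $\vec t$-problem.

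The step I expect to be the main obstacle is the control of the total mass of each $\mu_{n,k}$. The bound $|\mu_{n,k}|=\deg q_{n,k}/n\le d_{n,k}/n\to t_k$ is automatic, and the confinement argument keeps the zeros bounded so that no mass escapes to infinity; the delicate point is the matching lower bound $\deg q_{n,k}=(1+o(1))t_k n$, ensuring the full mass $t_k$ is attained in the limit. In the balanced case $d_{n,k}=n$ this rests on the normality of Angelesco systems, whereas for arbitrary ratios $\vec t$ I would argue it directly by feeding the lower potential bound for $|R_n|$ back into the interpolation conditions~\eqref{2}, an order-$n$ deficiency in $\deg q_{n,k}$ being incompatible with the prescribed vanishing order $N$ at infinity. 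A secondary difficulty is the stability of the Angelesco condition~\eqref{9} under the passage from unit masses to $\vec t$, since the extremal compact $\vec\Gamma(\vec t)$ moves with $\vec t$ and its components must remain disjoint; this reduces once more to the max--min existence result behind the analogue of Lemma~\ref{lem2}.
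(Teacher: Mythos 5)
Your plan is essentially the paper's own: the author gives no separate argument for Theorem~\ref{thm4}, stating only that the proof is identical to that of Theorems~\ref{thm1}--\ref{thm3} once the unit-mass problem is replaced by the equilibrium problem on ${\vec {\mc M}}^{\vec t}(\vec F)$, and the machinery of Section~3 (the classes ${\vec {\mc M}}^{\vec t}$, the $\vec t$-dependent extremal compacts $\vec\Gamma(\vec t)$ and constants $m_k(\vec t)$) is already built for arbitrary $\vec t$ precisely because the perturbed problem is used as a tool even in the case $t_k=1$. The one place you diverge is in treating the lower bound $\deg q_{n,k}=(1+o(1))\,t_k n$ as a separate obstacle requiring normality or a feedback argument through the interpolation conditions: in the paper's scheme this step does not exist, because the lemmas of Section~3 are stated for limit measures of total mass \emph{at most} the prescribed value and the contradiction with the orthogonality relations forces $\mu_k=\lambda_k$ with full mass $t_k$ in the conclusion, so the absence of degree deficiency and of mass escaping to infinity is an output of the argument rather than an input. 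Your second concern, the persistence of the Angelesco condition for $\vec\Gamma(\vec t)$, is likewise resolved in the paper only by the continuity of $\vec\Gamma(\vec t)$ in the Hausdorff metric for $\vec t$ near $(1,\dots,1)$; for $\vec t$ far from the balanced point the paper is no more explicit than you are, and the disjointness of the components of $\vec\Gamma(\vec t)$ must simply be taken as part of the hypothesis, as in Definition~1.
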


The proof of the theorem \ref{4} would not be any different from the
proofs of theorems \ref{1} - \ref{3}. We will restrict ourself to
proving the $t_k =1$ since this particular case is identical to the
general one in all the essentials and allow us significantly simplify
notations (notations associated with arbitrary $\vec t$ are rather
crouded). At the same time we note that in the section 3 we use the
generalized equilibrium problem as a technical tool in proving theorems
\ref{thm1} -  \ref{thm3}.

\subsubsection{Possible generalizations of class $\mc A$ and Angelesco conditions.}
It is also possible to generalize classes $\mc A$ of functions $f \in
\mc A(\ol \CC \setminus e)$. Instead of finite sets $e$ we can consider
set $e$ of capacity zero.  This would require some essential
modifications in part related to properties of extremal vector compact
$\vec\Gamma$ and we will not go in details. At the same time in the
``scalar'' considerations associated with $\GRS$-theorem from
\cite{GoRa87} where more general settings on exceptional sets do not
cause any additional problems we preserve assumptions of \cite{GoRa87}.

The Angelesco condition on $ \vec\Gamma$ may be relaxed. Essentially we
need that each components of the support $ \Gamma^1_j = \supp(
\lambda_j)$ of extremal vector measure $\vec \lambda_{\vec \Gamma}$ do
not intersect the other components of $\vec \Gamma$. Moreover,
assertions of the theorem remain valid if we assume
that each intersection $ \Gamma^1_i \cap  \Gamma_j $ is a finite set
for $i \ne j$ (eventually, we may allow for intersections of zero
capacity). Such generalizations will make proofs more difficult and we
do not attempt any of these.

Finally, it was mentioned above that Angelesco conditions may not be
 dropped without essential modification of the potential theoretic problems
 which represent zero limit distributions of Hermite--Pad\'e polynomials.
 It is believed that under general settings (no Angelesco condition) there
 is still an equilibrium problem describing such zero distribution.  Matrix
 $A$ and, in general, conditions on total masses has to be changed.
 The general setting are essentially known for $s = 2$ for second kind polynomials;
 see \cite{ApKuVa07},~\cite{Apt08}, \cite{Rak11} and also~\cite{RaSu13},
\cite{BuSu15},~\cite{Sue16} for Nikishin system.
For $s > 2$ the form of the matrix is known only for some particular cases.

An alternative way exists to describe weak asymptotics of
Hermite--Pad\'e polynomials in terms of an Abelian integral on an
algebraic Riemann surface. The main problem associated with this
approach is that some accessory parameters of this Riemann surface are
generally not known. A conjecture on determination of this Riemann
surface is briefly described in the end of this introduction. Some
further details related to the case may be found in section 4.

\subsection{Methods}

The proof of theorem  \ref{thm1} 
is contained in sections 3 of the paper. We outline briefly the methods used there. As usual, the starting point 
is the following  system of complex (nonhermittian) orthogonality relations
\begin{equation} \label{11}
 \oint_C R_n(z) z^k dz = \oint_C \( q_1 f_1 + \dots + q_sf_s\) z^k dz = 0, \quad k =0,1, \dots, ns + s -2
\end{equation}
where $C$ is a Jordan contour separating $\cup e_j$ from infinity.
Equations \eqref{11} are easily derived from interpolation condition
\eqref{2} using a standard procedure.

A general method of working with such relations is available in case $s
= 1$.
In this case construction of Hermite--Pad\'e approximations \eqref{1}
reduces to diagonal Pad\'e  approximations. Polynomials $q_n = q_{n,1} $
are denominators of diagonal Pad\'e approximants to a (single) function
element $f = f_1$ at infinity. Zero distribution for these polynomials has
been obtained by H.~Stahl in his fundamental papers
\cite{Sta85a}--\cite{Sta86b} where an
original method of working with complex orthogonality relation was created.

To this end we mention that there is a number of systems of so-called semiclassical Pad\'e
(and Hermite--Pad\'e) polynomials which can be studied by generalizations of
methods developed in the theory of classical orthogonal polynomials.
In particular, differential equations were used in many instances; see \cite{MaRaSu16} for further references. A general method based on matrix Riemann- Hilbert has been developed in last two
decades which may be used to study the strong asymptotics of Pad\'e
polynomials of multivalued functions;
see \cite{ApYa15} \cite{Nu86},~\cite{Ra93},~\cite{MaRaSu12}).


Stahl's method has been substantially generalized  by A.~A.~Gonchar and author
in~\cite{GoRa87}
for the case when $f(z)$ in a certain way depends on $n$ (case of variable
weight). The theorem 3 from~\cite{GoRa87} which we will call $\GRS$-theorem
describes zero distribution of complex orthogonal polynomials with variable
weights. There are many applications of the theorem (see e.g.
reviews~\cite{ApBuMaSu11},~\cite{Rak16} and recent papers
\cite{RaSu13}, \cite{Bus15},~\cite{Sue16}).
In particular, after some modifications $\GRS$-theorem may be applied to study Angelesco case of Hermite--Pad\'e polynomials and this is a main point of this paper. As an introduction to the complex case we consider the Angelesco case for Markov type functions in section 2. We use this simpler situations
to introduce some basic ideas of the method, in particular, reduction of a vector situations to weighted scalar ones.

\subsubsection{Reduction of vector cases to scalar ones with external field. \\
 Markov type functions.}
A reduction of asymptotics problems for Hermite--Pad\'e polynomials to
similar problems for scalar ($s=1$) orthogonal polynomials with
variable weights was first applied in the paper \cite{GoRa81} for
Markov type function (matrix equilibrium problem has been introduced in
this connection).
Such a reduction is based on a basic property of vector equilibrium distributions - each its component is a scalar equilibrium in the external field generated by all the other components. In \cite{GoRa81} this observation was used to study zero distribution for the Angelesco Hermite--Pad\'e polynomials associated with Markov type functions.
The systems of orthogonality relations for such polynomials are ordinary
Hermitian orthogonality relations (see~\cite{Sz75},~\cite{Wi69},~\cite{StTo92})
and this makes Markov case essentially
simpler then the case $\vec f \in \mc A$.
The associated equilibrium problems are also simpler in Markov case. It
is {\it a priory} known that measures characterizing zero limit
distribution are supported on the real line and max-min condition
\eqref{3} is automatically satisfied for them for potentials of measure
on real line.

At the same time Markov case is, formally speaking, not a particular
case of the complex case and has its own independent value. After the
original paper \cite{GoRa81} the case has been  studied for the in
\cite{GoRaSo97} in more general settings including certain combinations
of Angelesco and Nikishin's cases. In both papers the second kind
polynomials studied. We consider the Angelesco--Markov case in Section 2
for both first and second kind Hermite--Pad\'e approximations. Results
of the paper for first kind approximations are probably new.

\subsubsection{Reduction in complex case.}

In Section 3 we turn to the complex case which is significantly more
complicated and the approach to the problem is at some points
essentially  different. We will arrange for a reduction of the zero
distribution problem for Angelesco Hermite--Pad\'e polynomials to a
somewhat modified version of $\GRS$-theorem on the zero distribution of
weighted (scalar) complex orthogonal polynomials. We revisit two basic
lemmas in the proof $\GRS$-theorem in \cite{GoRa87}. Using these lemmas we
prove a lemma (lemma \ref{5}, sec. 3 below) on asymptotics of certain
integrals of polynomials and this theorem is then used for the
reduction of vector case to scalar case with external field. This part
of the proof uses mostly techniques developed in \cite{GoRa87}.

The other part of the reduction process is related to the equilibrium problems for vector potentials.

\subsubsection{Equilibrium conditions. Vector $S$-equilibrium problems.}
Vector - equilibrium measure $\vec \lambda = \vec \lambda_{\vec F}$ for
an arbitrary $\vec F \in \vec{\mc F}$ is uniquely defined by the
following characteristic properly  $W_k(z) = w_k = \min_{F_k}W_k $ for
$z \in \supp( \lambda_k)$ where $W_k$ are components of associated
vector potential which may be represented in two equivalent ways as
follows.
\begin{equation} \label{12}
\begin{aligned}
& W_k(z) =\sum^s_{i=1} a_{i,k}U^{\lambda_i}(z) = U^{\lambda_k + \lambda}(z),  \qquad
\text{where} \quad \lambda = \lambda _1 +\dots+\lambda_s \\
& W_k(z) =2 \left( U^{\lambda_k}(z) +  \varphi_k(z) \right) 
\qquad \text{where} \quad  \varphi_k(z) =  \frac 12 \sum^s_{i\ne k } U^{\lambda_i}
\end{aligned}
\end{equation}

Also, for an arbitrary $\vec F \in \vec{\mc F}$ corresponding vector -
equilibrium measure $\vec \lambda = \vec \lambda_{\vec F}$ is uniquely
defined by the equilibrium conditions. We present this conditions
in case $\vec F = \vec \Gamma$.
\begin{equation} \label{13}
W_k(z) = w_k, \quad z \in \Gamma^1_k = \supp( \lambda_k), \qquad
W_k(z) \geq w_k, \quad z \in \Gamma_k
\end{equation}
Thus, these equilibrium conditions are valid for any $\vec F \in \vec{\mc F}$
(with $\vec \lambda = \vec\lambda_{\vec F}$). The extremal vector compact set $\vec \Gamma$ has in addition another important symmetry property.

The following {\it vector $S$-property} (for symmetry) is valid for the equilibrium potentials
associated with the extremal compact set $\vec \Gamma$ under the Angelesco
condition. For any $j = 1, \dots, s$
 there exist a finite set $e_j$ such that  $\Gamma^0_j =  \Gamma^1_j \setminus  e_j$ is a disjoint finite union of open analytic arcs and
\begin{equation} \label{14}
\frac{\partial W_j}{\partial n_1}(\zeta)
=\frac{\partial W_j}{\partial n_2}(\zeta),\quad
\zeta\in\ \Gamma^0_j ;\quad
\end{equation}
where $n_1,n_2$ are opposite normals to $\supp(\lambda_j)$ at  $\zeta$.
Actually it is the $S$-property what we need to study the complex
orthogonal polynomials. The max-min energy problem is a general way to
prove existence of such a compact.

More exactly, solutions of asymptotics problems related to complex
orthogonal polynomials are often reduced to an existence problem for
compact set with $S$-property in given class of compact sets ($S$ equilibrium
problem). In many cases such a problem may be solved by reduction to
the problem maximization of equilibrium energy in given class. This is
exactly the method we use in the paper. As a remark we note that in
general settings vector $S$-equilibrium problems it rather difficult to
solve using this method. Under the Angelesco condition the reduction is
simpler. We will actually reduce the vector problem to weighted scalar
one and use methods developed in \cite{Rak12} to study a weighted scalar
problems (see also~\cite{MaRaSu11},~\cite{BuMaSu12}).

To this end, we emphasize an important difference between Markov (real) and general (complex)
cases for Hermite--Pad\'e polynomials is exactly in the structure of electrostatics associated with the situation. For Markov type functions associated vector equilibrium problem does not include the second part which is related to the maximization of the equilibrium energy \eqref{8} in lemma \ref{2}. Segments of real axis are exactly the curves maximizing the equilibrium energy in classes of continuums with fixed end points. Equivalently, the $S$-property \eqref{14} is valid for potentials of any measures on real line, since potentials of such measures are symmetric with respect to real line. So, the $S$-problem is not there.

In complex case the $S$-property of the extremal compact set $\Gamma$ in
the problem \eqref{8} is the substitution for such symmetry and it not
available {\it a priory}. Finding compact sets with $S$-property is, then,
an important part of the problem. The $S$-property plays also an
important role in construction of a Riemann surface $\mathcal R =
\mathcal R_{\vec f} $ which will introduced in section 4 of the paper.
In this section we  go first into some technical details regarding
properties of $\vec \Gamma$. In particular, we prove existence (lemma
\ref{2}) and some continuity properties of extremal compact sets. Methods
used here are mostly developed in \cite{Rak12}. Then we turn to the
Riemann surface which is technically used to establish some properties
of $\vec \Gamma$. At the same time it is useful in a discussion of the
problem at large.




\subsubsection{Riemann surface.}

There is an old observation going back to 1970-th and 80-th
that for some classes of vector - functions $\vec f = (f_1,\dots,
f_s)\in\mathcal A$ there is a Riemann surface  which in a sense controls
asymptotics of corresponding Hermite--Pad\'e polynomials. The fist instance
of such a situation was reported in V.~A.~Kalyagin's paper \cite{Ka79}.
Riemann surfaces also play a central role in the J.~Nuttall's important
review \cite{Nu84} (see also~\cite{Nu81}). Independently, for Markov Angelesco case construction
of such a surface from the solution of the vector-equilibrium problem was
presented by A.~I.~Aptekarev and V.~A.~Kalyagin in \cite{ApKa86}. They also
noticed that the connection goes both ways and a vector equilibrium problem
could be recovered from known Riemann surface. Therefore, the Riemann
surface may play a role similar to the one played by the vector equilibrium
measure $\vec \lambda(\vec f)$; see also~\cite{GoRaSu91},~\cite{GoRaSu92}.

With a time it become a commonly excepted conjecture that for arbitrary
$\vec f = (f_1,\dots, f_s)\in\mathcal A$ there exists a Riemann surface
$\mathcal R = \mathcal R_{\vec f} $  of $s+1$ sheets such that
asymptotics of Hermite--Pad\'e polynomials associated with $\vec f$ may
be described in terms of special functions which belong to this
surface. This Riemann surface may present a general approach to the
asymptotics of the Hermite--Pad\'e polynomials and it is an alternative
to the vector equilibrium problem. It is believed that the two ways are
closely related and, may be, formally equivalent.

One of the main problems related to this conjecture is that in general
$\mathcal R$ is not known. Another problem, not yet generally solved,
is the exact way to pass from the Riemann surface to asymptotics. Note
that the structure of the vector equilibrium problem is in general not
known either. But the Riemann surface still has an advantage; it
contains only a finite number of unknown parameters while vector
equilibrium problem may have an unknown geometric structure.

To this end we present a conjecture (due to
A.~Mart\'{\i}nez-Finkelshtein, S.~P.~Suetin and
the author) which was formulated on a basis of partial results from
\cite{MaRaSu16} on zero distribution of first kind Hermite--Pad\'e
polynomials for functions $\vec f = (f_1,\dots, f_s)\in\mathscr L$.
Such polynomials satisfy a linear differential equation with polynomial
coefficients which gives a powerful approach to asymptotics. We
formulate a more constructive form of the conjecture on existence of a
Riemann surface  $\mc R = \mc R(\vec f)$ including a method which may
help to answer the key question: how to determine its unknown
parameters.






\section{Markov type functions. }

In this section we revisit the case of Markov type functions which is a model situation where the Angelesco condition is most transparent.

The first result on asymptotics of the Hermite--Pad\'e polynomials has
been obtained  by  V.~Kalyagin \cite{Ka79} for the case of two
Jacobi-type functions whose weights had nonoverlapping but adjoint
supports. He found a generating function for the second kind
Hermite--Pad\'e polynomials and, then, used a generalization of the
classical Darboux method to study strong asymptotics of polynomials:
in this connection see also ~\cite{Sz75}, \cite{NiSo88},
\cite{ApMaRo97} and references therein. Analgebraic Riemann surface
has been for the first time introduced in  \cite{Ka79}.


A potential theoretic approach to the problem has been developed in the
paper  \cite{GoRa81} by A.~A.~Gonchar and the author for a vector $\vec f
= (f_1, \dots, f_s) $ of Markov type functions. In the  Angelesco
situation a theorem on zero distribution of the Hermite--Pad\'e
polynomials has been proved in   \cite{GoRa81} and a vector equilibrium
problem has been used to characterize the limit zero distribution.
Methods of the current paper are in part originated in \cite{GoRa81}.
We will recall some details related to the case.

Markov type function is a Cauchy transform of a positive measure on
$\R$ with compact support
\begin{equation} \label{M1}
f(z)\  =\  \int \ \frac { d\sigma (t)}{z-t}, \quad z \in \overline{\mathbb C}\setminus \supp(\sigma)
\end{equation}



The Angelesco case for Markov type functions is defined by the
condition that supports of associated measures are disjoint or, at
least, not overlapping. We do not pursue maximal generality in this
discussion, so, we will assume that supports of measures are disjoint
intervals, measures are absolutely continuous and their densities are
positive $a.e$. on corresponding intervals. Thus, we consider vector
function $\vec f = (f_1, \dots, f_s)$ where
\begin{equation} \label{M2}
f_k(z)\  =\  \int \ \frac { w_k (t) }{z-t} \ dt , \quad z \in \overline C \setminus F_k, \quad F_k =  \supp(\sigma_k), \quad k =1, 2, \dots, s
\end{equation}
and
\begin{equation} \label{M3}
 \quad w_k(x) >0 \ \ a.e. \text{ on the interval } \ \ F_k,
 \qquad k =1, 2, \dots, s; \qquad F_i \cap F_j = \varnothing, \ \ i \ne j.
\end{equation}
We note that there is in a sense opposite case when two Markov
functions are generated by measures on the same interval $F =
\supp(\sigma_1) = \supp(\sigma_2)$. The case was introduced by
E.~M.~Nikishin~\cite{Nik86} (see also~\cite{NiSo88}) and was named after him.

\subsection{Second kind Hermite Pad\'e polynomials.}\label{subsec2.1}

There two types of Hermite Pad\'e approximations and polynomials which are called polynomials of the first (or Latin) and, respectively, second (or German) kind. Both kinds are associated with a vector $\vec f =(f_0, f_1, \dots, f_s )$ of $s\in \N$ analytic functions defined by their Laurent expansion at infinity
\begin{equation}
f_k(z)= \sum_{m=0}^\infty \frac{f_{m,k}}{z^m}, \quad k=0,1,\dots.
\label{M4}
\end{equation}
First kind polynomials $Q_{n,k}$ were introduced in \eqref{2} above
(for the case of equal degrees of polynomials). We present a definition
of the second kind polynomials and approximations for the similar case
when order of approximation is same for each the function. For a
natural $n\in\N$ the $n$-th vector of the second  kind {\it
Hermite--Pad\'e approximations }
$\pi_{n,k}(z) = \wt P_{n,k}/P_{n}$
with a common denominator $P_n(z)$ is defined by the following
relations
\begin{equation} \label{M5}
P_n(z) f_k(z) - \wt P_{n, k} = O\(\frac1{z^{n+1}}\), \quad k =1, 2, \dots, s; \qquad P_n \in\PP_{ns}
\end{equation}
Clear that degrees of all numerator polynomials $\wt P_{n, k}$ are at most ${ns }$. 

Hermite--Pad\'e polynomials of both kinds are equivalently defined by
certain systems of orthogonality relations. In case of Angelesco Markov
type functions the orthogonality involved is the usual (traditional)
Hermitian orthogonality with positive weights which is generalized it
two different ways.

Orthogonality conditions \eqref{11} for the remainder associated with the
first kind approximations in Markov case \eqref{M2}--\eqref{M3} take form
\begin{equation} \label{M6}
\begin{aligned}
& \oint_C R_n(z) z^j dz = \oint_C \( q_{n,1} f_{n,1} + \dots + q_{n,s}f_s\)(z) z^j dz = \\
&=\int_{F_1}   q_{n,1}(x) w_1 (x) x^j dx+ \dots + \int_{F_s} q_{n,s}(x) w_s(x) x^j dx = 0
\end{aligned}
\end{equation}
for $ j =0,1, \dots, ns + s -2$. This may be equivalently written as
$$
\int_\R r_n(x) g(x) dx = 0
$$
for any polynomial $g \in\PP_{ns+s -2}$ where
the function $r_n(x) = (q_{n,1} w_1+ \dots +  q_{n,s} w_s) (x) $ is
defined on the whole real axis if we assume that $w_k(x) = 0$ in the
complement to $F_k$ It follows, in particular that the function $r_n$ has
at least $ns + s -1$ sign changes on $\R$. For $x \in F_k$ the function
$r_n(x) = q_{n,k}w_k$ changes sign at most $n$ times. It follows that
$q_{n,k}$ has exactly $n$ simple zeros in $F_k$. In addition $r_n$
changes sign every time when we pass from $F_k$ to $F_{k+1}.$

Orthogonality conditions for the second kind polynomials common denominator
$P_n$ in \eqref{M2} are
\begin{equation} \label{M7}
\int_{F_k}   P_n w_k x^j dx = 0,  \quad j =0,1, \dots, n - 1, \quad  k = 1, \dots, s.
\end{equation}

In particular, it follows from here that polynomial $P_n$ in \eqref{M5} has exactly $n$ simple zeros on each interval $F_k$  and, therefore admit factorization $P_n = \prod_{k=1}^s P_{n,k}$ where all polynomials $P_{n,k}$ are of degree $n$ and zeros of $P_{n,k}$ belong to $F_k$.

\subsection{Theorem on zero distribution.}

Under assumptions above the limit distributions os Hermite--Pad\'e
polynomials are  characterized by the following theorem.

\begin{thm} \label{M-1}
We have
\begin{equation}\label{M-11}
\begin{aligned}
&(A) \quad \nu_{n} = \frac 1n \sum_{P_{n}(x) = 0}\delta (x) \ \overset{*}{\to} \
\lambda_1 + \lambda_2 +\dots + \lambda_s\\
&(B) \quad \mu_{n,k} = \frac 1n \sum_{q_{n,k}(x) = 0}\delta (x) \ \overset{*}{\to}\ {\lambda_k},\quad
k =1,, \dots, s.
\end{aligned}
\end{equation}
If $R_n$ is normalized by the conditions $c_{n,1} = 1$ then
other leading coefficients are positive and we have
\begin{equation} \label{M-12}
\lim_{n\to \infty} \frac 1n \log |c_{n,k}| = w_k - w_1
\end{equation}
where $\vec{\lambda} = \vec{\lambda}_{\vec F}\in\vec{\mc{M}} (\vec{F})$ is vector equilibrium measure from lemma \ref{lem1} and $w_k$ are equilibrium constants in \eqref{12} - \eqref{13}
\end{thm}

Thus, zero distribution of Hermite--Pad\'e polynomials of both first
and second kind are represented by the same vector equilibrium measure
$\vec \lambda$ (see lemma \ref{lem1}). In other terms, it follows from
the theorem \ref{M-1}  that the second kind polynomial $P_n \in
\P_{ns}$ has the same  the $n$-th asymptotics as the product
$q_{n,1}\dots q_{n,s}$ of first kind polynomials (up to a normalizing
constant).

We note also that in case of Markov type functions the lemma \ref{lem2} is not involved in the definition of limit measures $\lambda_k$. The reason is the symmetry of potentials of measures $\mu$ supported on $\R$ with respect to $\R$: we have $U^\mu(\overline z) = U^\mu(z)$ for any such measure and for any $z$ not in the support of $\mu$. It follows that the $S$-property \eqref{14} is satisfied for $U^\mu$ at points where normal derivatives exist. Hence,  in the Markov case we have $\vec\Gamma = \vec F$. 

An important difference between the first and second kind polynomials is
that leading coefficients of the first kind polynomials are defined by
orthogonality conditions \eqref{M6} like all other coefficients. More
exactly, the function $r_n$ is defined by \eqref{M6} up to a multiplier, so
that the leading coefficient $c_{n,k}$ of one of the polynomials
$q_{n,k}(x) =c_{n,k}Q_{n,k}(x)$ may be selected arbitrarily as a
normalization condition. Then the other coefficients are uniquely
determined by the orthogonality conditions \eqref{M6}. Asymptotics of
leading coefficients presented in \eqref{M-12} is an important part of the
solution of the problem. In particular, we need this asymptotics to obtain
the asymptotics of $|q_{n,k}(z)|^{1/n}$ in $\CC \setminus F$ as $n \to
\infty$.

As it was mentioned above, the part $(A)$ of the theorem is well known; see
\cite{GoRa81} and \cite{GoRaSo97}. Part (B) seems to be new. To this end we
note that most part of the papers in the literature on asymptotics of
Hermite Pad\'e polynomials were devoted so far to the second kind
approximations and polynomials.

In the connection to the Markov case see also some more recent results
in  \cite{ApLy10} and \cite{Rak11} where mixed case has been studied for
two Markov functions in case when one support is a subinterval of
another one. Nikishin case is considered in \cite{LoPe15},~\cite{ApLoMa17}; see
also~\cite{RaSu13},~\cite{Sue15},~\cite{Sue17}.

We will present a proof the theorem \ref{M-1} based on a standard
reduction of a vector orthogonality to a scalar weighted orthogonality.
In terms of associated equilibrium problems this is equivalent to a
possibility to define vector equilibrium in terms of scalar weighted
equilibrium. All these ideas are essentially well known, but we will
brief but connected presentation anyway. First, some of the results are
new. Second, we use the opportunity to introduce in a comparatively
simple situation certain arguments which will be generalized and used
in complex case.


\subsection{Equilibrium measure in an external field.}\label{2.3}
We begin with necessary definitions.

Let $F \in \mc F$; recall that such compact sets are regular with respect
to the Dirichlet problem.  By an external field on $F$ we will
understand generally a continuous real valued function  $\varphi(x)$ on
$F$ if it is not explicitly said otherwise. Class of external field may
be significantly generalized (see e.g. \cite{SaTo99})  but we want to
keep  this exposition as short as possible.


As above, we use notation $\mc M$ for all positive Borel  measures in plane.  Let $ t  > 0$ and
$\mc M^t(F) $ be a set of $\mu \in \mc M$ on $F$ with total mass $\mu(F) = t.$
By $\mc{E}_\varphi(\mu)$ we denote the (total) energy of a measure $\mu \in \mc M$ in the external field $\varphi$
\begin{equation}
\label{M15}
\mc{E}_\varphi(\mu)=\iint\log\frac1{|x-y|}\,
d\mu(x)\,d\mu(y)+2\int\varphi(x)\,d\mu(x),
\end{equation}
The equilibrium measure $\lambda=\lambda^t_{\varphi, F}$ for a compact
$F$ in the external field $\varphi$ is defined by the following
minimization properties
\begin{equation}
\label{M16}
\lambda \in \mc M^t(F), \qquad
\mc{E}_\varphi(\lambda)=\min_{\mc{M}\in\mc M^t(F)}\mc{E}_\varphi(\mu).
\end{equation}
Equivalently the equilibrium measure $\lambda^t=\lambda^t_{\varphi, F}$ is defined by the following equilibrium conditions for total potential
\begin{equation}
\label{M17}
\(V^\lambda+\varphi\)(x)
\begin{aligned}
& = w, \quad  
 ~ x\in\supp\lambda, \\
&\ge w, \quad ~x\in F.
\end{aligned}
\end{equation}
Equation \eqref{M17} uniquely defines the pair of measure $\lambda\in\mc{M}(\Gamma)$ and constant $w=w_{\varphi, F}$ -- equilibrium constant. In case $t =1$ (the main case in this paper) we drop index $t$ from notations.

We note that for  more general classes of external fields $\varphi$ and/or for more general classes of compact set $F$ in plane equality in the first line in \eqref{M17} holds except for a set of capacity zero (inequality $\leq$ holds at any point). In the connection with these definitions see original papers \cite{GoRa84} \cite{GoRa87} and the book \cite{SaTo99}.

\subsection{Orthogonal polynomials on $\R$ with varying weight.}

Let $F$ be a finite union of intervals, $\Phi_n(x)$ be a sequence of positive, continuous real-valued functions on a set $F$ sa
\begin{equation}\label{Field}
\varphi_n(x) : =  \frac 1{2n}\ \log \frac 1{\Phi_n(x)}\ \to \ \   \varphi(x)
\end{equation}
uniformly on $F$ (so that $\varphi$ is a continuous function on $F$)
and  $f(x)>0$ a.e.\  on $\Gamma$. Let polynomials $Q_n(x)=x^n+\dotsc$
are defined by orthogonality relations
\begin{equation}
\label{Ort}
\int_F Q_n(x)\ x^k~ \Phi_n(x) f(x)\,dx=0,
\quad k=0,1,\dotsc,n-1.
\end{equation}
The sequence $Q_n$ presents a typical example of what is called orthogonal
polynomials with varying weight; it usually means that the weight functions
depends on the degree $n$ of the polynomial in such a way that their $n$-th
root asymptotics exists (clear that \eqref {Field} remains essentially
valid if we replace there $\Phi_n(x)$ with the total weight $\Phi_n(x)
f(x)$). The following theorem by A.~A.~Gonchar and the author \cite{GoRa84}
was the first general result on zero distribution of such orthogonal
polynomials.  We present a simplified version needed for our purposes.

\begin{thm}\label{M-3} Under the assumptions on $F, \Phi_n, f$ stated above, we have
$$
\frac1n\  \mc{X}\(Q_n\)=\frac{1}{n}\sum_{Q_n(\zeta)=0}
\delta(\zeta)\ \  {\overset{*}{\to}}\ \  \lambda
$$
where $\lambda = \lambda_\varphi$ is the equilibrium measure on $F$ in the external field $\varphi$.
\end{thm}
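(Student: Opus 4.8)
The plan is to establish the weak-$*$ convergence by proving that every weak-$*$ limit point of the sequence $\nu_n:=\frac1n\mc{X}(Q_n)$ coincides with the weighted equilibrium measure $\lambda=\lambda_\varphi$ characterized in \eqref{M16}--\eqref{M17}. First I would record, by the classical sign-change argument applied to the orthogonality \eqref{Ort} against lower degree polynomials with the a.e.-positive weight $\Phi_n f$, that $Q_n$ has exactly $n$ simple zeros, all lying in the convex hull of $F$. Hence each $\nu_n$ is a probability measure supported in a fixed compact set, the family $\{\nu_n\}$ is weak-$*$ precompact, and it suffices to identify an arbitrary limit $\nu=\lim_{n\in\Lambda}\nu_n$. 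Throughout I will use the identity $\frac1n\log|Q_n(z)|=-U^{\nu_n}(z)$ together with the principle of descent and the lower envelope theorem to pass from the potentials $U^{\nu_n}$ to $U^\nu$.

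The central device is the extremal characterization of $Q_n$: it minimizes the weighted norm $h_n:=\int_F P^2\,\Phi_n f\,dx$ among monic $P$ of degree $n$. Writing $\Phi_n=e^{-2n\varphi_n}$ with $\varphi_n\to\varphi$ uniformly by \eqref{Field}, the integrand becomes $e^{-2n(U^{\nu_n}+\varphi_n)}f$. Constructing a comparison polynomial whose zeros discretize $\lambda_\varphi$ (Fekete-type points for the field $\varphi$) and estimating its weighted sup-norm gives $\limsup_{n}\frac1{2n}\log h_n\le -w$, where $w$ is the equilibrium constant of \eqref{M17}; the facts that $f>0$ a.e., $f\in L^1(F)$, and $F$ is a finite union of intervals ensure that passing between the $L^\infty$ and the $L^2(f\,dx)$ norms, as well as the presence of $f$, cost only a subexponential factor $e^{o(n)}$. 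A matching lower estimate $\liminf_n\frac1{2n}\log h_n\ge -w$, which uses the orthogonality \eqref{Ort} more essentially (a lower bound for the weighted polynomial at the points of its own support), then pins down $\frac1{2n}\log h_n\to -w$.

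These two norm bounds translate, via the lower envelope theorem, into the two Frostman inequalities for the limit measure $\nu$: the upper bound forces $U^\nu+\varphi\ge w$ quasi-everywhere on $F$, while the lower bound forces $U^\nu+\varphi\le w$ on $\supp(\nu)$. Since $|\nu|=1$ and $\supp(\nu)\subset F$, these are exactly the characterizing conditions \eqref{M17}. Integrating them against $\lambda_\varphi$ and against $\nu$, and invoking the positive-definiteness of the logarithmic energy on the mass-zero signed measure $\nu-\lambda_\varphi$, one obtains $[\nu-\lambda_\varphi,\nu-\lambda_\varphi]\le 0$, whence $\nu=\lambda_\varphi$. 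As every weak-$*$ limit point equals $\lambda_\varphi$, the full sequence converges, which is the assertion of the theorem.

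The step I expect to be the main obstacle is the rigorous conversion of the $L^2$ (integral) information encoded in $h_n$ into the pointwise potential inequalities, uniformly in $n$: one must verify that the a.e.-positivity of $f$ and the $n$-dependence of the weight $\Phi_n$ affect only the $e^{o(n)}$ factors, handle the unavoidable exceptional sets of capacity zero through the lower envelope theorem and the regularity of $F$ with respect to the Dirichlet problem, and---most delicately---produce the complementary lower bound giving $U^\nu+\varphi\le w$ on $\supp(\nu)$. The uniform convergence $\varphi_n\to\varphi$ in \eqref{Field} is precisely what keeps these error terms uniform and preserves the single equilibrium constant $w$ in the limit.
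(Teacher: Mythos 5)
Your outline is correct, but it is not the paper's route --- it is essentially the \emph{original} Gonchar--Rakhmanov proof from \cite{GoRa84}, which the author explicitly sets aside just before giving his own: ``The original proof in \cite{GoRa84} was based on the $L^2$ extremal property of polynomials $Q_n$. We present a different proof based directly on the orthogonality conditions.'' Your plan (minimality of $h_n=\int_F P^2\Phi_n f\,dx$ over monic $P\in\P_n$, an upper bound via a comparison polynomial discretizing $\lambda_\varphi$, a matching lower bound relying on the regularity of the measure $f\,dx$ guaranteed by $f>0$ a.e., the two Frostman inequalities for a weak-$*$ limit $\nu$, and positive definiteness of the energy applied to $\nu-\lambda_\varphi$) is the classical one and can be completed; the technicalities you flag (polar exceptional sets, whether $\nu$ charges them, the $e^{o(n)}$ passage between $L^2$ and sup norms) are precisely the ones requiring care and are handled in \cite{GoRa84} and in \cite{SaTo99}. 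The paper instead argues by contradiction and never touches $L^2$ norms: if $\frac1n\mc{X}(Q_n)\to\mu\ne\lambda$ along a subsequence, the equilibrium conditions \eqref{M17} fail, so there is $x_0\in\supp\mu$ with $(U^\mu+\varphi)(x_0)>m=\min_F(U^\mu+\varphi)$; one drops two zeros $a_n,b_n$ of $Q_n$ near $x_0$, sets $P_n=Q_n(x)/\bigl((x-a_n)(x-b_n)\bigr)\in\P_{n-2}$, and shows that in $\int_F Q_nP_n\Phi_n f\,dx$ the nonnegative contribution from $F\setminus\Delta$, of size $e^{-2mn+o(n)}$, strictly dominates the contribution from the neighbourhood $\Delta$ of $x_0$, of size at most $e^{-2m_1n+o(n)}$ with $m_1>m$, so the integral is nonzero for large $n$, contradicting \eqref{Ort}. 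The trade-off: your route is shorter to state but exploits positivity of the weight at both key steps (the extremal characterization of $h_n$ and the energy argument), and therefore does not generalize; the paper's version is designed precisely so that its mechanism --- removing zeros near a point where equilibrium fails and comparing the $n$-th root size of two pieces of the orthogonality integral --- survives the passage to complex non-Hermitian orthogonality in Section 3, which is the whole point of presenting this proof here.
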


The theorem has many applications; here we use it for the reduction of a vector zero distribution problem to a scalar one.

The original proof in \cite{GoRa84} was based on the $L^2$ extremal property of polynomials $Q_n$.
We present a different proof based directly on the orthogonality conditions. This proof exhibits in a simple situation one of a basic elements of the method which we will use also in case of complex valued weights.

\begin{proof}
From the contrary, assume that assertion of the theorem is not true. Then it is possible, using weak-star compactness of the space of unit measures (on the sphere), to select a weakly convergent subsequence $\quad \frac1n\  \mc{X}\(Q_n\)  \to  \mu \quad$ where
 $\Lambda= \{n_k\}_{k=1}^\infty \subset \N$  such  $\mu \ne \lambda.$

Then a contradiction with the orthogonality relations \eqref{Ort} will
be obtained by construction a sequence of  a polynomial $P_n,\quad n\in
\Lambda$ of degree $<n$ for which the $n$-th root asymptotics of the
expression
\begin{equation*}\label{}
 I_n(F) = \int _{F} Q_n(x) P_n(x)\Phi_n(x) f(x) dx
\end{equation*}
may be determined. The asymptotics will infer that $I_n(f) \ne 0$ for large enough $n$ which presents desired contradiction.

We construct the desired polynomial $P_n$ by duplicating the polynomial $Q_n$
but making one small modification. More exactly, we need that $\deg P_n
<n$ so that we have to drop at least one zero of $Q_n.$ We will drop
two zeros selected as follows. Since $\mu \ne \lambda$ the equilibrium
conditions \eqref{M17} are not satisfied. It follows that there is a
point $x_0 \in \supp \mu$ such that $(U^\mu+\varphi)(x_0) > m = \min_{x
\in F}( U^\mu+\varphi)(x)$. Taking into account lower discontinuity of
$U^\mu$ we infer that there is $r$-neighborhood $\Delta = [x_0-r, x_0 +r]$,
$r
>0$ such that $( U^\mu+\varphi)(x) \geq m_1 > m $ for $x \in \Delta
\cap F.$

Since $x_0 \in \supp \mu$ we can find for large enough $n$ two zeros
$a_n, \ b_n$ of $Q_n$ in the $r$-neighborhood of $x_0$ and we
define
\begin{equation}\label{P_n}
P_n(x) = \frac {Q_n(x)}{(x-a_n)(x-b_n)}
\end{equation}
With this $P_n$ we will determine asymptotics of $I_n(F)$ above. We have
$$
 \mc{X}_n =  \frac1n\  \mc{X}\(Q_n P_n\) 
\ \  {\overset{*}{\to}}\ \  2 \mu
$$
Now we use well known convergence properties of potentials of arbitrary
weakly convergent sequence of measures, say, $\nu_n \ \to  \nu$. This
implies that $U^{\nu_n} \to U^\nu$ in linear Lebesgue measure on any
rectifiable curve in plane. Convergence of potentials is also
semiuniform from below. In particular we have convergence of minimums
$\min_F U^{\nu_n} \to \min_F U^\nu$ over any regular (for the Dirichlet
problem) compact set $F$ in plane (see \cite {Lan66}). It follows that we
have convergence (of nonnegative functions)
$$
E_n(x) = \exp \left\{ - U^{\mc X_n}(x) + \varphi_n (x)  \right\}   \to
E(x) = \exp \left\{ - 2 ( U^{\mu}(x) + \varphi (x)) \right\}
$$
on $F$ is in measure and semiuniform from above. Since $f >0$ a.e. on $F$ it follows the
$$
\lim_{n\to\infty}
|I_n(F \setminus \Delta)|^{1/n} =
\lim_{n\to\infty}
\left(\int _{F \setminus \Delta} E^n_n(x)  f dx\right)^{1/n}
= e^ {-2m} = \max_{F \setminus \Delta} E(x) > 0
$$
(note that $Q_n P_n \Phi_n f \geq 0$ a.e. on $F \setminus \Delta$). On the other hand we have
$$
\varlimsup_{n\to\infty}|I_n(F \cap \Delta)|^{1/n}\leq
\varlimsup_{n\to\infty}
\left(\int _{F\cap\Delta} E^n_n(x)  f dx\right)^{1/n}
= e^ {-2m_1} = \max_{F \cap \Delta} E(x) < e^{-2m}.
$$

  It follows that $\int_F Q_n P_n\Phi_n f dx \ne 0$ for large enough $n$ in contradiction with orthogonality conditions for $Q_n$
\end{proof}

\subsection{Proof of theorem \ref{M-1}}

Part (A) of theorem is well known, however, we present a proof. It is short and illustrate one important detail in the procedure of reduction of vector cases to weighted scalar ones.
\begin{proof}
We use orthogonality relations \eqref{M7}. As it is mentioned
afterwards, it follows by \eqref{M7} that $P_n = \prod_{k=1}^s P_{n,k}$
where all polynomials $P_{n,k}$ are monic, of degree $n$ and zeros of
$P_{n,k}$ belong to $F_k$. We note that here we {\it a priory} have
important information about location of zeros of polynomials. This is
crucial for the proof below;
see remarks in sec. 2.6 on the case of complex valued weights.

Select a  subsequence $\Lambda= \{n_k\}_{k=1}^\infty \subset \N$  such that
$$\mu_{n,k } = \quad \frac1n\  \mc{X}\(P_{n, k}\)  \to  \mu_k, \quad \text{as}\quad
n\to \infty, \ n \in \Lambda; \qquad k = 1, 2, \dots, s. $$
Now it is enough to prove that $\mu_k  = \lambda_k$.

We will show that for any $k$ the measure $\mu_k$ is the unit
equilibrium measure in the external field $\varphi_k(x) = \frac 12
\sum_{j \ne k}U^{\mu_j}(x).$  Indeed,  the group of orthogonality
conditions in \eqref{M7} associated with the index $k$ defines
$P_{n,k}$ as the orthogonal polynomial on $F_k$ with varying weight
$\Phi_n(x) w_k(x)$ where $\Phi_{n,k}(x) = \prod_{j \ne k}P_{n,j}(x).$
For a fixed $k$ this function satisfy condition \eqref{Ort} with
$\varphi(x) = \varphi_k(x)$ and we desired proposition follows by
theorem \ref{M-3}.  In terms of the energy functional
$\mc{E}(\vec{\mu})$ in lemma \ref{lem1} this proposition means that
vector measure $\vec{\mu} = (\mu_1, \dots, \mu_s) $ provides a
component wise minimum of this functional, that is, minimum of  $\mc{E}
(\mu_1, \dots, \mu, \dots, \mu_s) $ over $\mu \in M(F_k)$ (variable
measure $\mu$ takes place of $\mu_k$) is achieved when $\mu = \mu_k$.

 The energy functional $\mc{E}(\vec{\mu})$ is convex
(see \cite[Capter 5, Lemma 4.3]{NiSo88}) and, therefore, does not have componentwise minimuma different from the global one.
\end{proof}
\vspace{5mm}

The main difference between the proof of the part B of the theorem and
the proof of part A above is that now asymptotics \eqref{M-12} of the
leading coefficients $ c_{n,k}$ of polynomials $q_{n,k}(x) =  c_{n,k}
x^n + \dots $ is proved simultaneously with zero distribution.
\begin{proof}

 We select a subsequence $\Lambda= \{n_k\}_{k=1}^\infty \subset \N$  such that
\begin{equation*}
\mu_{n,k } = \quad \frac1n\  \mc{X}\(q_{n, k}\)  \to  \mu_k, \quad \text{as}\quad
n\to \infty, \ n \in \Lambda; \qquad k = 1, 2, \dots, s.
\end{equation*}
and also
\begin{equation*}
|c_{n,k}|^{1/n} \to e^{- u_k} \quad \text{as}\quad
n\to \infty, \ n \in \Lambda; \qquad k = 1, 2, \dots, s.
\end{equation*}
where $u_k  \in [-\infty, +\infty].$ We have to take into account a
possibility that some of numbers $u_k$ may not be finite. In this
connection it is convenient to normalize function $r_n(x) $
such that $\max_k u_k = 0$, so that $u_k \leq 0$ for all $k.$ Then all numbers $e^{-u_k}$ are finite.

Orthogonality conditions \eqref{M6} are equivalent to the assertion that for any polynomial $P_n \in\PP_{ns +s -2 }$ we have
\begin{equation*}\label{}
\sum_{j=1}^s \int_{F_j}q_{n,j}(x) P_n(x) w_k(x) dx = \int_F r_n(x) P_n(x) x = 0
\end{equation*}
where \ $r_n(x) = q_{n,k}(x) w_k(x)$ on $F_k$ and  $r_n(x) = 0$ for $x \notin F =\cup F_k.$


Now we will construct a polynomial $P_n \in\PP_{ns +s -2 } $ for which it is not valid if assertions of the part (B) of the theorem are violated. A preliminary definition of $P_n$ contains a few parameters which will be determined later. We select one of the integers $k$ from $1 \leq k \leq s$ (let us call this index $k$ exceptional). Then we select any two zeros $a_n, b_n$ of the polynomial $q_{n,k}$
and define
\begin{equation}\label{P_n-1}
P_n(x) =  \frac{ h_n(x)}{ (x- a_n)(x- b_n) }\   \prod \ c^{-1}_{n,k}\  q_{n,k}(x)
\end{equation}
where $h_n(x)$ is a monic polynomial of degree $s-1$ which has one simple zero at each gap between intervals $F_k$. Thus, $P_n(x) \in\PP_{ns +s -3}$ is a sequence of monic polynomial with the limit zero distribution not depending on the choices of parameters. We have  as $n\to \infty, \ n \in \Lambda$
$$
\frac1n\  \mc{X}\(P_{n}\)  \to \mu = \sum_{k=1}^s \mu_k, \quad \text{and, therefore,}\quad
\frac1n\  \mc{X}\(q_{n,j} P_{n}\)  \to \mu_j + \mu, \quad j = 1, \dots, s
 $$

Next, we define
$$
I_{n.j} = \int_{F_j} r_n(x) P_n(x) dx  =
\int_{F_j} q_{n,j}(x)w_j(x) P_n(x) dx
 \qquad \text{for}\quad \quad j = 1, \dots, s.
$$
It follows from the construction of polynomials $P_n$ that the function $r_n(x) P_n(x)$ is nonnegative on $F \setminus [a_n, b_n]$ and, therefore, $r_n(x) P_n(x) = ( q_{n, j}w_jP_{n})(x)\geq 0$ on $F_j$ for any non exceptional $j$; from here $I_{n.j} > 0$ for $j \ne k.$

Finally, using the same convergence properties of potentials as in the
proof of part A we conclude that as $n\to \infty$ and  $n\in \Lambda$
we have the following relations
\begin{equation} \label{I}
I^{1/n}_{n,j}  \to e^{-m_j}  \quad \text{for}\quad \quad j \ne k \qquad
\text{and} \quad  \overline \lim\  |I_{n,k}|^{1/n}  \leq  e^{-m_k}
\end{equation}
as $n\to \infty, \ n \in \Lambda$ where
\begin{equation} \label{I-1}
m_j :=  u_j + \min_{x \in F_j}\ U^{\mu_j +\mu}(x), \qquad  j =1, \dots, s.
\end{equation}

Now we prove that all numbers $m_j$ are equal for $j = 1, \dots , s$.
If it was not so then the minimal of them will be strictly less the
maximal one, that is, there is $k$ such that  $m_k  < m = \max_{j}
m_j.$ For this exceptional index $k$ we select as $a_n, b_n$ any two
zeros of $P_n$ in $F_k$ and choose arbitrarily  other parameters of
$P_n.$ Orthogonality of $r_n$ to this polynomial in terms of constants
$I_{n,j}$ is written as
$$
\sum_{j=1}^s I_{n,j}= 0, \qquad\text {so that} \qquad |I_{n,k}| = I_n:= \sum_{j\ne k}^s I_{n,j}
$$
It would imply $m_k = m$   in contradiction with the inequality $m_k < m$ established above.

It remains to prove that $\vec \mu =\vec \lambda$. We use again an approach used in the proof of part (A), that is, we observe that it is enough to prove that each component $\mu_k$ of $\vec \mu$ is the equilibrium measure in the field $\varphi_k(x) = \frac 12 U^{\nu_k} (x)$ where $\nu_k = \nu - \mu_k = \sum_{j\ne k} \mu_j.$ However, here we can not make a direct reference to the theorem \ref{M-3} as we did when proving part (A); instead we will apply the method which we used in the proof of this theorem.

Assume the contrary; one of the components, say $\mu_k$, is not an
equilibrium measure in the field $\varphi_k(x).$ Without loss of
generality we may assume that $ k =1.$ Thus, equilibrium condition is
violated for $\mu_1 $, which means that we can find a point $x_0 \in
\supp \mu_1$ such that $(U^{\mu_1} + \varphi_1)(x_0) > m_1 = \min_{x
\in F}( U^{\mu_1}+\varphi_1)(x)$. Then we can also find a neighborhood
$\Delta = [x_0-r, x_0 +r],\ r >0$ such that $( U^{\mu_1}+\varphi_1)(x)
\geq m_0 > m_1 $ for $x \in \Delta \cap F.$

It follows from  $x_0 \in \supp \mu_1$ that for large enough $n$ there exist two zeros $a_n, \ b_n$ of $q_{n,1}$ in  $\Delta.$ Using these parameters we define polynomial  $P_n(x)$ by \eqref{P_n-1}. The conclusion of the proof is similar to what we used to prove equality of constants $m_j$. The small modification has to be made: in place of exceptional set $F_k$ (now $k=1$) we separate a part of it, namely, the segment $\Delta_n = [a_n,b_n] \subset \Delta \subset F_1$. We have $r_n(x)P_n(x) \geq 0$ on $F \setminus \Delta_n$ and orthogonality condition \eqref{M6} imply
$$
\int_{F_1 \setminus\Delta}r_nP_n dx \leq  \int_{F \setminus\Delta_n}r_nP_n dx  = - \int_{\Delta_n}r_nP_n dx   =
 \left|\int_{\Delta_n}r_nP_n dx \right| \leq \int_{\Delta}|r_nP_n| dx.
$$
Now we can raise both pats to power $1/n$ and go to the limit as $n\to \infty, \quad n \in \Lambda.$ Limit to the left will be strictly larger then limit to the right in contradiction with inequality above (the conclusion is identical to that in theorem \ref{M-3}).
\end{proof}

\subsection{Remarks on generalization for complex weights}
In the next section 3 we will study zero distributions of Hermite
Pad\'e polynomials for functions $\vec f \in \mc A.$ In equivalent
terms it means that we will deal with systems of complex orthogonality
relations in place of Hermitian ones. This would require essential
modifications in the method. At the same time, the method at large and
certain technical elements will be similar to what was used above in
Markov case. To illustrate similarities and differences between the two
cases in more specific terms we make a few remarks related to the case
of Markov type functions with complex valued weights. Such a situations
may be considered as transitional between the two case indicated above.

Let $\vec f = (f_1, \dots, f_s)$ be a vector function whose components
are defined as Markov-type functions
\begin{equation} \label{M3-1}
f_k(z)\  =\  \int \ \frac { w_k (t) }{z-t} \ dt , \quad z \in
\overline\CC \setminus F_k, \quad F_k =  \supp(\sigma_k), \quad k =1, 2, \dots, s,
\end{equation}
with complex valued weights $w_k(x)$ on disjoint real sets $F_k$, each
of them is a finite union of intervals, with the following conditions
\begin{equation} \label{M3-2}
|w_k(x)| >0 \ \ a.e. \text{ on the interval } \ \ F_k,
 \qquad k =1, 2, \dots, s; \qquad F_i \cap F_j = \varnothing, \ \ i \ne j.
\end{equation}
and there is a closed set $e_k \subset F_k$ such that
\begin{equation} \label{M3-3}
\arg w_k(x) \in C(F_k \setminus e_k), \qquad \cop(e_k) = 0
 \qquad k =1, 2, \dots, s;
\end{equation}
It turns out that generalizations of two parts of the theorem \ref{M-1} to this settings have different status. More exactly, we have the following\\

\begin{thm} \label{M-5}
 Under assumptions \eqref{M3-2} and \eqref{M3-3} on functions $f_k = w_k$ in \eqref{M3-1} assertions \eqref{M-11} (B) and \eqref{M-12} of theorem \ref{M-1} remain valid for first kind Hermite--Pad\'e polynomials associated with $\vec f$.\\
\end{thm}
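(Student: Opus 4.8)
The plan is to reproduce the scheme of the proof of Theorem \ref{M-1}(B), replacing every step that relied on positivity of the weights $w_k$ — and on the a priori knowledge that the zeros of $q_{n,k}$ lie on $F_k$ — by the corresponding complex-weight estimates furnished by the two lemmas underlying the $\GRS$-theorem of \cite{GoRa87}. The structural simplification that keeps us close to the Markov case is that the supports $F_k$ lie on $\R$, so the $S$-property \eqref{14} holds automatically for potentials of (real, positive) measures on $F_k$; hence no max-min step is needed, $\vec\Gamma=\vec F$ exactly as in the real case, and the target limits $\lambda_k$ and constants $w_k$ are the same vector-equilibrium quantities of Lemma \ref{lem1}. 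The single genuinely new analytic ingredient is an integral-asymptotics lemma (of the type developed for general contours in Section 3) that controls possible phase cancellation in integrals such as $\int_{F_k}q_{n,k}w_kP_n\,dx$; condition \eqref{M3-3}, continuity of $\arg w_k$ off a set of capacity zero, is exactly the hypothesis under which it applies.

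Following the real case I would first pass to a subsequence $\Lambda\subset\N$ along which $\mu_{n,k}\to\mu_k$ and $|c_{n,k}|^{1/n}\to e^{-u_k}$, normalized so that $\max_k u_k=0$; the goal is to identify $\vec\mu$ with $\vec\lambda$ and to recover \eqref{M-12}. A preliminary step, automatic in the real case but not here, is to show that each limit $\mu_k$ is supported on $F_k$: since $w_k$ is carried by $F_k$ and $\arg w_k$ is continuous off a polar set, the complex orthogonality \eqref{M6} together with the convergence in capacity of $\frac1n\log|q_{n,k}|$ forces all but $o(n)$ zeros of $q_{n,k}$ into a prescribed neighbourhood of $F_k$, so that $\supp\mu_k\subset F_k$. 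This localization is what makes the cross terms produced by the indices $i\ne k$ manageable in the estimates below.

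The heart of the proof is then the contradiction argument of Theorem \ref{M-1}(B), in two stages. First, taking each index $k$ in turn as the exceptional one, one forms the test polynomial $P_n$ of \eqref{P_n-1}, records the $n$-th root limits $e^{-m_j}$ of the partial integrals $I_{n,j}$ (with $m_j$ as in \eqref{I-1}), and uses $\sum_j I_{n,j}=0$; where the real case invoked positivity of the $I_{n,j}$, $j\ne k$, one now invokes the no-cancellation estimate to get $|I_{n,k}|^{1/n}\to e^{-\min_{j\ne k}m_j}$, whence $m_k\le m_j$ for all $j$. As $k$ is arbitrary this forces all $m_j$ to coincide — this is \eqref{M-12}. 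Second, suppose some component, say $\mu_1$, is not the equilibrium measure in the field $\varphi_1=\tfrac12\sum_{j\ne1}U^{\mu_j}$; choose $x_0\in\supp\mu_1\subset F_1$ and a segment $\Delta=[x_0-r,x_0+r]$ on which $(U^{\mu_1}+\varphi_1)\ge m_0>m_1=\min_F(U^{\mu_1}+\varphi_1)$, pick two zeros $a_n,b_n$ of $q_{n,1}$ in $\Delta$, and build $P_n$ by \eqref{P_n-1} with $\Delta_n=[a_n,b_n]$. The integral lemma gives $\lim|\int_{F\setminus\Delta_n}r_nP_n\,dx|^{1/n}=e^{-2m_1}$, the maximum of the modulus of the integrand attained with no cancellation, whereas $\varlimsup|\int_{\Delta_n}r_nP_n\,dx|^{1/n}\le e^{-2m_0}<e^{-2m_1}$; since orthogonality forces $\int_{F\setminus\Delta_n}r_nP_n\,dx=-\int_{\Delta_n}r_nP_n\,dx$, this is a contradiction. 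Hence each $\mu_k$ is the equilibrium measure in $\varphi_k$, i.e. $\vec\mu$ minimizes the energy $\mc E$ componentwise, and by convexity of $\mc E$ it is then the global minimizer $\vec\lambda$ of Lemma \ref{lem1}, which proves \eqref{M-11}(B).

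The main obstacle is exactly the no-cancellation estimate $\lim|\int_{F\setminus\Delta_n}r_nP_n\,dx|^{1/n}=e^{-2m_1}$ (and its analogue in the first stage): for complex weights the phases of $w_k(x)$ and of the complex polynomial $q_{n,k}(x)$, $x\in\R$, might conspire to make the integral exponentially smaller than the supremum of its modulus, which would collapse the whole scheme. Securing that this does not happen is the content of the two lemmas of \cite{GoRa87}; the favourable feature of the present, transitional setting is that the relevant contours are the fixed real segments $F_k$, whose automatic $S$-property removes the hardest ingredient — the construction of a symmetric contour — that has to be supplied separately in the genuinely complex case of Section 3.
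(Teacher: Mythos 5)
Your overall architecture (subsequence selection, then a two-stage contradiction: first all $m_j$ equal, then each $\mu_k$ is the equilibrium measure in its field, then convexity of the energy) matches the paper's, and you correctly identify phase cancellation as the obstacle. But the way you propose to overcome it does not work, and the paper explicitly warns against it. You keep the real-case test polynomial \eqref{P_n-1}, whose $j$-th factor is $q_{n,j}$ itself, and assert that ``the two lemmas of \cite{GoRa87}'' supply the no-cancellation estimate $|I_{n,j}|^{1/n}\to e^{-m_j}$ for it. For complex weights the polynomials $q_{n,j}$ have complex coefficients, so $q_{n,j}(x)^2 w_j(x)$ oscillates in phase along $F_j$ and the lower bound genuinely fails for this choice of $P_n$; the paper says so verbatim about \eqref{P_n} (``such a low bound will not be valid for them''). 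The lemmas of \cite{GoRa87} (Lemmas \ref{3} and \ref{4} here) give a lower bound only for a differently built test polynomial: one first constructs, via Lemma \ref{3}, a measure $\sigma$ and a point $z_0\in\Gamma^0$ avoiding the polar set of \eqref{M3-3}, at which $U^{\mu+\sigma}+2\varphi$ attains a strict global minimum and is locally $*$-symmetric; one then takes $P_n$ with zeros near $z_0$ at the reflections (here simply complex conjugates, since $F_k\subset\R$) of the zeros of $q_{n,k}$, so that $q_{n,k}P_n\ge 0$ locally, and multiplies by a polynomial $g_n$ of degree $o(n)$ compensating the phase of $w_k$ (Lemma \ref{4}). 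Replacing \eqref{P_n-1} by this construction is the actual content of the proof and is what the paper means by ``following step by step the proof of theorem \ref{thm1}''; it is not a black-box estimate that can be attached to the real-case polynomial.

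Two further gaps. First, your stage one silently assumes the construction goes through when $\mu_k=\lambda_k$; this is an exceptional case (Lemma \ref{lem6} requires $\mu\ne\lambda$, and the substitute Lemma \ref{7} produces polynomials whose degree ruins the bound $\deg G_n\le ns+s-2$ needed to contradict orthogonality), and the paper handles it by perturbing the total masses to $\vec t=(1+t,1-t,\dots)$ so that the fixed unit measure $\mu_k$ can no longer equal the new equilibrium component. You do not address this. Second, your preliminary ``localization'' step ($\supp\mu_k\subset F_k$) is neither proved by your sketch nor needed: the machinery of Section 3 is set up to work with an arbitrary weak limit $\mu_k$ of total mass at most $1$, with no a priori information on the location of the zeros.
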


Theorem \ref{M-5} is not a direct corollary of theorem \ref{thm1} but can be proved following step by step the proof of theorem \ref{thm1} (see section 3 below) and even with significant simplifications.
Situation with 
the second kind Hermite--Pad\'e polynomials is different; in this case we only can state a conjecture.\\

{\bf Conjecture.}
{\it Under assumptions \eqref{M3-2} and \eqref{M3-3} on functions $f_k$ in \eqref{M3-1} assertions \eqref{M-11} (A) of theorem \ref{M-1} remain valid for the second kind Hermite--Pad\'e polynomials associated with $\vec f$.}\\

Conjecture looks very natural and it is confirmed by all known results.
But at the moment it is not clear how it may be proved in its
generality. We also can not prove in full generality the theorem
similar to theorem \ref{thm1} for second kind Hermite Pad\'e
polynomials. Some version of such theorem may be proved but more
restrictive assumptions on separation of singularities of component
functions will be needed. To make one step towards explanation of the
situation, consider a plan of the proof.

Suppose we want to proof theorem \ref{M-5} (and/or the conjecture)  using the same method of reduction of vector case to weighted scalar one. Then we have to begin with a complex version of  the theorem \ref{M-3}. Such a theorem is known.
\begin{thm}\label{M-6}
Under all the assumptions on $F, \  \Phi_n, \ Q_n$ in theorem \ref{M-3} (see \eqref{Field} and \eqref{Ort}) and the condition \eqref{M3-2} - \eqref{M3-3} on $w$ we have
 \begin{equation*} 
\frac1n\  \mc{X}\(Q_n\)=\frac{1}{n}\sum_{Q_n(\zeta)=0}
\delta(\zeta)\ \  {\overset{*}{\to}}\ \  \lambda
\end{equation*}
where $\lambda = \lambda_\varphi$ is the equilibrium measure on $F$ in the external field $\varphi$.
\end{thm}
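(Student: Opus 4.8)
The plan is to establish Theorem \ref{M-6} by repeating, almost verbatim, the contradiction argument used for Theorem \ref{M-3}, and to replace the one place where positivity of the weight entered by a device that neutralises the complex phase of $w$ using condition \eqref{M3-3}. In fact the assertion is essentially the $\GRS$-theorem of \cite{GoRa87} for a real set $F$, and the two basic lemmas of that paper furnish precisely the ingredients that a naive transcription of the real proof is lacking.

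As before, I would pass to a subsequence $\Lambda$ along which $\frac1n\mc{X}(Q_n)\to\mu$ and assume, for contradiction, $\mu\ne\lambda_\varphi$. The first and decisive step, absent in the Hermitian case, is a confinement of the zeros: for a complex weight the zeros of $Q_n$ need not lie on $F$, so a priori $\supp\mu$ could leave $\R$. Estimating the second-kind function (remainder) $R_n(z)=\int_F\frac{Q_n(x)\Phi_n(x)w(x)}{z-x}\,dx=O(z^{-n-1})$ by the first of the two lemmas of \cite{GoRa87}, one obtains that $\supp\mu\subseteq F$ and that the zeros approach $F$ with imaginary parts tending to $0$; in particular $\mu$ is a real measure and $\frac1n\log|Q_n(x)|^2\to-2U^\mu(x)$ on $F$. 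This confinement is what will let the phase be controlled below, and I expect it to be the genuinely delicate point, since it is exactly the a priori information that a positive-weight proof takes for granted.

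Granting $\supp\mu\subseteq F$, the construction proceeds as in Theorem \ref{M-3}. Since $\mu\ne\lambda_\varphi$ the equilibrium conditions \eqref{M17} fail on $F$, so there is $x_0\in\supp\mu$ with $(U^\mu+\varphi)(x_0)>m:=\min_F(U^\mu+\varphi)$, hence a neighbourhood $\Delta=[x_0-r,x_0+r]$ on which $(U^\mu+\varphi)\ge m_1>m$. The one change I would make to the real construction is to use the conjugate polynomial in the numerator: choosing two zeros $\zeta_1,\zeta_2$ of $Q_n$ with real parts near $x_0$, set $P_n=\overline{Q_n}/[(x-\overline{\zeta_1})(x-\overline{\zeta_2})]\in\PP_{n-2}$ and
\begin{equation*}
I_n=\int_F Q_nP_n\Phi_n w\,dx=\int_F\frac{|Q_n(x)|^2}{(x-\overline{\zeta_1})(x-\overline{\zeta_2})}\,\Phi_n(x)w(x)\,dx,
\end{equation*}
which vanishes by orthogonality \eqref{Ort}. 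The advantage over the factor $Q_n^2$ of the real proof is that the bulk $|Q_n|^2\Phi_n$ is now manifestly nonnegative; by Step 1 the dropped factors $(x-\overline{\zeta_1})(x-\overline{\zeta_2})$ are asymptotically real and positive on $F\setminus\Delta$ (their imaginary parts tend to $0$), so the only surviving oscillation is that of $w$. The subdominant estimate $\varlimsup_{n\in\Lambda}|I_n(F\cap\Delta)|^{1/n}\le e^{-2m_1}<e^{-2m}$ holds exactly as before, using only the bound on $|w|$ from \eqref{M3-2}.

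The crux is then the lower bound $|I_n(F\setminus\Delta)|^{1/n}\to e^{-2m}$. The integral concentrates near the point $x^\ast\in F\setminus\Delta$ where $E(x)=\exp\{-2(U^\mu+\varphi)(x)\}$ attains its maximum $e^{-2m}$; since $\cop(e)=0$ and $E$ is continuous, $x^\ast$ may be chosen outside the exceptional set $e$ of \eqref{M3-3}, where $\arg w$ is continuous and hence nearly constant on a small neighbourhood of $x^\ast$. On that neighbourhood the integrand has modulus of exponential order $E(x)^n$ and phase essentially equal to the constant $\arg w(x^\ast)$, so no cancellation occurs and the local contribution is of exact order $e^{-2mn}$. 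Comparing the two parts gives $I_n\ne0$ for large $n\in\Lambda$, contradicting orthogonality and forcing $\mu=\lambda_\varphi$. The same scheme, with the single modification just described, therefore yields Theorem \ref{M-6}; the harder work is entirely in Step 1, which is why the complex theory of \cite{GoRa87} is needed where elementary positivity sufficed in the real case.
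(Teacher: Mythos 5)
Your proposal has two genuine gaps, both at the places you yourself flag as delicate. First, the ``confinement'' step. You assert that an estimate of the remainder via a lemma of \cite{GoRa87} yields $\supp\mu\subseteq F$ and that the zeros approach $F$; no such a priori localization is available for non-Hermitian orthogonality, and none of the lemmas of \cite{GoRa87} delivers it. Localization of the zeros is part of the \emph{conclusion}, not a preliminary fact: this is exactly why the machinery the paper invokes (Lemmas \ref{3}--\ref{lem6} of Section 3) is built to work with an arbitrary limit measure $\mu$ in the plane with $|\mu|\le 1$, some zeros possibly escaping to infinity or to points off $F$. Since your Step 1 is what makes the factors $(x-\overline{\zeta_1})(x-\overline{\zeta_2})$ ``asymptotically real and positive'' and makes $Q_n\overline{Q_n}=|Q_n|^2$ on $F$ useful, the rest of the argument has no foundation without it.

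Second, and independently, the lower bound fails as stated. The set where $E(x)=\exp\{-2(U^\mu+\varphi)(x)\}$ attains its maximum over $F\setminus\Delta$ is the minimum set of $U^\mu+\varphi$; it is determined by $\mu$ and $\varphi$, need not be a single point (it can be a continuum, or lie inside the exceptional set $e$ of \eqref{M3-3}), and you do not get to ``choose'' it off $e$. If $\arg w$ oscillates on that set, the integral $\int E_n^n\,w$ can cancel and no lower bound of order $e^{-2mn}$ follows. This is precisely the obstruction the paper's method is designed to remove: as the text states for Theorem \ref{M-6}, one must construct $P_n$ so that $|Q_nP_n|\Phi_n$ has a \emph{single sharp} maximum at a point $z_0$ chosen in advance where $\arg w$ is continuous. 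Concretely, Lemma \ref{3} builds the limit measure $\sigma$ of $P_n$ by adding a small measure $\eta_1$ with $U^{\eta_1}=+\infty$ on the exceptional set (to push the minimum off $e$) and a small measure $\eta_2$ whose potential has a strict minimum at $z_0$ (to pin the minimum at one regular point, property (v)), and performs the $*$-symmetrization of $\mu$ only \emph{locally} near $z_0$ (property (ii)); Lemma \ref{4} then realizes this with polynomials. Your global conjugation $P_n=\overline{Q_n}/(\cdots)$ captures the symmetrization idea but omits the two auxiliary measures, which are the actual content of Stahl's device. Without them neither the choice of $z_0$ off $e$ nor the single-peak localization is justified.
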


Again, theorem \ref{M-6} is not a direct corollary (nor a particular
case) of more general theorem \ref{thm3-1} which we discuss in the next
section 3 but can be proved following same path. We outline briefly
some elements of this path which would, in particular, make an
introduction to the settings and proof of theorem \ref{thm3-1}.

The proof begins exactly like the one of theorem \ref{M-3}: we select a weakly convergent subsequence $\quad \frac1n\  \mc{X}\(Q_n\)  \to  \mu \quad$ where $\Lambda= \{n_k\}_{k=1}^\infty \subset \N. $   Now it is enough to show that $\mu = \lambda$.

Assuming the contrary $\mu \ne \lambda$ we will construct a sequence of
polynomials $P_n$ which leads to a contradiction with orthogonality
conditions \eqref{Ort}. In the proof of theorem \ref{M-3} we have
defined required polynomials $P_n$ by a simple explicit formula
\eqref{P_n}. Then we determined asymptotics of the integral involving
these polynomials
 \begin{equation*} 
 I_n =  \int _{F} Q_n(x) P_n(x)\Phi_n(x) w(x) dx
\end{equation*}
and derived from this asymptotics that $I_n \ne 0$ for large enough $n$ in contradiction with \eqref{Ort}.

The difference with the real case comes at this point. Assuming $\mu
\ne \lambda$ we have to construct polynomials  $P_n$ in such a way that
a lower bound for $|I_n|$ may be obtained. For complex weights $w$ it
will not be possible to use for this purpose polynomials $P_n$ in
\eqref{P_n} (such a low bound will not be valid for them). The method
of constructing $P_n$ for complex $w$ essentially modified. Loosely
speaking, we will construct $P_n$ in such a way that the product
$|Q_n(x) P_n(x)|\Phi_n(x)$ have a single sharp local maximum of at a
point $x_0$ where $w(x_0) >0$ and $\arg w$ is continuous. This idea (it
goes back to H.~Stahl) is the central point of the method. Formal
implementation of this plan is rather cumbersome; we postpone
discussion of the details until next section.


Important detail in the hypothesis of the theorem  \ref{M-5}  is that the assumption \eqref{Field} on the variable part $\Phi_n$ of the orthogonality weight  remain the same as in theorem \ref{M-3};  we assume that $\Phi_n(x) > 0$ (while assumptions on $w$ are more general).  Assertion of the theorem will not be valid for complex-valued $\Phi_n(x)$ which would lead a nonsymmetric $\varphi$ in \eqref{Field}; union of intervals $F \subset \R$ will not have the $S$-property in such an external. f

This explains also the reason why the conjecture above on the
asymptotics of second kind Hermite--Pad\'e polynomials is difficult to
prove using methods which were effective in the proof of theorem
\ref{M-3}. In the process of reduction of vector-orthogonal polynomials
to weighted scalar ones we introduce weights $\Phi_{n,k}(x) = \prod_{j
\ne k}P_{n,j}(x.)$ For for complex valued weights we loose any {\it a
priori} information on the location of zeros of polynomials $P_n =
\prod_{j =1}^s P_{n,j}(x)$ and their possible limit distributions.
Therefore, we can not assert that $F \subset \R$ has $S$-property in
the field $\varphi$ defined in \eqref{Field}.

The first kind Hermite Pad\'e polynomials turns to be technically simpler because of the specific structure of their orthogonality conditions \eqref{M6}. Asymptotics of the sum of integrals is reduced to the asymptotics of one of them. In each integral $\int_{F_k}$ the other polynomials $q_{n,j}$ with $j \ne k$ are not involved like they are involved in case of second kind polynomials. This would allow us to use in case complex weights the same arguments we have used in the real one.

Situation is similar for the case $\vec f \in \mc A$ and we will be able to prove a theorem on zero distribution for a general Angelesco case for first kind polynomials.




\section{Proof of theorem \ref{thm1}}

Now we turn to the problem of the limit zero distribution for complex (nonhermittian) orthogonal polynomials with analytic weights in complex plane.
One of the most general results in this direction
is the theorem 3 in \cite{GoRa87} (so-called $\GRS$-theorem), In this section we present a somewhat modified version of the theorem which is, then, used to the study of Hermite--Pad\'e polynomials. In the next section we discuss the original version of the theorem.

\subsection{The $\GRS$-theorem.}

The theorem in the original form, characterizes the limit zero distribution as $n\to \infty$ for a sequence of polynomials $Q_n(z) \in \P_n$ defined by the following complex orthogonality relations
\begin{equation}
\label{3-1}
\oint_\Gamma Q_n(z)z^k \Phi_n (z) f(z) dz=0,\quad k=0,1,\dots,n-1;
\end{equation}
with a variable (depending on $n$) weight function $w_n(z) =  \Phi_n
(z) f(z)$ where integration in $\oint_\Gamma$ goes over a cycle
surrounding compact set $\Gamma.$ Formal  definition of $\oint_\Gamma $
will be presented after we introduce conditions on $\Gamma$ and $f$
below.

Hypotheses of the theorem include  several conditions on compact set $\Gamma$, sequence of the functions $ \Phi_n (z) $ analytic on $\Gamma$ and a function $f$ analytic in a domain surrounding $\Gamma$. These conditions remain unchanged in the new version of the theorem. \\

{\bf Condition 1.}  The first condition is convergence of (analytic) functions $\Phi_n (z)$
Let $\Phi_n \in H(\mc O)$ where $\mc O$ is an open set contained in the disc $\{z:\  |z| < 1/2\}$ and
for the sequence of functions $ \Phi_n (z)$ holomorphic in $\mc O$ we have
uniformly on compact sets in $\mc O$ we have convergence
\begin{equation}
\label{3-2}
\frac 1{2n}\ \log \frac 1{|\Phi_n(z)|}\ \to \ \   \varphi(z)
\end{equation}
to a real function $\varphi(z)$ harmonic in each connected component of $\mc O.$

We note that the condition $\mc O \subset\{z:\  |z| < 1/2\}$ is technical. It can be easily replaced by  $\mc O \subset\{z:\  |z| < 1\}$ but if it is entirely omitted we should discuss more carefully $n$-th root asymptotics of spherical normalization of polynomials in terms of spherically normalized potentials.\\

{\bf Condition 2.} Compact $\Gamma $ in \eqref{3-1} belongs to  $ \mc O$ and has the $S$-property in the external field $\varphi$ from \eqref {3-2} (which is explained in the next definition \ref{def1}).
Moreover, the complement to the support of equilibrium measure $\lambda=\lambda_{\varphi, \Gamma}$ is connected.

We note that in the case of multipoint Pad\'e
approximants the complement
to the extremal compact set is not necessary a domain, i.e., it might
consists of several domains; see~\cite{BuMaSu12},~\cite{Bus13},~\cite{Bus15}.
In the case of Hermite--Pad\'e polynomials
for the general complex Nikishin systems the situation is very similar
to the multipoint Pad\'e approximants;
see~\cite{MaRaSu12},~\cite{RaSu13},~\cite{Sue15}.


\begin{definition}\label{def1}
Let $\mc O$ be an open set and a real function $\varphi(z)$ be harmonic
in each connected component of $\mc O.$ We say that a compact set
$\Gamma \in \mc F$ has an $S$-property relative to external field
$\varphi$ if there exist a set $e \subset \Gamma$ of zero capacity such
that for any $\zeta\in \Gamma\setminus e$ there exist a neighborhood
$D=D(\zeta)$ of $\zeta$ such that $\supp(\lambda)\cap D$ is an analytic
arc and, furthermore, we have

\begin{equation}
\label{3-2n}
\frac{\partial}{\partial n_1}\(V^\lambda+\varphi\)(\zeta)
=\frac{\partial}{\partial n_2}\(V^\lambda+\varphi\)(\zeta),\quad \zeta\in \Gamma^0 = \supp\lambda \setminus e
\end{equation}
where $\lambda=\lambda_{\varphi, \Gamma}$ is the equilibrium measure for $\Gamma$ in $\varphi$ and $n_1, n_2$ are two oppositely directed normals to $\Gamma$ at $\zeta\in \Gamma$ (we can actually admit that  $\varphi$ has a small singular set included in $e$).
\end{definition}

We will call set $\Gamma^0 = \supp(\lambda) \setminus e$ the regular part of $\Gamma^1 = \supp(\lambda) $. Clear that the regular part is finite or countable union of disjoint open analytic arcs.  \\

{\bf Condition 3} on the function $f$ will be stated as $f \in \mc
H_0(\mc O \setminus  \Gamma)$ where by $\mc H_0(\mc O \setminus
\Gamma)$ we denote the class of all functions $f \in H(\mc O\setminus
\Gamma)$ which also have the following property. There exists a set
$e_0 \subset \Gamma$ off zero capacity such that for any arc $\gamma
\subset \Gamma \setminus (e \cup e_0)$ function $f$ has continuous
extension to $\gamma$ from both sides of the arc and the difference  of
boundary values of $f$ does not have zero on $\gamma$.

Thus, $\Gamma$ is the set of singularities of $f$ and by $\oint_\Gamma$ in
\eqref{3-1} we understand integration over a union $C = \cup C_j$ of
mutually exterior piecewise smooth Jordan contours $C_j$  containing all
the singularities of $f$ in the union of their interiors $ \Gamma \subset
\bigcup_j \myint C_j $.

Now we state the theorem 3 from \cite{GoRa87} ($\GRS$-theorem).
\begin{thm}\label{thm3-1} Let polynomials $Q_n(z) \in \P_n$ be defined by orthogonality relations \eqref{3-1} and conditions 1, 2 and 3 respectively on $ \Phi_n (z), $ $\Gamma$ and $f$ are satisfied.
Then the following assertions hold

$(i)$
\begin{equation*} 
\frac1n\,\mc{X}\(Q_n\)\overset{*}{\ \to\ }\lambda, \qquad \text{as} \quad
n \to \infty
\end{equation*}
where $\lambda= \lambda_{\varphi, \Gamma}$.\\

$(ii)$ If $Q_n$ are spherically normalized then we have convergence in capacity
\begin{equation*} 
\left|\oint_\Gamma \frac {Q^2_n(z)}{\zeta - z} \Phi_n (z) f(z) dz\right|^{1/n} \ \to \ e^{ - 2w_\varphi}
\end{equation*}
where $w_\varphi$ is the equilibrium constant  \eqref{M7} for $\Gamma$
and $\varphi$ for spherically normalized total potential of $\lambda$.
\end{thm}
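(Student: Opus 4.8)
The plan is to prove (i) by contradiction, transplanting the scheme of Theorems~\ref{M-3} and~\ref{M-6} to the general planar contour $\Gamma$, whose $S$-property (Condition~2) now plays the role that symmetry with respect to $\R$ played in the Markov case. By weak-$*$ compactness I would extract a subsequence $\Lambda \subset \N$ along which $\frac1n \mc{X}(Q_n) \to \mu$; it then suffices to show $\mu = \lambda$, where $\lambda = \lambda_{\varphi,\Gamma}$. Suppose $\mu \neq \lambda$. Then, by uniqueness of the equilibrium measure, $\mu$ cannot satisfy the equilibrium characterization~\eqref{M17}; concretely there is a point $\zeta_0 \in \supp\mu$ with $(U^\mu + \varphi)(\zeta_0) > m := \min_\Gamma(U^\mu + \varphi)$, and by lower semicontinuity of the potential this persists on a small disc $D = D(\zeta_0)$, where $U^\mu + \varphi \geq m_1 > m$.

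The core of the argument is the construction of a test polynomial $P_n$ with $\deg P_n \le n-1$ for which a strictly positive lower bound on $\bigl|\oint_\Gamma Q_n P_n \Phi_n f\,dz\bigr|^{1/n}$ can be extracted, contradicting the orthogonality~\eqref{3-1}. In the real case (Theorem~\ref{M-3}) one simply sets $P_n = Q_n/((z-a_n)(z-b_n))$ with $a_n, b_n$ two zeros of $Q_n$ in $D$; after collapsing the cycle onto both sides of the regular part $\Gamma^0 = \supp\lambda \setminus e$ (legitimate by Condition~3, which furnishes a nonvanishing jump $f_+ - f_-$), the integrand has $n$-th root modulus converging in capacity to $\exp(-2(U^\mu + \varphi))$, whose maximal value $e^{-2m}$ is attained outside $D$ while on $D$ it is at most $e^{-2m_1} < e^{-2m}$; since for a positive weight the integrand keeps a fixed sign, the contribution of $\Gamma^0 \setminus D$ dominates and the full integral is nonzero. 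For a complex, oscillating weight this sign argument fails, and here the Stahl mechanism enters: I would instead choose $P_n$ (equivalently, its limiting zero distribution $\nu = \lim \frac1n \mc{X}(P_n)$ of mass at most $1$) so that the collapsed modulus $|Q_n P_n \Phi_n|$ develops a single strict maximum at one well-chosen point $\zeta_1 \in \Gamma^0$ where $\arg(f_+ - f_-)$ is continuous and nonzero. This is possible precisely because $\mu \neq \lambda$ keeps the relevant total potential from being flat, so that a strict, isolated peak can be singled out; across such a peak the phase of $Q_n^2 \Phi_n (f_+ - f_-)$ varies slowly, no cancellation occurs, and a Laplace-type estimate localized at $\zeta_1$ gives $\bigl|\oint_\Gamma Q_n P_n \Phi_n f\,dz\bigr|^{1/n} \to c_1 > 0$, the desired contradiction. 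This proves $\mu = \lambda$, hence (i).

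I expect the main obstacle to be exactly this lower bound for the integral under a complex weight. The two basic lemmas revisited from~\cite{GoRa87} are what make it rigorous: one supplies the in-capacity convergence $U^{\mc{X}(Q_n P_n)/n} \to U^{\mu + \nu}$ together with the semicontinuity of minima over regular compacts, and the other guarantees that the sculpted modulus genuinely concentrates at $\zeta_1$, so that the phase contributes only a subexponential factor. The $S$-property~\eqref{3-2n} is indispensable on two counts: it certifies that a favorable peak location $\zeta_1$ is available on $\Gamma^0$, and, because the opposite normal derivatives of $U^\lambda + \varphi$ coincide there, it ensures that the two one-sided contributions of the jump $f_+ - f_-$ reinforce rather than cancel. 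Without it the collapsed integral could vanish even while its modulus peaks, which is precisely the difficulty noted after Theorem~\ref{M-5} for the second-kind polynomials.

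For part (ii) I would first reduce the integral to a product by orthogonality. Writing $\frac{Q_n^2(z)}{\zeta - z} = Q_n(z)\bigl(\frac{Q_n(z) - Q_n(\zeta)}{\zeta - z} + \frac{Q_n(\zeta)}{\zeta - z}\bigr)$ and observing that the first summand is $Q_n(z)$ times a polynomial in $z$ of degree $n-1$, relation~\eqref{3-1} annihilates its contribution and leaves
\begin{equation*}
\oint_\Gamma \frac{Q_n^2(z)}{\zeta - z}\Phi_n(z) f(z)\,dz = 2\pi i\, Q_n(\zeta)\, \rho_n(\zeta), \qquad \rho_n(\zeta) = \frac{1}{2\pi i}\oint_\Gamma \frac{Q_n(z)\Phi_n(z) f(z)}{\zeta - z}\, dz.
\end{equation*}
By part (i) and the spherical normalization, $\frac1n \log|Q_n(\zeta)| \to -U^\lambda(\zeta)$ in capacity off $\Gamma$, while the second-kind function $\rho_n$ carries the complementary $n$-th root behaviour; combining the two through the equilibrium condition~\eqref{M17} and the $S$-property (which excludes any $\zeta$-dependent defect measure) gives $\frac1n \log|Q_n \rho_n| \to -2w_\varphi$ with no residual dependence on $\zeta$. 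Hence $\bigl|\oint_\Gamma \frac{Q_n^2(z)}{\zeta - z}\Phi_n f\,dz\bigr|^{1/n} \to e^{-2w_\varphi}$ in capacity, as asserted.
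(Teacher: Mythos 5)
Your argument for part (i) --- weak-$*$ compactness, contradiction with $\mu\ne\lambda$, and a test polynomial $P_n$ sculpted so that $|Q_nP_n\Phi_n|$ develops a single strict peak at a regular point of $\Gamma$ where the jump of $f$ does not vanish, with the $S$-property supplying the reflection symmetry that prevents phase cancellation --- is exactly the scheme the paper follows, which it makes rigorous by importing Lemmas~8 and~9 of \cite{GoRa87} (restated here as Lemmas~\ref{3} and~\ref{4}). The paper only outlines part (i) and does not prove part (ii) at all; your reduction of (ii) to $Q_n(\zeta)\rho_n(\zeta)$ via orthogonality is the standard one, though the claimed $n$-th root asymptotics of the second-kind function $\rho_n$ would itself require the same peak-construction machinery for its lower bound.
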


\subsection{Outline of the proof of theorem 8. Basic lemmas.}

We are interested here in the proof of part $(i)$. It begins with the
selection of a weakly convergent subsequence $\quad \frac1n\
\mc{X}\(Q_n\)  \to  \mu \quad$ where $n \to \infty,$
 $n \in \Lambda= \{n_k\}_{k=1}^\infty \subset \N. $
Now it is enough to prove that $\mu = \lambda$.  Assume the contrary $\mu
\ne \lambda$. Under this assumption we will construct a sequence of
polynomials $P_n$ with $\deg P_n <n$ such that
$$
\biggl|\oint_\Gamma Q_n(z)P_n(z) \Phi_n (z) f(z) dz\biggr|^{1/n} \to c > 0
$$
in contradiction with orthogonality
relations.

The same approach has been used in the proof of theorem \ref{M-3} above.
The construction of the sequence $P_n$ now is different. The procedure is
rather long and made in several steps.

First, starting with a given measure $\mu$ (representing a hypothetical zero distribution of orthogonal polynomials) we construct a measure $\sigma$ which will later play role of the limit zero distribution for the sequence $P_n.$ Properties of this measure $\sigma$ are listed in the
lemma 3 below which asserts existence for a measure with required properties.

To state the lemma we need to mention one important property of
compact sets with $S$-property in a harmonic field -- existence of a
reflection function $z \mapsto z^*$ in a neighborhood $\mc U(z_0)$ of any
regular points $z_0 \in \Gamma^0$. Actually, to assert existence of
such function we need only local analyticity of $ \Gamma^0.$ It follows
from analyticity of $\Gamma$ at $z_0$ that there a neighborhood  $\mc
U(z_0)$ of this point may be selected such that there is a conformal
mapping $\psi: \mc U(z_0) \to D_1 = \{z: |z| <1\}$ with $\psi(z_0) = 0$
and $\psi(\Gamma_0 \cup \mc U(z_0)) = (-1,1).$ Then for $z \in \mc
U(z_0)$ we define $z^* = \psi^{-1}(\overline {\psi(z)})$.

Thus, $z \mapsto z^*$ is anticonformal mapping similar to complex
conjugation. In particular, $z^* = z$ for points on $\Gamma_0$. By $\mc
U(z_0)$ we will denote $*$-symmetric neighborhoods of points $z_0\in
\Gamma_0.$ For a set $E \subset \mc U$ we denote $E^* = \{z^*: \ z \in
E\}.$ For any measure $\nu$ on $\mc U$ we denote by $\nu^*$ the reflected
measure defined by $\nu^*(E) = \nu(E^*), \quad E \subset \mc U.$ 
\cite{GoRa87} for further comments).

\begin{lemma}\label{3}
Let $\varphi$ be a harmonic function in an open set $\mc O\subset \CC$, a compact set $\Gamma \subset \mc O$ has $S$-property in the field $\varphi$ and the complement to the support of $ \lambda = \lambda_{\varphi, \Gamma}$ is connected (that is, condition 2 for $\Gamma$ is satisfied).

Suppose $\mu$ is a measure in plane with $|\mu| \leq 1$ and $\mu \ne
\lambda.$ Then there exists $\ol r = \ol r (\mu) \in (0,1)$, such that
for any $r \leq \ol r$, any positive measure $\eta$ with $|\eta| \leq
r$ with the support in $\CC \setminus \ol {\mc O}$, any $\varepsilon \in
(0, 1-r)$ and any compact set $e \subset \Gamma$ of capacity zero there
is a point $z_0 \in \Gamma^0\setminus e$, its $*$-symmetric
neighborhood $\mc U = \mc U(z_0) \subset D_\varepsilon(z_0)$ and a
positive measure $\sigma = \sigma(\mu, \eta)$ with the following
properties

(i) \quad $|\sigma| = 1-r + |\eta| + \varepsilon\quad $ 

(ii) \quad $\sigma|_{\mc U} = (\mu|_{\mc U})^*$

(iii) \quad $U^\sigma(z) = +\infty \quad $ for $\quad z \in e$.

(iv) \quad $(U^{\mu + \sigma} + 2\varphi)(z^*) = (U^{\mu + \sigma} +2 \varphi)(z)$
\quad  for \quad $z \in \mc U$

(v) \quad $(U^{\mu + \sigma} + 2\varphi)(z_0)  < (U^{\mu + \sigma} + 2\varphi)(z)$ \quad for \quad $z \in \Gamma \setminus \{z_0\}.$

(vi) \quad 
total variation of the signed measure $\sigma - (r\lambda+ \eta)$ is at most $\varepsilon$.
\end{lemma}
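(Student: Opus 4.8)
The plan is to construct the measure $\sigma$ explicitly as a perturbation of the "target" measure $r\lambda + \eta$, using the reflection map $z \mapsto z^*$ at a well-chosen regular point $z_0$ to enforce the local symmetry conditions (ii) and (iv), and then to verify that the global minimum condition (v) can be arranged at that same point. I will first isolate $z_0$ from the potential-theoretic defect $\mu \neq \lambda$: since $\mu$ violates the equilibrium conditions for $\lambda_{\varphi,\Gamma}$, the total potential $U^{\mu} + \varphi$ cannot be constant on $\supp\lambda$ in the way the equilibrium potential is; comparing $U^{\mu}$ against $U^{r\lambda + \eta}$ and using the $S$-property (so that $V^\lambda + \varphi$ has matching normal derivatives, hence the reflected potential behaves symmetrically), I expect to locate a point $z_0 \in \Gamma^0 \setminus e$ where the combined potential $(U^{\mu+\sigma} + 2\varphi)$ attains a strict global minimum over $\Gamma$. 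The threshold $\bar r(\mu)$ enters precisely here: $r$ must be small enough that the contribution $r\lambda + \eta$ does not swamp the defect coming from $\mu \neq \lambda$.

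Next I would build $\sigma$ in three pieces. On the symmetric neighborhood $\mc U = \mc U(z_0)$ I set $\sigma|_{\mc U} = (\mu|_{\mc U})^*$, which forces (ii) directly; because $z^*=z$ on $\Gamma^0$ and the weight $\varphi$ satisfies the $S$-property reflection relation, the pair $(\mu + \sigma)$ will be $*$-symmetric near $z_0$, yielding (iv). Outside $\mc U$, I distribute the remaining mass so that $\sigma$ agrees with $r\lambda + \eta$ up to the small total-variation error $\varepsilon$, giving (vi); the total mass bookkeeping $|\sigma| = 1 - r + |\eta| + \varepsilon$ of (i) is then a matter of accounting for the mass placed in $\mc U$ versus the ambient $r\lambda + \eta$. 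Condition (iii), $U^\sigma = +\infty$ on the capacity-zero set $e$, is arranged by superimposing on $\sigma$ a small atomic or logarithmically-singular component supported on (a countable dense subset of) $e$; since $\cop(e) = 0$ this can be done with arbitrarily small mass, so it disturbs neither (i) nor (vi) by more than $\varepsilon$, and does not affect the strict minimum at $z_0 \notin e$.

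The verification of (v), the \emph{strict} global minimum at $z_0$, is where I expect the real work to lie. The idea is that $(U^{\mu+\sigma} + 2\varphi)$ is, by construction, locally symmetric under $*$ near $z_0$ (from (iv)), so along $\Gamma^0$ it has a critical point there; the $S$-property of $\Gamma$ guarantees that on the rest of $\supp\lambda$ this potential stays at or above its equilibrium value, and the controlled perturbation by $r\lambda + \eta$ (with $\eta$ supported \emph{off} $\ol{\mc O}$, hence contributing a harmonic, hence locally minimizable, piece near $\Gamma$) lets one push the value at $z_0$ strictly below the value elsewhere on $\Gamma$. Making this simultaneously compatible with the prescribed neighborhood inclusion $\mc U \subset D_\varepsilon(z_0)$ and with the freedom to avoid the arbitrary capacity-zero set $e$ is the delicate balancing act.

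The hardest step is thus \textbf{choosing $z_0$ and calibrating $\bar r$ so that (v) holds as a strict inequality uniformly over the allowed $\eta$, $\varepsilon$, and $e$}: one must quantify how far $U^{\mu+\sigma}+2\varphi$ dips below the equilibrium level at $z_0$ relative to the $S$-property floor elsewhere, and show this margin survives the small perturbations. I would extract the needed margin from the strict defect $\mu \neq \lambda$ via the variational characterization of $\lambda_{\varphi,\Gamma}$ in \eqref{M17}, and then invoke the semicontinuity properties of potentials of weakly convergent measures (as used in the proof of Theorem~\ref{M-3}) to control the perturbation terms. All other conditions (i)--(iv) and (vi) are, by comparison, constructive bookkeeping once $z_0$ and the three-piece decomposition of $\sigma$ are fixed.
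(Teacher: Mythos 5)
Your overall architecture for $\sigma$ --- the bulk $r\lambda+\eta$, a reflected copy $(\mu|_{\mc U})^*$, and a small Evans-type measure carried by the capacity-zero set $e$ --- does match the construction that the paper imports from Lemma 9 of \cite{GoRa87}, and (i)--(iii) and (vi) are indeed bookkeeping once that decomposition is fixed. The first genuine gap is in (v). A strict minimum of $U^{\mu+\sigma}+2\varphi$ over $\Gamma$ at the \emph{single} point $z_0$ cannot be extracted from the defect $\mu\ne\lambda$ together with the $S$-property, nor from the perturbation $r\lambda+\eta$: the equilibrium block contributes a quantity that is constant on $\supp\lambda$, and $\eta$ is an \emph{arbitrary} measure supported off $\ol{\mc O}$, hence merely harmonic near $\Gamma$ and not at your disposal to create a dip at $z_0$; generically the minimum over $\Gamma$ is attained on a whole set. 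The actual construction first chooses $z_0$ as a minimum point over $\Gamma$ of $U^{\mu+c\eta_1}+\varphi$ (the Evans component $\eta_1$ forcing $z_0\notin e$ and $z_0\in\Gamma^0$), and then \emph{adds to $\sigma$ a further small positive measure $\eta_2$ whose potential has a strict minimum over $\Gamma$ exactly at $z_0$}; this component is what makes (v) strict, and it is absent from your three-piece decomposition. Relatedly, the role of $\mu\ne\lambda$ and of the threshold $\ol r(\mu)$ is not to supply a ``margin below the equilibrium level'' at some point, but to guarantee --- via the comparison function $U^{\mu-\lambda}$, which is superharmonic and nonconstant off $\supp\lambda$, and the minimum principle on the connected complement --- that the infimum of the relevant total potential over $\Gamma^1$ stays strictly below its infimum over $\partial\mc O$ after perturbations of total mass up to $1-\ol r$; this is what keeps the minimum point on $\Gamma$ at all and is where Condition 2 enters.

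The second gap concerns (iv), which is an exact identity for the \emph{full} potential on $\mc U$. Setting $\sigma|_{\mc U}=(\mu|_{\mc U})^*$ only symmetrizes the part of the measure lying inside $\mc U$. The $S$-property together with the equilibrium condition does give $(U^\lambda+\varphi)(z^*)=(U^\lambda+\varphi)(z)$ near $\Gamma^0$, so the $r\lambda$ block (with its share of $\varphi$) is $*$-symmetric; but the harmonic contributions of $\mu|_{\CC\setminus\mc U}$, of the arbitrary $\eta$, and of the components on $e$ are not, and must be cancelled by one more small corrector measure concentrated near $z_0$ --- the fourth component of $\nu$ in the paper's proof. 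Without it (iv) fails, and with it fails the even-multiplicity symmetry of the zeros of $Q_n\tilde P_n$ on which the application of Lemma \ref{4} inside Lemma \ref{lem5} depends.
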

\begin{proof}
Hypotheses and assertions (i) - (v) of lemma 3 are in essence identical
to those in the lemma 9, p. 340 in \cite{GoRa87} and the construction
of the proof does not require significant modifications. Still,
technical changes have to be made. To place them in a proper context we
make a few comments on the structure of the proof.

We mention first a few differences in notations. The external field was
denoted $\psi$ in lemma 9 and $S$-compact set in this field was denoted by
$F$ (whose neighborhood where $\psi$ is defined was $U$); $\Gamma$ was
the support of equilibrium measure $\lambda.$ Notations for a potential
of a measure $\mu$ was by $V^\mu$. Here we use our notations ($\varphi$
for the field, $\Gamma$ for the $S$-compact set $\mc O$ for its
neighborhood and so on). There are few more minor differences which
should not cause problems.

The main modification in the lemma is the assertion (vi) meaning that
an arbitrary small enough positive measure $\eta$  may be included as
part of $\sigma.$ Measure  $\sigma$ is explicitly constructed in the
process of the proof of lemma 9 in \cite{GoRa87} in such a way that
only an arbitrary small part of the constructed measure $\sigma$ is not
known explicitly.
The new property (vi) of the measure $\sigma$ means that this measure may be constructed in such a way   that it has a form $\sigma = r\lambda +\eta+\nu$ where $\eta$ is an arbitrary positive measure with $|\eta| < 1 - r$ and $r \leq \ol r(\mu) <  1.$

The additional (signed) measure $\nu$ may be made arbitrary small in variation. We note that $\sigma$ is a positive measure, but $\nu$ is generally not. Measure $\nu$ is essentially constructive too, but it plays a technical role and its form is not really important. In lemma \ref{5} below we will show that it is possible to get rid of this part passing to a limit.

To explain the way the measure $\eta$ is incorporated in $\sigma$ we reproduce the beginning of the proof of lemma 9. The parameters in the proof of lemma 9 appear in the following order. First, for a given measure $\mu$ and a function $W$ is introduced by the formula
\begin{equation*} 
W(z)  = W(z; t, \eta) = U^{\mu +t\eta}(z) +(1+2t)\varphi(z) - (1-2t) (U^\lambda + \varphi)(z)
\end{equation*}
where $\eta$ is an arbitrary unit measure  and $t \in [0,1/2]$.  Then two associated functions are defined
\begin{equation*} 
w_0(t, \eta)   = \inf_{\Gamma^1} W(z; t, \eta), \qquad  w_1(t, \eta)   = \inf_{\partial \mc O} W(z; t, \eta)
\end{equation*}
Since $ w_0(0, \eta) < w_1(0, \eta)$ (note that this quantities do not
depend on $\eta$) it is concluded that there is $t \in (0,1/2)$ such that
$ w_0(t, \eta) < w_1(t , \eta)$  for any unit measure $\eta$ with the
support in $\ol {\mc O}$ ($t$ may be made smaller). Value of $t$ is fixed
from this point on.

Then the procedure of selection of $\eta =(1/2) (\eta_1 +\eta_2)$  is described as follows. Unit measure $\eta_1$ is selected on $e$ such that  $U^{\nu_1} = +\infty$ on $e$. Then a point $z_0 \in \Gamma^0$ is selected where $U^{\mu+ (t/2)\eta_1} + \varphi $ assumes the minimum value over $\Gamma$. Since $U^{\mu+ (t/2)\eta_1} + \varphi  = +\infty$ on $e$, the  point $z_0$ is interior point of one of the open analytic arcs of $\Gamma^0.$

Then a unit measure $\eta_2$ is selected whose potential $U^{\nu_2}(z)$ has strict minimum over $\Gamma$ at the point $z_0$ 

Now, as part of the proof of lemma \ref{3} the construction of $\eta$
has to be modified as follows. First, we define $\ol r  = 1- t$ with
the above choice of $t$.   Then we select arbitrary unit measure with a
compact set support in  $\CC \setminus \ol {\mc O}$. From this point on we
follow the procedure of proof of lemma 9. At this point it would be
convenient to modify notations. The parameter $t$ is already renamed to
$1 - r$ and this number is fixed. The total mass of the other
components of $\sigma = r\lambda +\eta+\nu$ which we denote by $\nu$
just need to be small. We fix a positive  $\varepsilon < (1 - r)/4$.

Totally we will have 4 components in measure $\nu$. Two of them $(t/2)
(\eta_1 +\eta_2)$ selected above have to be renormalized so that total
masses are $\leq \varepsilon$. The third component (denoted $\nu$ in
lemma 9) is defined as $(\mu|_{\mc U})^*$ (see (ii)). Choice of $\mc U$
allow us to make it small. Finally the last component is also supported
in an arbitrary small neighborhood of $z_0$ and also may be made
arbitrary small.
\end{proof}

Next, we state as lemma \ref{4} another result from \cite{GoRa87} (lemma 8, p. 337). We use a few different notations here. In particular, our $\mc O, \Phi_n (z), \varphi, G_n, g_n$ are denoted respectively as $\mc U, \Psi_n, \psi/2, P_n, q_n$ in  \cite{GoRa87}, lemma 8. Content is not changed and the proof in \cite{GoRa87} remains valid.

\begin{lemma}\label{4} Let $\mc O$ be a domain, $L\subset \mc O$ be a simple closed analytic arc and functions $ \Phi_n (z)\in H(\mc O) $ satisfy condition 1 (see \eqref{3-2n}).

Let furthermore $w(z)$  be a continuous function on $L$ without zeros and $G_n(z)$  be sequence of polynomials with
\begin{equation} \label{39}
\nu_n: = \frac1n\,\mc{X}\(G_n\)\overset{*}{\ \to\ }\nu \quad \text{as}  \quad n\to \infty, \quad
n \in \Lambda = \ \{n_k\}_{k =1}^\infty.
\end{equation}

Suppose the following conditions are satisfied in some $*$-symmetric neighborhood $\mc U$ of an interior point $z_0 \in L$

(i) \quad $\nu_n|_{\mc U} = (\nu_n|_{\mc U})^*$ and all zeros of $G_n$ on $L\cap \mc U$ are of even multiplicity,

(ii) \quad $U^{\mu + \sigma}(z^*) = U^{\mu + \sigma}(z)$\quad  for \quad $z \in \mc U$

(iii) \quad $U^{\mu + \sigma}(z_0)  < U^{\mu + \sigma}(z)$ \quad for \quad $z \in L \setminus \{z_0\}.$

Then there exist  a sequence of polynomials $g_n(z)$ with zeros in $\mc
U$ with $(\deg g_n)/n \to 0$ as $ n\to \infty$ such that

\begin{equation} \label{40}
 \left|\int_L g_n(z)\ G_n(z) \Phi_n (z) w(z)(z) dz\right|^{1/n } \
 \to \  \exp\{- ( U^{\nu} + 2\varphi) (z_0)\}, \quad
\end{equation}
(potential and polynomials are spherically normalized).
  \end{lemma}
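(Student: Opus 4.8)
The plan is to evaluate the integral in \eqref{40} by a Laplace-type (steepest-descent) argument localized at the single point $z_0$, using the hypotheses to guarantee both that $z_0$ dominates and that the integrand does not oscillate there. First I would record the $n$-th root asymptotics of the fixed factors. Since $\frac1n\log|G_n(z)| = -U^{\nu_n}(z)\to -U^\nu(z)$ and, by Condition 1, $\frac1n\log|\Phi_n(z)|\to -2\varphi(z)$ uniformly on compact subsets of $\mc O$ away from $\supp\nu$, the modulus of the integrand factor satisfies $|G_n\Phi_n|^{1/n}\to \exp\{-(U^\nu+2\varphi)\}$. Because $\deg g_n = o(n)$ is required, $|g_n|^{1/n}\to 1$ uniformly on $L$, so $g_n$ is invisible at the level of $n$-th roots of the modulus. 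Writing $h:=U^\nu+2\varphi$, the hypotheses (read with $\nu=\mu+\sigma$, the zero distribution of $G_n$ in the intended application, which also explains the $U^{\mu+\sigma}$ in (ii)--(iii)) say that $h$ is $*$-symmetric near $z_0$ and has a strict minimum there along $L$; hence $z_0$ is the point where $|G_n\Phi_n|$ is asymptotically largest on $L$.

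The upper bound is then immediate: estimating by the integral of the modulus and using $|g_n|^{1/n}\to1$, boundedness of $|w|$, and finiteness of the length of $L$, one gets $\varlimsup_n\bigl|\int_L g_n G_n\Phi_n w\,dz\bigr|^{1/n}\le \max_L|G_n\Phi_n|^{1/n}\to e^{-h(z_0)}$. The entire content of the lemma is therefore the matching lower bound, i.e. ruling out cancellation in the oscillatory integral.

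Here the $*$-symmetry is decisive. Passing to the conformal chart $\psi$ that sends $\mc U$ to the unit disc, $L\cap\mc U$ to a real interval, $z_0$ to $0$, and $z\mapsto z^*$ to complex conjugation, hypothesis (i) forces $G_n$ (in the $\psi$-coordinate) to be real and of constant sign on the real interval near $0$: symmetric pairs of zeros $\{w_j,\bar w_j\}$ contribute $|w-w_j|^2\ge0$ on the real axis, and zeros lying on $L$ have even multiplicity, contributing perfect squares. Equivalently, the symmetry $h(z^*)=h(z)$ in (ii) makes the harmonic conjugate of $h$ constant along $L\cap\mc U$, so the leading-order phase $n\,\Im\bigl(\tfrac1n\log(G_n\Phi_n)\bigr)$ of the integrand is constant along the arc through $z_0$ and the dominant oscillation is removed.

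The main obstacle, and the step carrying the real weight, is constructing $g_n$ to neutralize the \emph{residual} phase. After the leading phase is constant, the integrand near $z_0$ still carries a subleading argument of size $o(n)$ from the analytic factor $\Phi_n$ (whose phase is the harmonic conjugate of $\frac1n\log|\Phi_n|$) together with the bounded contributions of $w(z)$ and of $dz$. The plan is to choose $g_n$, of degree $o(n)$ with all zeros in $\mc U$, so that $\arg g_n$ approximates the negative of this residual argument on a small sub-arc $L_\delta=L\cap D_\delta(z_0)$; since $\arg g_n$ is the harmonic conjugate of $\frac1n\sum\log|z-\zeta|$, this is the approximation of a prescribed harmonic function on $L_\delta$ by the potential of a discrete measure with $o(n)$ atoms, a constructive balayage/discretization. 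With such $g_n$ the product $g_n G_n\Phi_n w\,dz$ has essentially constant argument on $L_\delta$, so $\bigl|\int_{L_\delta}\cdots\bigr|\ge c\int_{L_\delta}|G_n\Phi_n|\,|dz|$. Finally, because $z_0$ is a \emph{strict} minimum of $h$ on $L$ by (iii), the contribution of $L\setminus L_\delta$ is exponentially smaller, the integral localizes to $L_\delta$, and Laplace's method on the peak gives $\varliminf_n\bigl|\int_L\cdots\bigr|^{1/n}\ge e^{-h(z_0)}$. Together with the upper bound this yields the claimed limit $\exp\{-(U^\nu+2\varphi)(z_0)\}$; the spherical normalization only rescales $\log|G_n\Phi_n|$ by a bounded-per-degree factor and is handled routinely.
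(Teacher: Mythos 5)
A preliminary remark on what you are being compared with: the paper does not reprove this lemma at all --- it is imported verbatim (up to a dictionary of notation) from \cite{GoRa87}, Lemma~8, p.~337, so the relevant comparison is with that proof. Your overall architecture coincides with it: trivial upper bound through the modulus, localization at $z_0$ via the strict minimum in (iii), and a lower bound obtained by showing the integrand's argument cannot wind at scale $n$ near $z_0$ and then compensating the residual winding by a polynomial $g_n$ of degree $o(n)$.

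The gap is in the step you yourself flag as carrying the weight. Your stated mechanism --- that the symmetry $h(z^*)=h(z)$ of $h=U^{\nu}+2\varphi$ ``makes the harmonic conjugate of $h$ constant along $L\cap\mc U$'' --- is false: $h$ is not harmonic across $L$ (its Riesz measure there is $\nu|_L$), the symmetry only forces the two one\-/sided normal derivatives of $h$ to be \emph{equal} (each then equals $-\pi$ times the linear density of $\nu$ on $L$ by the flux formula), so the tangential derivative of the one-sided conjugate is of order one and the ``leading phase'' advances by an amount of order $n$ across any subarc carrying $\nu$-mass. What saves the integral is the structure of that advance, and this is exactly what (i)--(ii) encode. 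Split $G_n=A_nB_n$ with $A_n$ collecting the zeros lying in $\mc U$. By (i), $A_n(z)\,\overline{A_n(z^*)}=|A_n(z)|^2\ge 0$ on $L$, so the order-$n$ phase of $A_n$ is trivial modulo $2\pi$; the correction $\arg\bigl(A_n(z)/\overline{A_n(z^*)}\bigr)$ is not zero --- your ``perfect squares in the chart $\psi$'' picture does not literally apply, since $G_n$ is a polynomial in $z$ rather than in $\psi(z)$ --- but is at most $\varepsilon n$ on a small enough $\mc U$ because $z\mapsto z^*$ distorts distances by a factor close to $1$. More seriously, your sketch attributes the residual phase only to $\Phi_n$, $w$ and $dz$ and declares it $o(n)$; in fact the zero-free factor $B_n\Phi_n$ carries an a priori phase variation of order $n$, and bounding it by $o(n)$ is precisely where hypothesis (ii) must enter: the quotient $F_n(z)=\bigl(B_n\Phi_n\bigr)(z)\big/\overline{\bigl(B_n\Phi_n\bigr)(z^*)}$ is analytic and zero-free in $\mc U$ with $\frac1n\log|F_n|\to 0$ by (ii), hence $\frac1n\arg F_n$ has variation $o(1)$ on a smaller neighborhood (a Borel--Carath\'eodory/Harnack argument), and on $L$ this quantity equals $\frac2n\arg(B_n\Phi_n)$ up to an additive constant. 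As written, your proof never genuinely uses (ii), and without it the integrand may wind $\sim n$ times on every subarc, defeating any compensator of degree $o(n)$. The remaining ingredients of your plan --- the elementary construction of $g_n$ from the total variation of the argument, and a diagonal argument in $\varepsilon$ (needed because the inner-factor bound is $\varepsilon n$ rather than $o(n)$ for a fixed $\mc U$) --- are consistent with \cite{GoRa87}.
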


The original proof of $\GRS$-theorem in \cite{GoRa87} was a combination of the lemmas 8 and 9 and one more rather simple lemma 7 asserting inequality \eqref{46}, \eqref{47} below.  Here it is convenient for our purposes to single out one more intermediate lemma combining lemmas \ref{3} and \ref{4}.

  \begin{lemma}\label{lem5} Let conditions 1, 2 and 3 respectively on $ \Phi_n (z), $ $\Gamma$ and $f$ are satisfied and a sequence of spherically normalized polynomials $Q_n(z) \in \P_n$ has limit zero distribution
 \begin{equation} \label{42}
\mu_n: = \frac1n\,\mc{X}\(Q_n\)\overset{*}{\ \to\ }\mu \quad \text{as}  \quad n\to \infty, \quad
n \in \Lambda = \ \{n_k\}_{k =1}^\infty
\end{equation}
along a subsequence  $\Lambda$ represented by a measure $\mu$ with
total mass $\mu(\CC) \leq 1$ and different from equilibrium measure
$\lambda = \lambda_{\varphi,\Gamma}$.

Let $\eta$ be a measure with a compact set support in $\CC \setminus \ol
{\mc O}$ and $|\eta| \leq \ol r (\mu).$ Let  $\sigma = \sigma(\mu,
\eta) = (1-r)\lambda + \eta +\nu$ be a measure associated with $\mu$
and $\eta$ whose existence is asserted in lemma 3 above, in particular
$|\nu| < 1-r$ Then there exists spherically normalized sequence of
polynomials $P_n$ satisfying conditions $\deg P_n < n$,
 \begin{equation} \label{43}
\mu_n : = \frac1n\,\mc{X}\(P_n\)\overset{*}{\ \to\ }\sigma \quad \text{as}  \quad n\to \infty, \quad
n \in \Lambda = \ \{n_k\}_{k =1}^\infty
\end{equation}
and
\begin{equation} \label{44}
 \left|\oint_\Gamma Q_n(z)\ P_n(z) \Phi_n (z) f(z) dz\right|^{1/n } \ \to \  e^{-m}
\end{equation}
where
\begin{equation*} 
m = m(\mu+\sigma) =  \min_{z \in \Gamma}( U^{\mu + \sigma} + 2 \varphi) (z) =
 ( U^{\mu + \sigma} + \varphi) (z_0)
\end{equation*}
(potential of $\mu + \sigma$ is spherically normalized).
  \end{lemma}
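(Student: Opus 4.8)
The plan is to build $P_n$ as a product $P_n=g_n\,\widetilde P_n$ of two polynomials with complementary roles and then to reduce the asymptotics in \eqref{44} to the local asymptotics supplied by lemma \ref{4}. First I would fix, via lemma \ref{3}, the measure $\sigma=(1-r)\lambda+\eta+\nu$ together with the point $z_0\in\Gamma^0$ and its $*$-symmetric neighborhood $\mc U=\mc U(z_0)$; I choose $z_0$ inside an analytic arc $L$ of $\Gamma^0$ avoiding the exceptional sets $e\cup e_0$ of condition 3, using property (iii) of lemma \ref{3} (namely $U^\sigma=+\infty$ on $e$) to guarantee this, so that the jump of $f$ across $\Gamma$ is continuous and nonvanishing on $L$.

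Next I construct the bulk polynomial $\widetilde P_n$ with $\frac1n\mc{X}(\widetilde P_n)\overset{*}{\to}\sigma$ by discretizing $\sigma$ in the usual way off $\mc U$, while inside $\mc U$ I place the zeros of $\widetilde P_n$ exactly at the reflections $a^*$ of the zeros $a$ of $Q_n$ lying in $\mc U$. By property (ii) of lemma \ref{3} this matches $\sigma|_{\mc U}=(\mu|_{\mc U})^*$, and it makes the zero set of $G_n:=Q_n\widetilde P_n$ be $*$-symmetric in $\mc U$ with even multiplicities on $L$; thus $G_n$ satisfies hypothesis (i) of lemma \ref{4}. Choosing $\varepsilon$ small (and, if needed, shrinking $r$) so that $|\sigma|<1$, we get $\deg\widetilde P_n<n$.

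I would then apply lemma \ref{4} to $G_n$, with $\nu=\mu+\sigma$, the arc $L$, and $w$ equal to the continuous nonvanishing jump of $f$ across $\Gamma$. Its hypotheses (ii),(iii) are precisely the symmetry and strict-minimum properties (iv),(v) of lemma \ref{3} for the total potential $U^{\mu+\sigma}+2\varphi$, whose minimum over $\Gamma$ is attained only at $z_0$ and equals $m$. Lemma \ref{4} produces the correction $g_n$, with $\deg g_n=o(n)$, zeros in $\mc U$, and
$$\Bigl|\int_L g_n\,G_n\,\Phi_n\,w\,dz\Bigr|^{1/n}\to e^{-m}.$$
Setting $P_n=g_n\widetilde P_n$ then gives $\deg P_n<n$ and, since $\deg g_n=o(n)$, $\frac1n\mc{X}(P_n)\overset{*}{\to}\sigma$, which is \eqref{43}.

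It remains to globalize. After deforming the cycle $\oint_\Gamma$ onto $\Gamma$ and using condition 3 to replace $f$ by its jump $w$, the integral in \eqref{44} becomes $\int_\Gamma g_n G_n\Phi_n w\,dz$, which I split as $\int_L+\int_{\Gamma\setminus L}$. The first piece tends to $e^{-m}$ by the display above. For the second, the convergence properties of potentials (semiuniform from above, exactly as in the proof of theorem \ref{M-3}) together with condition 1 give $|G_n\Phi_n|^{1/n}\le\exp\{-(U^{\mu+\sigma}+2\varphi)+o(1)\}$ on $\Gamma$; since $z_0$ is the strict minimum of $U^{\mu+\sigma}+2\varphi$ on $\Gamma$, the exponent is bounded by some $m'>m$ on the compact set $\Gamma\setminus L$, and with $|g_n|^{1/n}\to1$ and $w$ bounded this yields $\varlimsup\bigl|\int_{\Gamma\setminus L}g_nG_n\Phi_n w\,dz\bigr|^{1/n}\le e^{-m'}<e^{-m}$. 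Hence the local piece dominates and the whole integral satisfies \eqref{44}. The main obstacle is exactly this localization without cancellation near $z_0$: the complex integrand could oscillate and destroy the lower bound, and it is the reflected construction of $\widetilde P_n$ together with the correction $g_n$ of lemma \ref{4} (Stahl's device) that forces the modulus of the integrand to have a genuine, sign-definite peak at $z_0$, so that the local contribution is $e^{-m}$ rather than something smaller.
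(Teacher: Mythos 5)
Your proposal follows essentially the same route as the paper's own proof: invoke Lemma \ref{3} to obtain $\sigma$, $z_0$ and the $*$-symmetric neighborhood $\mc U$, build $\widetilde P_n$ with zeros in $\mc U$ reflected from those of $Q_n$ so that $G_n=Q_n\widetilde P_n$ meets hypothesis (i) of Lemma \ref{4}, take $P_n=g_n\widetilde P_n$ with the correction $g_n$ supplied by Lemma \ref{4}, and dispose of $\Gamma\setminus L$ via the strict-minimum estimate (the paper cites lemma 7 of \cite{GoRa87} for this last bound, which is the same semiuniform-convergence fact you use). The argument is correct and matches the paper's construction in all essentials.
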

\begin{proof}
As we mentioned, lemma 5 is essentially a combination of lemmas \ref{3} and \ref{4}. Reduction to lemmas \ref{3} and \ref{4} require mainly some comments on connections between the two lemmas.

First, exceptional set $e$ which appear in hypotheses of lemma 3 will
be union the three sets, First one is $\Gamma^1 \setminus \Gamma^0$ end
points of analytic arcs of $\supp \lambda$. Here will be also included
points where equilibrium conditions \eqref{M7} are violated (if any).
Second one is $e(f)$ (singularities of $f$). On any arc in  $\Gamma^0
\setminus e(f)$ function $f$ has continuous boundary values $f^\pm (z)$
and the third part of exceptional set  consists of zeros of $w(z) =
(f^+-f^-)(z)$ on $\Gamma^0 \setminus e(f).$

Next, conditions of lemma \ref{3} are satisfied and the lemma asserts existence of a point
$z_0 \in \Gamma^0\setminus e$ and its $*$-symmetric neighborhood $\mc U = \mc U(z_0) \in \Gamma^0\setminus e$ and a measure $\sigma = \sigma(\mu)$ with properties (i) - (vi).

We define analytic arc $L = \Gamma^0 \cup \mc U$ and  $w(z) = (f^+-f^-)(z)$ on $L.$
Then we construct a sequence of polynomials $\tilde P_n$ with  $\frac1n\,\mc{X}(\tilde P_n)\overset{*}{\ \to\ }\sigma.$ This can be done with a significant degree of freedom and the choice of $\tilde P_n$ may be made subject some extra conditions.

Namely, using (ii) of lemma 3 we can select zeros of $\tilde P_n$ in $\mc U$ so that they are $^*$-symmetric to zeros of $Q_n$ in $\mc U.$ In particular, zeros of $\tilde P_nQ_n$ on $L = \Gamma \cup \mc U$ are of even multiplicity. Thus, we infer that condition (i) of lemma \ref{lem5} for the measure $\nu = \mu +\sigma$. The other two conditions (ii) and (iii) on potential of $\nu$ in lemma \ref{4} are satisfied since they are identical to assertions (iv) and (v) of lemma 3.

We define for $G_n = \tilde P_n Q_n$. Now, conditions of lemma \ref{4} are satisfied and the lemma asserts existence of polynomial $g_n$ of degree $o(n)$ with \eqref{40}. We set $P_n = g_n \tilde P_n.$
For these polynomials the assertion \eqref{44} in lemma \ref{lem5} with $\oint_\Gamma$ in \eqref{39} replaced by $\int_L$ (and $w$ in place of $f$)  follow from lemma {4}.

Finally, to pass to $\int_\Gamma$ in \eqref{44} we use inequality
\begin{equation} \label{46}
\varlimsup_{n\in\Lambda} \left|\int_{\Gamma \setminus L} Q_n(z) P_n(z) \Phi_n (z) f(z) dz\right|^{1/n } \ \leq \  e^{-m'}
\end{equation}
where $m' = \min_{\Gamma \setminus L} ( U^{\mu + \sigma} + 2\varphi) > m$ and contour $\Gamma$ is modified so that (both copies of) $L$ belong to $\Gamma.$
This assertion is actually follows from quite general properties of convergence of potentials of weakly convergent measure (see e.g.  lemma 7 in \cite{GoRa87} and comments there).
\end{proof}

\subsection{A modification of lemma 5}
Lemma 5 may present, after a proper generalization, an independent interest.
We mention briefly a more general problem connected to the lemma.

Consider again setting of the lemma: given $ \Phi_n (z), $ $\Gamma$ and $f$ with conditions 1, 2 and 3 respectively and two sequences of spherically normalized polynomials $Q_n$ and $P_n$ satisfying conditions \eqref{39} and \eqref{40} respectively. Now, return to inequality \eqref{46} which we write in application to the whole set $\Gamma$
\begin{equation} \label{47}
\varlimsup_{n\in\Lambda} \left|\oint_\Gamma Q_n(z) P_n(z) \Phi_n (z) f(z) dz\right|^{1/n } \ \leq \  e^{-m}, \quad \text{where}\quad m = \min_{\Gamma}( U^{\mu + \sigma} +2\varphi)
\end{equation}
(see lemma 7 in \cite{GoRa87}). As mentioned above, this inequality is rather general, it follows from $\frac1n\,\mc{X}\(P_nQ_n\)\overset{*}{\ \to\ }\mu +\sigma$ and some mild assumptions on $\varphi$,  $f$, $\Gamma$  (no need for the $S$-property) and convergence in \eqref{3-2}.

Now, a natural questions to ask are the following. Suppose a sequence $Q_n$ of polynomials with \eqref{42} is given. What are conditions (in particular on measures $\mu, \sigma$) which would imply that there is a sequence $P_n$ with \eqref{43} such that equality in \eqref{47} holds? Another question is what are conditions under which to the limit exists in place of the upper limit.

Here we are not going to investigate these problem in general settings. For the purposes of application to Hermite--Pad\'e polynomials we need to prove one particular proposition which is a corollary of lemmas \ref{3} -\ref{lem5}.
Loosely speaking we will show that in setting of lemma 5 equality in
\eqref{47} holds for any $\mu \ne \lambda$ if $\sigma - \lambda$ is
small enough. We do not ask if $\lim$ exists, so that, we are ready to
pass to a subsequence of $\Lambda$.


\begin{lemma}\label{lem6}
Let $\mc O \subset D_{1/2}$ be an open set and for the sequence of functions $ \Phi_n \in H(\mc O)$, compact set $\Gamma \subset \mc O$ and function $f \in H_0(\mc O \setminus \Gamma)$ conditions 1, 2 and 3 are satisfied. Let $Q_n(z) $ be a sequence of polynomials satisfying \eqref{42} with a measure $\mu \ne \lambda$ with $|\mu| \leq 1.$

Then there exist $\bar r \in (0,1)$ such that for any $r \in [\bar r, 1]$ and any measure $\eta$ with a compact set support in $\CC \setminus \ol {\mc O}$ and $|\eta| = 1- r $ there exists a sequence of polynomials $P_n, \ n\in \Lambda$ such that $\quad\frac1n\,\mc{X}\(P_n\)\overset{*}{\ \to\ } r\lambda + \eta\quad$ as $\quad n\to \infty$, $ \quad\ n\in \Lambda, \quad$ such that $\deg P_n < n,$ all zeros of $P_n$ belong to $\Gamma \cup \mc O$  and
\begin{equation} \label{48}
 \left|\oint_\Gamma Q_n(z) P_n(z) \Phi_n (z) f(z)\ dz\right|^{1/n } \ \to \
\exp \{ - \min_{\Gamma} (U^{\mu + \sigma} + 2\varphi )\}
\end{equation}
is valid with $\sigma = r\lambda + \eta.$ (potentials and polynomials are spherically normalized).

\end{lemma}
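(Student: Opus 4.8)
The upper bound in \eqref{48} needs no construction at all: by the general estimate \eqref{47} (lemma~7 of \cite{GoRa87}), \emph{any} sequence $P_n$ with $\tfrac1n\mc{X}(P_n)\overset{*}{\to}\sigma$ satisfies $\varlimsup_{n\in\Lambda}|\oint_\Gamma Q_nP_n\Phi_n f\,dz|^{1/n}\le e^{-m}$, where $m=\min_\Gamma(U^{\mu+\sigma}+2\varphi)$. Thus the entire content of the lemma is to exhibit a sequence whose limit zero distribution is \emph{exactly} the clean measure $\sigma=r\lambda+\eta$ and for which the matching lower bound holds. The plan is to obtain it as a diagonal limit of the sequences supplied by lemma~\ref{lem5}, in which the auxiliary correction $\nu$ is present but can be driven to zero.

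First I would fix $\bar r:=1-r_0$, where $r_0=\ol r(\mu)\in(0,1)$ is the threshold of lemmas~\ref{3}--\ref{lem5}, and fix $r\in[\bar r,1]$ and $\eta$ as in the statement; then $|\eta|=1-r\le r_0$, so lemma~\ref{lem5} is applicable with parameter $1-r$. For a sequence $\varepsilon_j\downarrow 0$ I apply it with the parameter $\varepsilon$ of the underlying lemma~\ref{3} set to $\varepsilon_j$: this furnishes a positive measure $\sigma_j=r\lambda+\eta+\nu_j$ with $|\nu_j|\le\varepsilon_j$ (property~(vi) of lemma~\ref{3}) and a spherically normalized sequence $P^{(j)}_n$ ($n\in\Lambda$), with $\deg P^{(j)}_n<n$, zeros lying on $\Gamma$, in $\mc O$ and at $\supp\eta$, such that $\tfrac1n\mc{X}(P^{(j)}_n)\overset{*}{\to}\sigma_j$ and
\begin{equation*}
\Bigl|\oint_\Gamma Q_n(z)P^{(j)}_n(z)\Phi_n(z)f(z)\,dz\Bigr|^{1/n}\ \longrightarrow\ e^{-m_j},\qquad m_j=\min_\Gamma\bigl(U^{\mu+\sigma_j}+2\varphi\bigr).
\end{equation*}
Since $|\nu_j|\to 0$ we have $\sigma_j\overset{*}{\to}\sigma$, and it remains to show that the exponents converge, $m_j\to m$.

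The convergence $m_j\to m$ is the crux, and I expect it to be the main obstacle. One inequality, $\varliminf_j m_j\ge m$, is immediate from the principle of descent: if $z_0^{(j)}\in\Gamma^0$ is the strict minimizer provided by property~(v) of lemma~\ref{3} and $z_0^{(j)}\to z_*$ along a subsequence, then $\varliminf_j m_j=\varliminf_j(U^{\mu+\sigma_j}+2\varphi)(z_0^{(j)})\ge(U^{\mu+\sigma}+2\varphi)(z_*)\ge m$. The reverse inequality is the delicate part, and is exactly where the absence from the clean $\sigma$ of the auxiliary piece with $U=+\infty$ on $e$ — which in lemma~\ref{3} pinned the minimizer into the regular set $\Gamma^0\setminus e$ — has to be accounted for. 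I would obtain it from the convergence of minima of logarithmic potentials in an external field under weak convergence, precisely the tool used in the proof of theorem~\ref{M-3} (see \cite{GoRa87} and \cite{Lan66}): since $\Gamma$ is regular and $\mu+\sigma_j\overset{*}{\to}\mu+\sigma$ with uniformly bounded masses, $\min_\Gamma(U^{\mu+\sigma_j}+2\varphi)\to\min_\Gamma(U^{\mu+\sigma}+2\varphi)$, that is $m_j\to m$. The constructive description of $\nu_j$ (each of its pieces carries mass $O(\varepsilon_j)$ and is supported on $e$, near $z_0^{(j)}$, or at $\supp\eta$) makes this transparent, as $U^{\nu_j}\to 0$ off an asymptotically negligible set.

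With $m_j\to m$ in hand, the proof is finished by a standard diagonal extraction. For each $j$ I choose $N_j\in\Lambda$, with $N_1<N_2<\cdots$, so large that for every $n\ge N_j$ in $\Lambda$ the measure $\tfrac1n\mc{X}(P^{(j)}_n)$ lies within $1/j$ of $\sigma_j$ (in a metric metrizing weak convergence on the sphere) and the $n$-th root of the corresponding integral lies within $1/j$ of $e^{-m_j}$; then I set $P_n:=P^{(j(n))}_n$ for $N_{j(n)}\le n<N_{j(n)+1}$. As $n\to\infty$ along $\Lambda$ one has $j(n)\to\infty$, whence $\tfrac1n\mc{X}(P_n)\overset{*}{\to}\sigma=r\lambda+\eta$, $\deg P_n<n$ with zeros in the asserted locations, and $|\oint_\Gamma Q_nP_n\Phi_n f\,dz|^{1/n}\to e^{-m}$, which is \eqref{48}.
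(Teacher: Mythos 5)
Your proposal is correct and follows essentially the same route as the paper: apply Lemma \ref{lem5} with the parameter $\varepsilon$ of Lemma \ref{3} running through a sequence tending to zero, use property (vi) to get total-variation convergence $\sigma_j=r\lambda+\eta+\nu_j\to r\lambda+\eta$ and hence convergence of the exponents $m_j\to m$, and finish by a diagonal extraction. Your extra care in justifying $m_j\to m$ (descent plus convergence of minima over the regular compact $\Gamma$) only fills in a step the paper states without detail.
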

\begin{proof}
We fix exceptional set $e$ as in the proof of lemma \ref{lem5}.  According to lemma \ref{3} there exists $\ol r = \ol r(\mu) \in (0,1)$ and a measure $\eta$ with $|\eta| \leq 1 - r$ such that for any $\varepsilon \in (0, (1-r)/4)$ there is a point $z_0 = z_0(\varepsilon)  \in \Gamma^0\setminus e$, its $*$-symmetric neighborhood $\mc U = \mc U(z_0) $ and a positive measure $\sigma = \sigma(\mu, \varepsilon)$ with properties (i) - (vi). We will pass to the limit as $\varepsilon \to 0$

We fix $r \in [\ol r, 1)$  and measure $\eta$ with $|\eta| < 1- r$ .
Consider the sequence $\varepsilon = 1/N$ with $N \in \N$. Denote associated sequence of measures $\sigma$ by $\sigma_N = \sigma(\mu, 1/N) = r \lambda + \eta + \nu_N$.
By lemma \ref{5} for any $\sigma_N$ we have a sequence of polynomials  $P_{n, N}$ with $\quad\frac1n\,\mc{X}\(P_{n,N}\)\overset{*}{\ \to\ }\sigma_N \quad$ and
\begin{equation}
\label{49}
M_{n,N} = \left|\oint_\Gamma (Q_n\ P_{n, N}\ \Phi_n \ f)(z)\ dz\right|^{1/n } \ \to \  M_N =
\exp \{ - \min_{\Gamma} (U^{\mu + \sigma_N} + 2\varphi )\}
\end{equation}
as $ n\to \infty$, $ n \in \Lambda = \ \{n_k\}_{k =1}^\infty$. Note that the sequence $ \Lambda $ do not depend on $N$ (it is the sequence of convergence of counting measures for $Q_n$; see \ref {42}).

By (vi) in lemma \ref{3} we have convergence $\sigma_N \to r\sigma$ (in total variation and, therefore, weak$^*$. It follows $M_N \to M = \exp \{ - \min_{\Gamma} (U^{\mu + r\lambda} + 2\varphi )\}.$
Finally, using standard procedure we can select a diagonal subsequence in the table $M_{n, N}$ which converges to $M$. This gives a subsequence  $ \Lambda_1 \subset  \Lambda$ and sequence
$N_n$,
$n \in \Lambda_1$ such that $N_n \to\infty$ and $M_{n, {m_n}} \to M$ as $n \to\infty,$ $\ n \in \Lambda_1.$ Since $\sigma_N \to r\sigma$ in variations, we also have $\sigma_{N_n} \to r\lambda$.

Finally, selection of zeros of $P_n$ can be made so that  $\quad\frac1n\,\mc{X}\(P_n\)\overset{*}{\ \to\ } r\lambda \quad$ as $\quad n\to \infty, \ n\in \Lambda_1 $, and all zeros of $P_n$ belong to $\Gamma \cup \mc O.$ Thus, proof of the lemma is completed.
\end{proof}

Next we prove a lemma concerning the case $\mu = \lambda$ which was not
contained in lemma \ref{lem6}. The case is, indeed, exceptional. First,
the case is exceptional in lemma \ref{5} too, so we can not make a
direct reference to this lemma. Second we can not expect to construct
polynomials $P_n$ with \eqref{48} and $\deg P_n < n$ like in lemma
\ref{lem6}. Such polynomials should satisfy $\deg P_n \geq n$ (otherwise
orthogonal polynomials will provide us with a counter example to the
assertion of the lemma).

This suggests that we will need to use sequence of measures $\sigma_N \to \lambda$ approximating $\lambda$ from above in sense that $|\sigma_N| >1.$ In the proof of the next lemma \ref{7} we implement such approximation. Actually, for the future references we need the method of the proof of the lemma rather then its assertion.

\begin{lemma} \label{7}
Let $\mc O \subset D_{1/2}$ be an open set and for the sequence of functions $ \Phi_n \in H(\mc O)$, compact set $\Gamma \subset \mc O$ and function $f \in H_0(\mc O \setminus \Gamma)$ conditions 1, 2 and 3 are satisfied. Let $Q_n(z) $ be a sequence of polynomials satisfying \eqref{42} with $|\mu| \leq 1.$
Then there exists a sequence of polynomials $P_n, \ n\in \Lambda_1 \subset  \Lambda$ such that $\quad\frac1n\,\mc{X}\(P_n\)\overset{*}{\ \to\ }\lambda \quad$ as
$\quad n\to \infty, \ n\in \Lambda_1 $ and  \eqref{48} holds with $\sigma = \lambda.$
\end{lemma}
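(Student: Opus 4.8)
The plan is to treat the degenerate case $\mu=\lambda$ (and, more to the point, the requirement that the limit distribution of $P_n$ be exactly $\lambda$) by approximating $\lambda$ from above and then passing to a diagonal limit, in the spirit of the argument that closes the proof of Lemma \ref{lem6}. The obstruction is visible at once: with $\sigma=\lambda$ and $\mu=\lambda$ the relevant total potential is $U^{\mu+\sigma}+2\varphi=2(U^\lambda+\varphi)$, which by the equilibrium conditions \eqref{M17} equals the constant $2w$ on $\Gamma^1=\supp\lambda$ and strictly exceeds $2w$ on $\Gamma\setminus\Gamma^1$. Hence it has no strict minimum on $\Gamma^0$, and the mechanism of Lemmas \ref{3}--\ref{lem5} (which rests on isolating a single sharp minimum point $z_0\in\Gamma^0$) is unavailable; correspondingly, since $\frac1n\mc{X}(P_n)\to\lambda$ forces $\deg P_n\ge n$ — otherwise, for orthogonal $Q_n$, the left side of \eqref{48} would vanish while its right side is positive — we cannot invoke Lemma \ref{lem6} directly.

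First I would introduce auxiliary measures $\sigma_N=\lambda+\eta_N$ with $\eta_N\ge 0$, $\supp\eta_N\subset\CC\setminus\ol{\mc O}$, and $|\eta_N|=1/N$, so that $|\sigma_N|=1+1/N>1$ and $\sigma_N\to\lambda$ in total variation. The point of the excess mass is that its potential $U^{\eta_N}$ is harmonic on a neighbourhood of $\Gamma$, and for a suitable placement of $\eta_N$ it has a strict minimum over $\Gamma^1$ at a prescribed interior regular point $z_0=z_{0,N}\in\Gamma^0$. Because $2(U^\lambda+\varphi)$ is flat ($\equiv 2w$) on $\Gamma^1$ and strictly larger on $\Gamma\setminus\Gamma^1$, for $N$ large the function $W_N:=U^{\mu+\sigma_N}+2\varphi=2(U^\lambda+\varphi)+U^{\eta_N}$ then attains its global minimum over $\Gamma$ strictly at $z_0$. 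Thus the excess mass restores precisely the non-degeneracy that the hypothesis $\mu\neq\lambda$ supplied in Lemma \ref{3}.

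Next I would reinstate the local symmetry required by Lemma \ref{4}. As in Lemma \ref{3}, I set $\sigma_N|_{\mc U}=(\mu|_{\mc U})^{*}$ on a $*$-symmetric neighbourhood $\mc U=\mc U(z_0)$ (for $\mu=\lambda$ this is already $*$-symmetric on the arc, so it is consistent with $\sigma_N\to\lambda$), and I cancel the harmonic asymmetry introduced near $z_0$ by $U^{\eta_N}$ with an additional symmetric bump of arbitrarily small mass supported in $\mc U$, exactly as in the final step of the construction of Lemma \ref{3}. This yields both $(U^{\mu+\sigma_N})(z^{*})=(U^{\mu+\sigma_N})(z)$ on $\mc U$ and the strict minimum at $z_0$, i.e. hypotheses (ii)--(iii) of Lemma \ref{4}; crucially these do not require $\mu\neq\lambda$. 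Choosing the zeros of a polynomial with $\frac1n\mc{X}\to\sigma_N$ to be $*$-symmetric to those of $Q_n$ inside $\mc U$ secures hypothesis (i), and then Lemma \ref{4} together with the passage from $\int_L$ to $\oint_\Gamma$ of Lemma \ref{lem5} (estimate \eqref{46}) produces polynomials $P_{n,N}$ with $\frac1n\mc{X}(P_{n,N})\to\sigma_N$ and
\begin{equation*}
\left|\oint_\Gamma Q_n P_{n,N}\,\Phi_n f\,dz\right|^{1/n}\to\exp\{-\min_\Gamma W_N\}=\exp\{-\min_\Gamma(U^{\mu+\sigma_N}+2\varphi)\}.
\end{equation*}
Since $|\sigma_N|>1$, these satisfy $\deg P_{n,N}>n$ for large $n$, so the orthogonality obstruction is avoided.

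Finally I would let $N\to\infty$. Because $\sigma_N\to\lambda$ in variation, the convergence properties of potentials used throughout (lower semicontinuity and convergence of minima over the regular compact set $\Gamma$) give $\min_\Gamma(U^{\mu+\sigma_N}+2\varphi)\to\min_\Gamma(U^{\mu+\lambda}+2\varphi)$. A diagonal extraction, identical to the one closing the proof of Lemma \ref{lem6}, then selects $\Lambda_1\subset\Lambda$ and indices $N=N_n\to\infty$ for which $P_n:=P_{n,N_n}$ satisfies $\frac1n\mc{X}(P_n)\to\lambda$ and \eqref{48} with $\sigma=\lambda$. The step I expect to be most delicate is the second paragraph: arranging $\sigma_N$ so that $W_N$ has a genuine strict minimum at an interior point $z_0\in\Gamma^0$ while keeping $\sigma_N$ close to $\lambda$ and locally $*$-symmetric near $z_0$. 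This is exactly where the missing hypothesis $\mu\neq\lambda$ has to be compensated, and it is the only place where ``approximation from above'' ($|\sigma_N|>1$) is genuinely used rather than merely convenient for the degree count.
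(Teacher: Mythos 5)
Your high-level idea --- approximate $\lambda$ from above by measures of total mass $>1$ and finish with a diagonal extraction --- is exactly the strategy the paper announces just before Lemma \ref{7}, but the paper implements it by a quite different device, and your implementation has concrete gaps. The paper never perturbs the measure $\sigma$: it reindexes the polynomials, setting $n'=[Rn]$ with $R>1$ and regarding $Q_n$ as an element of $\P_{n'}$, so that $\frac1{n'}\,\mc{X}(Q_{n'})\overset{*}{\to}r\lambda$ with $r=1/R<1$. This puts one back in the already-settled case ``limit measure $\ne\lambda$'' and Lemma \ref{lem5} applies as a black box; the price is that the field becomes $\psi=R\varphi$, in which $\Gamma$ no longer has the $S$-property, so $\Gamma$ must be replaced by a nearby $S$-compact $\Gamma(R)$ whose existence and continuity as $R\to1$ the paper defers to Section 4. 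Then $R=1+1/N$ and a diagonal argument conclude.

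Your route keeps $n$, $\varphi$ and $\Gamma$ fixed and instead sets $\sigma_N=\lambda+\eta_N$ with $\eta_N$ exterior to $\ol{\mc O}$. This forces you to reopen the proof of Lemma \ref{3} (lemma 9 of \cite{GoRa87}) rather than cite it, since its hypothesis $\mu\ne\lambda$ fails, and you compensate for only one of its uses. Concretely: (1) that hypothesis is used at the very first step of that proof to get the strict gap $w_0(0,\eta)<w_1(0,\eta)$, i.e. $\inf_{\Gamma^1}U^{\mu-\lambda}<\inf_{\partial\mc O}U^{\mu-\lambda}$, which is what fixes the parameter $t>0$ (equivalently $\ol r(\mu)<1$). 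For $\mu=\lambda$ both sides vanish, and adding $U^{\eta_N}$ with $\eta_N\ge0$ supported outside $\ol{\mc O}$ pushes this inequality the \emph{wrong} way: $U^{\eta_N}$ is harmonic on $\mc O$, so by the minimum principle $\inf_{\partial\mc O}U^{\eta_N}\le\inf_{\Gamma^1}U^{\eta_N}$. The exterior mass restores a strict minimum on $\Gamma$ but not the $\Gamma$-versus-$\partial\mc O$ gap the construction also needs. (2) Even if you bypass Lemma \ref{3} and verify the hypotheses of Lemma \ref{4} by hand, condition (ii) (exact $*$-symmetry of the potential in $\mc U$) cannot be restored by ``an additional symmetric bump'': a symmetric measure contributes a symmetric potential and cannot cancel the antisymmetric part of $U^{\eta_N}$ in $\mc U$; the correction in Lemma \ref{3} is a specifically constructed reflected/swept measure, not a symmetric one. (3) The existence of a positive $\eta_N$ supported outside $\ol{\mc O}$ whose potential has a strict global minimum over all of $\Gamma$ at an interior regular point of $\Gamma^0$ avoiding the capacity-zero exceptional set is asserted, not proved; for a single exterior point mass the minimum of $-\log|z-\zeta|$ over an arc typically sits at an endpoint, so this genuinely needs an argument. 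The paper's reindexing trick sidesteps all three issues, at the cost of the perturbation lemma for $S$-compacts proved in Section 4.
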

\begin{proof}
To make it possible to apply formally  lemma \ref{3} to a sequence of polynomials $Q_n$ with $\quad\frac1n\,\mc{X}\(Q_n\)\overset{*}{\ \to\ }\lambda \quad$ we argue as follows.
Fix $R >1$ and consider a new sequence of positive integers
\begin{equation*}
n' = [Rn] \in \Lambda', \qquad \text{where}\quad  n \in \Lambda
\end{equation*}
and where $[x]$ stands for the integral part of $x$. Next, we make
substitution in index (variable) $n$ of the polynomial (index is interpreted here as an independent variable of the function $n \to Q_n$).
We will consider polynomial $Q_n$ as element of $\P_{n'}$, that is, sequence $Q_n, \ n \in \Lambda$ is now interpreted as sequence $Q_{n'}, \ n \in \Lambda'.  $
We have
\begin{equation*}
\quad\frac1{n'}\,\mc{X}\(Q_{n'}\)\overset{*}{\ \to\ } r\lambda,
\qquad \text{where} \quad r = \frac 1R <1
\end{equation*}
So, the limit measure is now different form $\lambda$

Next, we need to make the same substitution in index $n$ of the variable weights $\Phi_n(z).$ This would result in the change of the function $\varphi$ representing asymptotics of $\Phi_n(z).$  In place of  \eqref{3-2} we have to write
\begin{equation}
\label{50}
\frac 1{2n'}\ \log \frac 1{|\Phi_n(z)|}\ \to \ \ \psi(z) =  R \varphi(z)
\end{equation}
Finally, compact set $\Gamma$ has $S$-property in the field $\varphi$ but
not in the field $\psi$ and to use lemma 5 we have to modify $\Gamma$.

More exactly, it would be enough to prove that for small enough $R-1 >0$
there is a compact set $\Gamma' = \Gamma(R)$ with $S$ property in the field
$\psi $ homotopic to $\Gamma$, so that $\oint_\Gamma$ in \eqref{48} may
be replaced with $\oint_\Gamma'$ and such that as $R \to 1$ we have
convergence of compact sets $ \Gamma(R) \to \Gamma$  in Hausdorff metric
and also weak convergence $\lambda' = \lambda(R) \to \lambda$ of their
equilibrium measures in associated fields $\psi = R\varphi.$ We will
prove such a lemma in section 4.

Now, for a fixed $R$ and $\Gamma' = \Gamma(R)$ we obtain from lemma \ref{5} that
there exists a sequence of polynomials $P_{n'}, \ n'\in \Lambda' \subset  \Lambda$ such that $\quad\frac1{n'}\,\mc{X}\(P_{n'}\)\overset{*}{\ \to\ }\lambda' \quad$ as $\quad n'\to \infty, \ n\in \Lambda' $, 
such that
 \begin{equation} \label{51}
 \left|\oint_{\Gamma'} Q_n(z)\ P_n(z) \Phi_n (z) f(z) dz\right|^{1/n } \ \to \  e^{- m}
\end{equation}
where $m = m(R) = R \min_{z \in \Gamma'}( (1+r) U^{\lambda'} + 2 \psi) (z). $ The constant $m(R)$ in the exponent has been obtained as follows. First we write this relation with $n'$ in place of $n$ and use expressions from lemma \ref{5}. Next, we make
substitution $n' = nR$ what changes the exponent $1/n'$ to $r (1/n)$. Finally, we raise both sides of the equality to the power $R$ which brings multiplier $R$ in the exponent to  the right.

 To conclude the proof we will consider sequence $R_N = 1+1/N \to 1$ and then use a diagonal process to find a desired sequence of polynomials. This part is similar to what we did in the proof of part (ii). By a proper selection of a sequence $N_n$ we can obtain a sequence of polynomials $P_n = P_{n, N_n}$ such that $\frac 1n \mc X(P_n) \to \lambda$ along some subsequence of $\Lambda$. Since $m(R) \to m_0 =
\min_{z \in \Gamma}( 2 U^{\lambda} + 2 \varphi) (z)$ as $R \to 1$ the
left hand side of \eqref{51} will converge to $e^{-m_0}$.
\end{proof}

\subsection{Proof of theorem 1.}
 Let $q_{n,k}(z) = c_{n,k}Q_{n,k}(z)$ be 
 polynomials defined in \eqref{2} and coefficients $c_{n,k}$ are determined by the condition that polynomials $Q_{n,k} \in P_n$ are spherically normalized.

Let $\vec \Gamma$ be the extremal compact set associated with $\vec f$ and
$\vec \lambda = \lambda_{\vec \Gamma}$ be its vector equilibrium measure
with unit components $\lambda_k;$ we denote $\lambda =\sum^s_{i=1}
\lambda_i$ and furthermore, $w_k = \min_{z \in \Gamma_k} W_k(z)$ where $W_k
(z) = U^{\lambda_k + \lambda}(z)$ (see \eqref{12}, \eqref{13}).  
components of the equilibrium vector potential

The beginning of the proof of theorem \ref{thm1} is similar to that in case of Markov functions in section 2. First we select a sequence $\Lambda \subset \N$ such that as $n \to \infty, \  $ $n \in \Lambda$ we have
\begin{equation} \label{52}
\frac 1n \mc X(Q_{n,k}) \to \mu_k, \qquad  |c_{n,k}|^{1/n}  \to e^{ - u_k}, \qquad k =1,2, \dots, s
\end{equation}
where $u_k \in[-\infty, +\infty].$ The remainder $R_n$ in \eqref{2} may
be normalized in such a way that $u_k \geq 0$ and $\min u_k = 0$. Then
we have $u_k \in[0, +\infty]$ and infinite values of $u_k$ are {\it a
priory} not excluded.

Now we have to prove that $\mu_k = \lambda_k$ and the numbers $\tilde m_k = u_k + w_k$ are all equal for $k =1, \dots, s$. 
We will do it in two steps. First, we will define
\begin{equation} \label{53}
m_k = u_k + \min_{z \in \Gamma_k}U^{\mu_k + \lambda}(z) =
 u_k + w_k +\min_{z \in \Gamma_k}U^{\mu_k - \lambda_k}(z),
\qquad k =1,2, \dots, s
\end{equation}
and prove, that $m_1 = m_2 = \dots = m_s$. Then we will prove that $\mu_k = \lambda_k$ which would also imply that $m_k = \tilde m_k$, thus,  completing the process.

In both steps we proceed from the contrary: assuming that the desired assertion is not valid we come to a contradiction with orthogonality relations \eqref{11} which may be equivalently written as follows
\begin{equation} \label{54}
\sum_{k=1}^s I_{n,k} = 0, \quad\text{where}\quad  I_{n,k} = I_{n,k}(G_n)  =
\oint_{\Gamma_k} q_{n,k}(z) G_n(z) f_k(z) dz
\end{equation}
and $G_n \in\PP_{ns}.$ For the sake of convenience of the reader we will first present detail of the proof for the case $s =2$.

\subsubsection{Proof of theorem 1 for the case $s =2$.}
In this section we have $k =1, 2$. To prove that $m_1 = m_2$  for constants $m_k$ in \eqref{54}
we assume the contrary $m_1 \ne m_2.$  Without loss of generality we can assume $m_1 < m_2$ (renumerating things in case of need).

We select a spherically normalized sequence of polynomials $\Phi_n$ with $\frac 1n \mc X(\Phi_n) \to \lambda_2$ and zeros on $\Gamma_2$.  For this sequence we have uniformly in a neighborhood of $\Gamma_1$
\begin{equation}
\label{55}
\frac 1{2n}\ \log \frac 1{|\Phi_n(z)|}\ \to  \   \varphi(z) = \frac 12 U^{\lambda_2} (z)
\end{equation}
Now conditions of lemma \ref{lem6} are satisfied with $\Gamma = \Gamma_1$,
$\ Q_n = c_{n,k}^{-1}q_{n,k}$ from \eqref{52}, $\mu = \mu_1$ from
\eqref{53} and $\sigma = \lambda_1$. It follows from the lemma  that
there is a (sub)sequence of polynomials $P_n$ with $n \in \Lambda_1
\subset \Lambda$ and $\frac 1n \mc X(P_n) \to \lambda_1$ such that
\begin{equation} \label{56}
 \left|\oint_{\Gamma_1} Q_n(z) P_n(z) \Phi_n (z) f(z)\ dz\right|^{1/n }  \to
\exp\{ - \min_{\Gamma} (U^{\mu + \lambda_1} + 2\varphi)  \} =
\exp\{ - \min_{\Gamma} (U^{\mu+ \lambda})\}
\end{equation}
where $\lambda = \lambda_1 +\lambda_2.$  We multiply this relation by $|c_{n,k}|^{1/n}$ and write it in terms of integrals
\begin{equation} \label{57}
I_{n,k} = I_{n,k}(G_n)  = \oint_{\Gamma_k} q_{n,k}(z) G_n(z) f_k(z) dz =
c_{n,k}\oint_{\Gamma_k} Q_{n,k}(z) G_n(z) f_k(z) dz
\end{equation}
with $G_n = P_n\Phi_n$ (compare to \eqref{54} above). Taking into account \eqref{53} we come to
\begin{equation*} \label{}
 \left|I_{n,1}\right|^{1/n } \ \to \  e^{- m_1} \quad \text{where} \quad m_1 =
 u_1 + \min_{\Gamma_1 } U^{\mu_1+ \lambda}
\end{equation*}
($m_1$ is the same constant as in \eqref{53} for $k=1$). For the second integral of $q_{n.2}$ over $\Gamma_2$ with the same $G_n = P_n\Phi_n$ we need only the upper bound
\begin{equation*} \label{}
\ol {\lim}  \left|I_{n,2}\right|^{1/n } \  \leq  \  e^{- m_2} \quad \text{where} \quad m_2 =
 u_2 + \min_{\Gamma_2 } U^{\mu_2+ \lambda}.
\end{equation*}
Since $m_1 < m_2$ the last two relations combined prove that $I_{n,1} +
I_{n,2} \ne 0$ for large enough $n \in \Lambda_1.$ This will contradict
orthogonality relations if $\deg G_n \leq 2n$. This inequality may
certainly be satisfied by our construction of polynomials $P_n, \Phi_n$
if $\mu_1 \ne \lambda_1$. In this case we come to a contradiction
showing that $m_1 = m_2.$

The exceptional case $\mu = \lambda_1$ require significant additional efforts (for a moment it is not clear how to avoid complications). The problem is that we can not use assertion of the lemma \ref{7} for the following reason. Suppose, like in case $\mu \ne \lambda_1$, we have selected a spherically normalized sequence of polynomials $\Phi_n$ with $\frac 1n \mc X(\Phi_n) \to \lambda_2$ and zeros on $\Gamma_2$.  For this sequence we have \eqref{55} uniformly in a neighborhood of $\Gamma_1$ and by lemma \ref{7} we can select sequence of polynomials $P_n, \ n\in \Lambda_1 \subset  \Lambda$ such that $\quad\frac1n\,\mc{X}\(P_n\)\overset{*}{\ \to\ }\lambda \quad$ as
$\quad n\to \infty, \ n\in \Lambda_1 $ and  \eqref{48} holds with $\sigma = \lambda.$ Then we define
$G_n = P_n\Phi_n$ and we can not use this polynomial to come to a contradiction with orthogonality conditions. Polynomial $P_n$ whose exact degree is out of control is selected after $\Phi_n$ and we can not prove that $\deg G_n \leq 2n.$

For this reason to prove that $m_1 = m_2$  true without any restriction on $\vec \mu$ we will use the proof of the lemma \ref{7}. More exactly, we will use the method of the proof of lemma \ref{7} combined with certain modification of the basic equilibrium problem.

We consider more general vector equilibrium problem which assign different total masses to components of the vector equilibrium measure.  The settings associated with an arbitrary vector $\vec t$ was outlined in the introduction. Here we need one-parametric family of equilibrium problems.
Let $t > 0$ be small enough; we introduce a vector $\vec t = (1+t, 1- t)$ whose components will represent total masses $t_k = \lambda_k(F_k)$ in the equilibrium problem associated with the class of vector measures
\begin{equation} \label{58}
{\vec {\mc M}}^{t}={\vec {\mc M}}^{\vec t}(\vec{F}) =
\left\{\vec{\mu}
=(\mu_1,\mu_2):\mu_j\in\mc {M}^{t_j}\(F_j\)\right\}, \quad t_1 = 1+ t, \ t_2 = 1-t
\end{equation}

All the definitions of parameters associated the equilibrium problems
\eqref{4} - \eqref{8} are modified in a clear way for this more general
equilibrium problem and become functions of $t$. Let $\vec \Gamma(t) =
\vec\Gamma(t, \vec f)$ be the extremal compact set  associated with max -
min energy problem (see \eqref{4} - \eqref{8}) in class ${\vec {\mc
M}}^{t}$ and ${\vec \lambda}^t  = (\lambda_1^t, \lambda_2^t).$
Actually, vector equilibrium measure and compact set depend on $t$
analytically.  For a moment we need only to know that the dependence
$\vec \Gamma(t)$ and  ${\vec \lambda}^t $ is continuous at $t=0.$ In
particular, if Angelesco condition is satisfied for the extremal vector
compact set $\Gamma = \Gamma(0)$ then there is $\delta >o$ such that
$\Gamma(\vec t)$ depends on $\vec t$ continuously for $| t | < \delta$
(Hausdorff metric is assumed in space vector compact sets; see definition
in sec. 4.1 for details).

It follows that constants $m_k = m_k(t)$ in \eqref{53} are continuous
as functions of $t$ (defined with the same vector measure $\mu$  from
\eqref{52}). Thus, we can select $t \in (0,1)$ such that inequality
$m_1(t) < m_2(t)$ is still valid.  We fix this value of  $t$.

Now, we can use the same approach as in the proof in case $\mu \ne \lambda_1$.
The situation here is quite similar to the proof of lemma \ref{7} and there is no need to repeat again all the details. In short, we are using the following difference between cases $t =0$ and $t>0$: the unit measure $\mu_1 = \lambda_1$ (which remains the same) can not be equal to the (new) equilibrium measure $\lambda_1$ since the normalization $|\lambda| = 1+t >1$ is different.
Thus, we, indeed, are back in case $\mu_1 \ne \lambda_1$ which was fairly treated above. By this the proof of equality $m_1 = m_2$ is completed. \\

It remains to prove that $\mu = \lambda$ if we know that $m_1 = m_2$. If it is not true then we may assume without loss of generality that $\mu_1 \ne  \lambda_1.$ From this assumption we will come to a contradiction.

We follow a procedure similar to the one used above to prove that $m_1 = m_2$. The first step is the same; we select a spherically normalized sequence of polynomials $\Phi_n$ with $\frac 1n \mc X(\Phi_n) \to \lambda_2$ and zeros on $\Gamma_2$.  For this sequence \eqref{55} is valid uniformly in a neighborhood of $\Gamma_1$

A change in the selection of $\sigma$ is made in the next step. We will apply lemma \ref{6} with $\Gamma = \Gamma_1$, $\ Q_n = c_{n,k}^{-1}q_{n,k}$ from \eqref{52}, $\mu = \mu_1$ from \eqref{53} and
\begin{equation*}
\sigma = (1-t)\lambda_1 + t\eta. 
\end{equation*}
where $\eta$ is the balayage of $\lambda_1$ onto $\Gamma_2$ and $ t >
0$ is small enough so that conditions of lemma \ref{6} are satisfied.
It follows from the lemma  that there is a (sub)sequence of polynomials
$P_n$ with $n \in \Lambda_1 \subset \Lambda$ and $\frac 1n \mc X(P_n)
\to \sigma$ such that
\begin{equation*} \label{}
 \left|\oint_{\Gamma_1} Q_n(z)\ P_n(z) \Phi_n (z)\ f(z)\ dz\right|^{1/n } \ \to \
\exp\{ - \min_{\Gamma} (U^{\mu +\sigma} + 2\varphi)  \} =
\exp\{ - \min_{\Gamma} (U^{\mu+ \lambda +  t\nu}\}
\end{equation*}
where $\lambda = \lambda_1 +\lambda_2$ and $\nu = \eta - \lambda_1$.  We multiply this relation by $|c_{n,k}|^{1/n}$ and write it in terms of integrals $I_{n,k}$ defined in \eqref{57} with $G_n = P_n\Phi_n$. Using same arguments as above (following \eqref{57})
to we come to
\begin{equation} \label{59}
 \left|I_{n,1}\right|^{1/n } \ \to \  e^{- m_1(t)} \qquad \text{and} \qquad
\varlimsup_{n\to\infty} \  \left|I_{n,2}\right|^{1/n } \  \leq  \  e^{- m_2(t)}
\end{equation}
where
\begin{equation*} \label{}
m_1(t) =  u_1 + \min_{\Gamma_1 }\  U^{\mu_1+ \lambda +t\nu} \qquad \text{and}
\qquad m_2(t) =  u_2 + \min_{\Gamma_2 }\ U^{\mu_2+ \lambda +t\nu}.
\end{equation*}
According to the definition of $\nu$ and properties of the balayage we
have $U^\nu(z) = c =\const$ on $\Gamma_2$ and $U^\nu(z) = c  -
G^\eta(z)$ on $\Gamma_1$ where $ G^\eta(z)$ is the Green potential of
the measure $\eta$ with respect to the domain $\Omega = \ol {\CC}
\setminus \Gamma_2.$ We have $ G^\eta(z) > 0$ in $\Omega$ and,
therefore, $U^\nu(z) < c$ on $\Gamma_1$. It follows
\begin{equation*} \label{}
m_1(t) =  u_1 + \min_{\Gamma_1 } (U^{\mu_1+ \lambda}  +t U^\nu) <
c +  u_1 + \min_{\Gamma_1 }\  U^{\mu_1+ \lambda}  = c + m_1
\end{equation*}
At the same time we have
\begin{equation*} \label{}
m_2(t) =  u_1 + \min_{\Gamma_1 } (U^{\mu_1+ \lambda}  +t U^\nu) =
c +  u_1 + \min_{\Gamma_1 }\  U^{\mu_1+ \lambda}  = c + m_2
\end{equation*}
Since $m_1 = m_2$ we have  $m_1(t) < m_2(t)$ for $t>0.$ Now relations \eqref{59} contradict orthogonality relations and proof s completed.

\subsubsection{Proof of theorem 1 (general case)}
Generalization of the proof from the case $s=2$ to the general case $s
\geq 2$ is rather straightforward. In short, all the arguments remain
valid with some, mostly obvious, modifications.

First, in case $s > 2$ we have to use more general vector equilibrium
problem which assigns total masses to components of the vector
equilibrium measure according to components of a vector $\vec t = (t_1,
\dots, t_s)$. So, we are going to have several parameters instead of
one. We will consider vectors $\vec t = (t_1, \dots, t_s)$ near the
point $\vec t_0 = (1, \dots, 1)$ with condition $t_1+ \dots + t_s = s.$
Thus, class \eqref{56} of vector measures generalizes to

\begin{equation} \label{60}
{\vec {\mc M}}^{\vec t}={\vec {\mc M}}^{\vec t}(\vec{F}) =
\left\{\vec{\mu}
=(\mu_1,\mu_2, \dots, \mu_s ):\mu_j\in\mc {M}^{t_j}\(F_j\)\right\}, 
\end{equation}
The definitions of $\vec { \Gamma } (\vec t) = \vec\Gamma (\vec t, \vec f),$  ${\vec \lambda}(\vec t)  = (\lambda_1({\vec t}), \dots , \lambda_s({\vec t}))$ (and other parameters) associated the equilibrium problems \eqref{4} - \eqref{8} are modified according to this more general equilibrium problem and become functions of $\vec t$.
In particular, after a sequence $\Lambda \subset \N$ is selected such that \eqref{52} is satisfied, we define
functions
\begin{equation} \label{61}
m_k = m_k(\vec t) =  u_k + \min_{\Gamma_k }\  U^{\mu_k+ \lambda},
\quad \lambda = \lambda_1 + \dots + \lambda_s
 \qquad k = 1, 2, \dots, s.
\end{equation}
where $\Gamma_k$ and $\lambda_k$ are functions of $\vec t$.

Like in case $s=2$ we prove, first. that at $\vec t = \vec t_0$ the
numbers $m_k(\vec t)$ are all equal.  The proof follow same path as in
case $s =2$. We assume the contrary and then, playing with components
of $\vec t$ in a neighborhood of $t_0$ (subject to the condition $t_1+
\dots + t_s = s$) we can find a particular $\vec t$ for which one of
the numbers $m_k(\vec t) $ will be strictly larger that others. Then we
come to a contradiction with orthogonality relations like in case $s =
2$.

Next, under the assumption $ m_1(\vec t_0) = \dots =  m_s(\vec t_0)$ we have to prove that $\vec \mu = \vec \lambda({\vec t_0}).$ Again, we assume the contrary, which means that there is an index, say, $k=1$ such that $\mu_1 \ne \lambda_1.$ From here we need to come to a contradiction.

This part of the proof require just one modification in the choice of measure $\eta$. Instead of the
balayage of $\lambda_1$ onto $\Gamma_2$ in case $s=2$ we define $\eta$ as the
balayage of $\lambda_1$ on the union $\cup_{k=2}^s \Gamma_k$. By this the proof of theorem 1 is completed.


\section{Extremal compact set $\vec\Gamma(\vec f)$ \\
and associated Riemann surface $ \mc R(\vec f)$. }

First, we will prove lemma \ref{lem2} on existence of the extremal compact set $\vec\Gamma(\vec f)$  maximizing the functional of equilibrium energy $\mc{E}[\vec{F}]$  (see \eqref{4} - \eqref{8} in the class $\mc F (\vec f)$ associated with $\vec f.$ Then, we prove that $\vec\Gamma(\vec f)$ has the $S$-property \eqref{14}.


As we noted in the introduction we need rather the $S$-property of $\vec\Gamma(\vec f)$
and lemma \ref{2} is just a convenient way to define $\vec\Gamma(\vec f)$.
It was also noted that in general settings, this extremal compact set in max-min energy problem will not have $S$ property induced by matrix $A$ in \eqref{4}. However,  under the Angelesco condition it is true and it is a direct corollary of known results. We will refer to the paper \cite{Rak12} where the other references may be found.

An assertion of the lemma \ref{lem2} would require some comments. Actually, the lemma is also  a corollary of known facts and techniques but there is no single reference which may be applied. We make a few remarks explaining the reduction in the next section.

\subsection{Proof of lemma \ref{lem2} and $S$-property of $\Gamma(\vec f)$}
The key point in the proof of lemma \ref{2} is continuity of the energy
functional $\mc{E}[F]$ in the Hausdorff metric.

 \subsubsection{Hausdorff metric in a space of vector compact sets.}
To introduce a version of the vector Hausdorff metric on the set $\vec
{\mc F} = \vec {\mc F}(\vec f) $ of vector compact sets $\vec F = (f_1,
\dots, F_s)$ we use the usual scalar Hausdorff metric. For two compact sets
$F_1,F_2\subset \CC$ their Hausdorff distance $\delta_H$ is defined as
\begin{equation}
\label{62}
{\delta}_H\(F_1, F_2\)=\inf\left\{\delta>0: F_1\subset
\(F_2\)_{\delta}, F_2\subset\(F_1\)_{\delta}\right\}
\end{equation}
where $(F)_{\delta}  = \{z\in \CC:\min_{\zeta\in F} |z -\zeta|<\delta\}$ is $\delta$-neighborhood of $F.$

An associated distance $d_H$ between two vector compact sets $\vec F^1$ and $\vec F^2$ with $s$ components is defined as follows
\begin{equation}\label{63}
d_H(\vec F^1, \vec F^2) = \sum_{k =1}^s \delta_H( F_k^1, \vec F_k^2)
\end{equation}
The properties of the metric space of vector compact sets are essentially
same as for the scalar case. In particular, set of all vector compact sets
in a closed disc $\ol D_R = \{z: \ |z| \leq R\}$ of radius $R>0$ is a
compact. The same is true for the class $\vec F \in {\vec {\mc F}}_R$
which consists of vector compact sets $\vec F \in \vec {\mc F}$ such their
components are in $ \ol D_R.$

 \subsubsection{Continuity of vector equilibrium energy in Hausdorff metric.}

We need only a ``point wise'' continuity stated in the following lemma.

\begin{lemma} \label{lem8}
For any $\Gamma \in \vec {\mc F}$ and any $\varepsilon > 0$ there is $\delta > 0$ such that for any $F \in \vec { \mc F}$ with ${\delta}_H\(F_1, F_2\) < \delta$ we have
$|\mc{E}[\vec\Gamma] - \mc{E}[\vec F]| < \varepsilon. $
\end{lemma}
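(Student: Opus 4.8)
The plan is to prove the two one-sided estimates $\mc E[\vec F]\ge\mc E[\vec\Gamma]-\varepsilon$ and $\mc E[\vec F]\le\mc E[\vec\Gamma]+\varepsilon$ separately, because the functional $\mc E[\,\cdot\,]$ behaves very differently in the two directions. Equivalently, I will show that $\mc E[\vec F^{(n)}]\to\mc E[\vec\Gamma]$ for every sequence $\vec F^{(n)}\to\vec\Gamma$ in $d_H$ with $\vec F^{(n)}\in\vec{\mc F}$; after restricting attention to a fixed disc $\ol D_R$ containing all the compacta, weak-$*$ compactness of probability measures on $\ol D_R$ will be available throughout. Since each component of every $\vec F^{(n)}$ is a continuum of positive capacity, $\mc E[\vec F^{(n)}]$ is finite.

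The lower estimate is the ``free'' direction and follows from lower semicontinuity of the energy. Let $\vec\lambda^{(n)}=\vec\lambda_{\vec F^{(n)}}$ be the equilibrium measures from Lemma~\ref{lem1}. Passing to a subsequence along which $\lambda^{(n)}_k\overset{*}{\to}\mu^*_k$ for each $k$, I note that $\supp\lambda^{(n)}_k\subset F^{(n)}_k$ and $F^{(n)}_k\to\Gamma_k$ in $\delta_H$, so any compact subset of $\CC\setminus\Gamma_k$ is eventually disjoint from $F^{(n)}_k$; hence $\mu^*_k$ is a unit measure supported on $\Gamma_k$ and $\vec\mu^*\in\vec{\mc M}(\vec\Gamma)$. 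The kernel $-\log|x-y|$ is lower semicontinuous and bounded below on $\ol D_R\times\ol D_R$, so each mutual energy $[\,\cdot\,,\cdot\,]$ is lower semicontinuous under joint weak-$*$ convergence, and since $\mc E=\sum_{i,j}a_{ij}[\mu_i,\mu_j]$ has nonnegative coefficients it is lower semicontinuous as well. Therefore $\mc E[\vec\Gamma]\le\mc E(\vec\mu^*)\le\liminf_n\mc E(\vec\lambda^{(n)})=\liminf_n\mc E[\vec F^{(n)}]$, which is the lower bound.

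The upper estimate is the substantive part, and it is exactly where the structure of $\vec{\mc F}$ enters: I must \emph{construct} admissible competitors $\vec\nu^{(n)}\in\vec{\mc M}(\vec F^{(n)})$ with $\limsup_n\mc E(\vec\nu^{(n)})\le\mc E[\vec\Gamma]$. This direction genuinely fails for arbitrary Hausdorff-convergent compacta (finite point sets converging to a segment have infinite minimal energy in the limit), so the non-degeneracy of the class is essential. Starting from the fixed minimizer $\vec\lambda=\vec\lambda_{\vec\Gamma}$, I spread each component onto $F^{(n)}_k$ by a near-identity map moving points less than $\delta_n\to0$ (possible since $\Gamma_k\subset(F^{(n)}_k)_{\delta_n}$), producing unit measures $\nu^{(n)}_k$ on $F^{(n)}_k$ with $\nu^{(n)}_k\overset{*}{\to}\lambda_k$. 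The energy difference is $\sum_{i,j}a_{ij}\iint\log\frac{|x-y|}{|T^{(n)}_i(x)-T^{(n)}_j(y)|}\,d\lambda_i(x)\,d\lambda_j(y)$, and the contribution of pairs with $|x-y|\ge\rho$ is harmless: there $|T^{(n)}_i(x)-T^{(n)}_j(y)|\ge|x-y|-2\delta_n$, so the integrand is at most $\log\frac{1}{1-2\delta_n/\rho}\to0$.

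The one delicate point, and the main obstacle, is the near-diagonal region $|x-y|<\rho$, where a naive transport can collapse mass and even create atoms, forcing $\iint_{|x-y|<\rho}\bigl(-\log|T^{(n)}_i(x)-T^{(n)}_j(y)|\bigr)\,d\lambda_i\,d\lambda_j$ to blow up. To defeat this I first regularize, replacing $\vec\lambda$ by a mollification $\vec\lambda^{\,\tau}$ with $\mc E(\vec\lambda^{\,\tau})\to\mc E(\vec\lambda)$ as $\tau\to0$, and then distribute the (now absolutely continuous) mass onto the pieces of $F^{(n)}_k$ \emph{not} by pointwise transport but by replacing the mass in each small cell by a non-atomic, well-spread measure carried by the corresponding piece of $F^{(n)}_k$ (a local equilibrium measure of that piece). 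Because every component of $F^{(n)}_k$ is a continuum containing at least two of the finitely many branch points of $f_k$, these pieces have capacity bounded below on the scale of the covering and therefore support measures of uniformly bounded energy density, yielding a uniform bound of the form $\nu^{(n)}_k(D(z,r))\le C_\tau r^2$ down to the smallest scales. This bound makes $\iint_{|x-y|<\rho}(-\log|x-y|)\,d\nu^{(n)}_i\,d\nu^{(n)}_j\le\omega(\rho)$ uniformly in $n$, so the near-diagonal term is negligible. Letting first $n\to\infty$, then $\rho\to0$, then $\tau\to0$ gives $\limsup_n\mc E(\vec\nu^{(n)})\le\mc E[\vec\Gamma]$, hence the upper bound; combined with the lower bound this yields $|\mc E[\vec\Gamma]-\mc E[\vec F]|<\varepsilon$ for $d_H(\vec\Gamma,\vec F)$ small enough. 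Making the local-spreading step quantitative — i.e. producing competitors with atom-free, uniformly bounded energy density — is the part I expect to require the most care, and it is precisely there that the admissibility conditions defining $\vec{\mc F}$ are used.
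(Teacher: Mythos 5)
Your overall strategy works, but it follows a genuinely different route from the paper. The paper treats both one‑sided estimates symmetrically by componentwise balayage: it sweeps $\vec\lambda_{\vec F}$ onto $\vec\Gamma$ and $\vec\lambda_{\vec\Gamma}$ onto $\vec F$, gets the first inequality in each pair from the minimizing property of the equilibrium measure, and for the second invokes (summed over components) the continuity lemmas 9.4--9.6 and Theorem 9.8 of \cite{Rak12}, which say that balayage onto a Hausdorff‑close admissible compact set changes the energy by at most $\varepsilon/2$. That makes the proof two lines long modulo the cited reference. Your lower bound, by weak‑$*$ compactness of $\vec\lambda_{\vec F^{(n)}}$ and lower semicontinuity of the energy (principle of descent), is more elementary than the paper's argument for that direction and correctly isolates the fact that only the upper bound uses the structure of $\vec{\mc F}$. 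Your upper bound replaces balayage by an explicit mollify‑and‑redistribute construction of competitors on $\vec F^{(n)}$; this is a hands‑on substitute for the balayage continuity estimate, buys self‑containedness at the cost of substantially more work, and is exactly where the whole difficulty of the lemma sits.

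One concrete claim in that construction is false as stated: a unit measure carried by a finite union of arcs cannot satisfy $\nu^{(n)}_k(D(z,r))\le C_\tau r^2$ at all scales, since such a bound forces absolute continuity with respect to area. What the admissibility of $\vec{\mc F}$ actually yields is weaker but sufficient: a piece of $F^{(n)}_k$ meeting $D(z,\delta_n)$ and contained in a component of diameter bounded below must contain a continuum of diameter $\gtrsim r$ inside $\ol D(z,r)$, hence of capacity $\gtrsim r$, and the equilibrium measure of such a continuum obeys a Beurling‑type estimate $\omega(D(z,\rho))\le C\sqrt{\rho/r}$ — exponent $1/2$, not $2$. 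This still gives uniform integrability of the logarithmic kernel near the diagonal, so the defect is repairable; but the scale‑by‑scale bookkeeping (cell self‑energies, adjacent‑cell interactions, and the cross terms between components $i\ne j$, which in Lemma~\ref{lem8} are not assumed disjoint) remains to be written out, and that is precisely the work the paper sidesteps by citing \cite{Rak12}.
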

\begin{proof}
The assertion of the lemma is similar to theorem 9.8 in \cite{Rak12}. More exactly theorem 9.8 is the scalar weighted version of the lemma \ref{lem8}. We do not have external field our case. Generalization of the theorem 9.8 to the vector situation is rather straightforward. We need to prove vector versions of lemmas 9.4 - 9.6 in section 9.4 in \cite{Rak12}. To do that we can, for instance, use component wise balayage
for vector measures as follows.

Let $\vec\lambda$ and $\vec\mu$ be equilibrium measures of $\vec
\Gamma$ and $\vec F$ respectively. Let $ \mu'_k$ be the balayage of
$\lambda_k$ onto $F_k$ and let $\lambda'_k$  be the balayage of $
\mu_k$ onto $\Gamma_k$. Then we have estimates
\begin{equation*}
\mc{E}[\vec\Gamma] \leq \mc{E}(\vec {\lambda'})  \leq \mc{E}[\vec F] + \varepsilon/2,
\qquad
\mc{E}[\vec F] \leq \mc{E}(\vec {\mu'})  \leq \mc{E}[\vec\Gamma] + \varepsilon/2.
\end{equation*}\label{}
First inequality in each pair is the extremal property of equilibrium measure. Second inequality in each pair is a vector version of the lemmas 9.4 - 9.6 in section 9.4 in \cite{Rak12} which is obtained by taking sum over components. 
\end{proof}




 \subsubsection{Proof of lemma \ref{2}.}
To complete the proof of lemma \ref{2} we consider a maximizing sequence $\vec\Gamma_n$  in the extremal problem \eqref{6}, that is,
\begin{equation} \label{64}
\mc{E}[\vec\Gamma_n] \to \mc E = \sup_{\vec{F}\in\mc{F}}\mc{E}[\vec{F}].
\end{equation}
First we prove that for any such sequence is bounded. It is enough to prove that the sequence of corresponding supports $\vec\Gamma^1_n$ remains bounded.
We note that actually maximizing sequence may be selected in the convex hull of $e = \cup e(f_k)$.  We need only to show that there is a finite positive $R$ such that $\vec\Gamma^1_n \subset D_R = \{z: |z| \leq R\}$ for large enough $n$ assuming that $e = \bigcup e_k \subset D_{1}$

For an arbitrary $\vec F \in \vec{\mc F}$ we denote as usual by $\vec \lambda = \vec \lambda_{\vec F}$ the extremal (equilibrium) measure in \eqref{6} and further,  $W_k(z) = U^{\lambda_k + \lambda}(z)$ where $\lambda = \lambda _1 +\dots+ \lambda_s$,
$w_k = \min_{z \in F_k} W_k(z)$
(see \eqref{12} and \eqref{13}).

Measure $\mu = \lambda_k + \lambda$ is a positive Borel measures in
plane with total mass $s+1$. If $\mc M$ is set of all such measures
$\mu$ then maximal value for $\min_{z \in F_k} U^{\mu}(z)$ over the
space $\mc M$ is equal to $1/\cop(F)$ (and it is assumed for $\mu =
(s+1)\omega$ where $\omega$ is the Robin measure of $F$). It follows
$w_k \leq \log (1/\cop(F)) \leq C\ $ where $C$ is a constant depending
on $\vec e$ and not depending on $\vec F \in \vec {\mc F}.$

Let $\vec F = \vec \Gamma_n \in \vec {\mc F}$ be a member of maximizing
sequence. By definition of  $\vec {\mc F}$ each component of
$\vec\Gamma^1_n$ consists of finite number of continuums, each one
containing at least two points of corresponding $e_k.$ Now, if (for
some $k$) $\Gamma \subset (\vec\Gamma^1_n)_k$ is such a continuum and there
is a point $z \in \Gamma$ with $|z| \geq R$ then $\cop \Gamma \geq
(R-1)/4 $ and $w_k \leq \log(4/(R-1)).$ On the other hand, it follows
from definitions that for any $\vec F  \in \vec {\mc F}$ we have  $\mc
E[\vec F] = w_1 +\dots + w_s$. Thus, if sequence $\Gamma_n$ is not
bounded then we have $\mc{E}[\vec\Gamma_n] \to -\infty$ (along some
subsequence) and, therefore, $\mc E =
\sup_{\vec{F}\in\mc{F}}\mc{E}[\vec{F}] = - \infty$ which is a
contradiction since $\mc E $ is evidently finite.

Finally, combining the assertions made above we conclude that some
subsequence of minimizing sequence $\vec\Gamma_n$ converges in
Hausdorff metric to a vector compact set $\Gamma \in \vec{\mc F}$ and by
continuity of $\mc E$ we have $\mc E(\vec \Gamma) = \mc E $ (see
\eqref{64}). This complete the proof of lemma \ref{2}.

\subsubsection{$S$-property of $\vec \Gamma$.}
Each component $\Gamma_k$ of the extremal compact set $\vec \Gamma$ defined by the extremal problem \eqref{8} is a solution of a scalar weighted problem if we assume that the other components are fixed. For instance, with respect to variations of the first component ($k=1$) we have the following extremal property of $\Gamma_1$
\begin{equation*} \label{}
\mc{E}[\vec{\Gamma}]= \mc{E}[(\Gamma_1, \Gamma_2, \dots, , \Gamma_s)]=
\max_{F_1\in \mc F(f_1)}\mc{E}[(F_1, \Gamma_2, \dots, , \Gamma_s)],
\end{equation*}
and the same is true for $k =2, \dots, s$ (follows directly from definitions). Also, the $k$-th components $\lambda_k$ of the vector equilibrium measure $\vec \lambda$ provides the minimum for the total energy
in class $\mc M(\Gamma_k)$ for the fixed other components. The equilibrium energy
$\mc E(\vec\lambda)$ as function of the $k$-th component $\lambda_k$ is represented as
\begin{equation*} \label{}
\mc E(\vec \lambda) = \mc E_{\varphi_k}(\lambda_k) + C_k
\qquad \text{where} \quad  \varphi_k(z) =  \frac 12 \sum^s_{i\ne k } U^{\lambda_i}
\end{equation*}
and $C_k$ does not depend on $\lambda_k.$

Under the Angelesco condition external field $\varphi_k$ is harmonic in a neighborhood of $\Gamma_k$
and the $S$ property of $\vec \Gamma$ with respect to $k$-th coordinate follows by the theorem 3.4 in \cite{Rak12}.\\


\subsection{Extremal compact set in scalar case $s=1$.}  
Representation of $\vec\Gamma(\vec f)$ as a whole or its components is
an interesting and complicated problem. It is connected to many other
problems and altogether they constituting a field in classical complex
analysis. We will review a few particular results in the field related
to study of $\vec\Gamma(\vec f)$ the Angelesco situation. The case is
much simpler then the general one and may be essentially reduced to
weighted scalar case $s =1$. Indeed, each components $\Gamma_k$ is the
scalar extremal compact set in the harmonic external field $\varphi_k$
generated by potentials of equilibrium measures of other components.

We will go into some details related to the case  and we will begin
with nonweighted situation when Hermite--Pad\'e polynomials become
Pad\'e polynomials, that is numerator and denominator of diagonal
Pad\'e approximants at infinity for a single function element $f = f_1
\in \mc A$ not depending on $n$.


\subsubsection{Stahl's theorem. Extremal compact set $\Gamma(f)$. }
In case $s=1$ definition \eqref{2} with $p_n = - q_{n,0}$ and $q_n = q_{n,1}$  become
\begin{equation*}
R_n(z):= (q_n f - p_n)(z) = O\(\frac1{z^{n + 1}}\),
\label{}
\end{equation*}
 The rational function $\pi_n = p_n/q_n$ is the (diagonal) Pad\'e approximant to $f$ at infinity of order $n$

One of the main problems in the theory of Pad\'e approximants in 1960 -
1970-th was the convergence  problem for function $f\in  \mc A.$ The
problem is essentially equivalent to the problem of zero distribution
of denominators $q_n$ and it may be viewed as a particular case the
zero distribution problem for Hermite--Pad\'e polynomials. First
results in this direction were obtained by J. Nuttall (for functions
with quadratic branch points) who also made a general conjecture (see
\cite{Nu77} and \cite{Nu84}) for any  $f\in  \mc A$  we have
\begin{equation*}
\label{}
\pi_n\overset{\cop}{\to}f,\quad z\in\mbb{C}\setminus \Gamma(f),
\quad \text{where}\quad
\cop(\Gamma(f))=\min_{F \in \mathcal F }\cop(F)
\end{equation*}
where class $\mc F(f)$ of admissible cuts is same as above (see introduction) and $\overset{\cop}{\to}$ stands for convergence in capacity (on compact sets in the indicated domain).

General theorem on convergence of Pad\'e approximants, including the Nuttall's conjecture, has been proved by
H.~Stahl \cite{Sta85a}--\cite{Sta86b} (methods used in this paper are in part originated there). In particular, he proved the zero distribution formula
$$\frac 1n \mc X(q_n) \to \lambda \qquad \text{ where} \quad \lambda=\lambda_\Gamma$$
is the Robin (equilibrium) measure of the extremal compact set $ \Gamma(f)$. The (negative) equilibrium potential is (up to a constant) the Green function $g(z)$ for the complement to the extremal compact.
$$ w - U^\lambda(z) = g(z), \quad z \in \Omega = \overline\CC\setminus \Gamma(f)$$
Stahl obtained also a rate of convergence $\pi_n\overset{\cop}{\to}f.$ 

Now, we will consider in some details the geometric component of the
theorem. In other words we are interested in the geometric structure of
the minimal capacity compact set $\Gamma$. We restrict ourself to the case
of finite sets $e$ as we generally do in this paper. Assumption of the
original theorem Stahl's theorem in \cite{Sta86a}--\cite{Sta86b} was  $\cop e =
0$ what
is essentially more general and accordingly less constructive.
Characterization in terms of quadratic differential (see below) is
still valid, but becomes more complicated (differential is not
rational). The associated Riemann surface may also be introduced but
will not be closed. Here we do not discuss general case.

Minimal capacity property of $\Gamma(f)$ is equivalent to maximal
equilibrium energy, so that $\Gamma$ is exact scalar analogue of our vector
compact set $\vec \Gamma.$ Hence, the study of $\Gamma(f)$ would be the
first step in the study of the geometry of the vector compact set
$\vec\Gamma (\vec f).$ In scalar case $s=1$ without external field there is
a well developed theory.

Many part of this theory may be generalized, one way or another, to the
weighted case and, then, to the vector case $s > 1$ (for the weighted
case~\cite{GoRa87} and also~\cite{BaYa09},~\cite{BaStYa12},~\cite{Bus13},~\cite{Bus15a},~\cite{Bus15b}.)
However,
generalizations are not always obvious and very often exist only as
conjectures; see~\cite{Sta88},~\cite{Sta12}. Theory for the vector case does not exist yet; there are
several separated fragments. Case $s=1$ may serve as a good introduction to
the matter.

\subsubsection{ Extremal compact set $\Gamma(f)$ in terms of quadratic differential. }
The minimal capacity problem in class $\mc F(f)$ for finite $e(f)$ is a
direct generalization of a classical Chebotarev problem on continuum of
minimal capacity containing set $e$. Solution of the problem and its
characterization in terms of critical trajectories of a quadratic
differential was well known since 1930-th; see references and details
in \cite{Str84}. Solution in class $\mc F(f)$ is essentially similar
and is presented in the following lemma.

\begin{lemma} \label{lem9}
Let $f \in \mc A$, $e(f) = \{a_1, a_2, \dots, a_p\}$ and  $A(z) = (z-a_1)(z- a_2) \cdots(z - a_p).$ Then there exists a polynomial
\begin{equation*}
V_f(z) = (z-v_1) (z-v_2) \dots (z-v_{p-2}) \qquad \text {where}\quad v_j = v_j(f).
\label{}
\end{equation*}
of degree $p-2$ depending on $f$ such that the extremal compact set $\Gamma(f)$ is a union of some of critical trajectories of the quadratic differential $ - (V/A)\,(dz)^2$ where $V = V_f$.

Moreover, $ - (V/A)\,(dz)^2$ is the quadratic differential with closed trajectories.
\end{lemma}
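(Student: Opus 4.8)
The plan is to reduce the lemma to the $S$-property of the extremal compact $\Gamma=\Gamma(f)$ (which in the scalar field-free case reads $\partial_{n_1}U^\lambda=\partial_{n_2}U^\lambda$ on the open analytic arcs of $\Gamma^0$, and holds by Stahl's theorem, cf.\ \cite{Sta86a}--\cite{Sta86b} and \cite{Rak12}) together with the classical description of Chebotarev continua by quadratic differentials, cf.\ \cite{Str84}. Let $g(z)=w-U^\lambda(z)$ be the Green function of $\Omega=\ol\CC\setminus\Gamma$ with pole at infinity and let $G(z)$ be the (multivalued) complex Green function with $g=\Re G$. Since the periods of $G$ are purely imaginary additive constants and $G=\log z+w+o(1)$ near infinity, the derivative $G'(z)$ is single valued and holomorphic in $\Omega$, with $G'(z)=1/z+O(1/z^2)$ at infinity. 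I set $R(z)=(G'(z))^2$, the candidate for $V/A$.

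The central step is to show that $R$ extends to a rational function. First I would verify that $R$ continues analytically across each arc of $\Gamma^0$. On such an arc $g\equiv 0$, so, writing the gradient as a complex number $\nabla g=\partial_x g+i\partial_y g$, one has $G'=\ol{\nabla g}$, and $\nabla g$ is normal to $\Gamma$; the $S$-property makes the two one-sided normal derivatives $|\nabla g|_{\pm}$ equal, whence a direct computation gives $(G'|_+)^2=(G'|_-)^2$ on the arc, so $R$ has matching boundary values and extends holomorphically (Morera). Thus $R$ is holomorphic on $\ol\CC$ off the finite set of endpoints and bifurcation points of $\Gamma$. Reading off the local behaviour there: at a branch point $a_j$ (a free endpoint of an arc) one has $g\sim c\,|z-a_j|^{1/2}$, so $G'\sim c'(z-a_j)^{-1/2}$ and $R$ has a simple pole; at a point where $k\ge 3$ arcs meet, $G'\sim(z-z_0)^{(k-2)/2}$ and $R$ has a zero of order $k-2$; and $R\sim 1/z^2$ at infinity. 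A function on $\ol\CC$ whose only singularities are these poles is rational, and since its poles are exactly the simple zeros of $A$, the product $V:=R\,A$ is a polynomial; from $R\sim z^{-2}$ and $A\sim z^{p}$ it follows that $V$ is monic of degree $p-2$, i.e.\ $V=V_f$ and $R=V/A$.

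It remains to identify $\Gamma$ with trajectories and to prove the closed-trajectory property. Parametrizing an arc of $\Gamma^0$ by arclength $z(s)$ and using $g\equiv0$ there, the same computation yields $R(z(s))\,z'(s)^2=-|\nabla g|^2<0$, so $-(V/A)(z(s))\,z'(s)^2=|\nabla g|^2>0$; thus each arc of $\Gamma^0$ is a horizontal trajectory of $-(V/A)(dz)^2$, and $\Gamma$ is a union of critical trajectories joining the zeros $v_j$ of $V$ to the simple poles $a_j$ of $V/A$. For the final assertion I would note that the horizontal trajectories of $-(V/A)(dz)^2=-(G')^2(dz)^2$ are precisely the level lines $\{g=c\}$: along such a line $G'z'=i\,d\tilde g/ds$ is purely imaginary, so $-(G')^2z'^2=(d\tilde g/ds)^2\ge0$. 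Since $g$ is single valued with a single logarithmic pole at infinity and no critical point in $\Omega$ (all $p-2$ zeros of $V$ lie on $\Gamma$), the level sets $\{g=c\}$, $c>0$, are closed analytic Jordan curves; equivalently, in the coordinate $u=1/z$ the differential behaves like $-du^2/u^2$ at $u=0$, a double pole with negative coefficient whose trajectories are circles. Hence every regular trajectory is closed, which is the asserted closed-trajectory property.

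The main obstacle I anticipate is the passage from the $S$-property to the global rationality of $R$: one must control the exceptional set, ensuring the only singularities are the simple poles at the $a_j$ and the zeros at bifurcation points with no accumulation, and justify that $\Gamma$ is a finite union of analytic arcs with finitely many singular points. These regularity facts are standard for Chebotarev-type extremal continua, and I would import them from \cite{Str84} and the $S$-property theory in \cite{Rak12} rather than reprove them.
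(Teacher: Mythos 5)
The paper does not actually prove Lemma \ref{lem9}: it only cites Stahl \cite{Sta85a} and Strebel \cite{Str84}, and mentions ``an alternative proof based on the max-min energy problem'' from \cite{PeRa94}, \cite{Rak12}. Your argument is essentially that alternative route: take the $S$-property of $\Gamma(f)$ as the input (established variationally, independently of the quadratic differential), show that $R=(G')^2$ continues across the arcs of $\Gamma^0$ because the two one-sided values of $G'$ differ only by sign, read off the local behaviour at endpoints and bifurcation points, conclude $R=V/A$ with $V$ monic of degree $p-2$, and identify the arcs of $\Gamma$ with critical trajectories. Note that this reverses the logical order of the paper's own exposition, which in Section 4.2.4 \emph{derives} the $S$-property \eqref{S} from Lemma \ref{lem9} via \eqref{Gre_n}; your direction is legitimate provided the $S$-property is obtained independently (as in \cite{Rak12}), and your importation of the structural facts about $\Gamma$ (finitely many analytic arcs, finitely many singular points) is on the same footing as the paper's own reliance on the literature.

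One genuine slip: your justification of the closed-trajectory property asserts that $g$ has ``no critical point in $\Omega$ (all $p-2$ zeros of $V$ lie on $\Gamma$).'' This is false when $\Gamma(f)$ is disconnected, which the class $\mc F(f)$ explicitly allows: if $\Gamma$ has $m\ge 2$ connected components, the Green function of $\Omega$ has exactly $m-1$ critical points in $\Omega$ (counted with multiplicity), and these are zeros of $V$ lying off $\Gamma$. The conclusion nevertheless survives, and for the reason you yourself give in the preceding clause: every trajectory in $\Omega$ lies in a level set $\{g=c\}$ of the single-valued harmonic function $g$, these level sets are compact in $\CC$ (since $g\to+\infty$ only at infinity), and hence every component avoiding critical points is a closed analytic Jordan curve, i.e.\ every regular trajectory is closed. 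The critical points of $g$ in $\Omega$ merely produce additional finite critical trajectories at positive levels $c$; they do not create non-closed regular trajectories. You should delete the parenthetical claim and rest the argument on compactness and single-valuedness of the level sets alone.
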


For a proof see \cite{Sta85a}; see also \cite{Str84}. An alternative proof based on max-min energy problem was presented in \cite{PeRa94}; see the review \cite{Rak12} for further details.

Connection of extremal compact sets with quadratic differentials is
fundamental. In particular, it allow us to introduce a reach differential
geometric context for the potential theoretic max-min energy problem. From
here we may obtain a number of equivalent reformulation of the problem (for
instance, in terms of moduli of families of curves) and this is a large
source of methods; see~\cite{Gol66}, \cite{Ku80},  \cite{Str84}, \cite{MaRa10},
\cite{Rak12} for a general discussion and for further references.

We will make a few short remarks extending assertions of lemma \ref{lem9} and showing to some extend a larger content.

First, we comment on quadratic differential with closed trajectories. Trajectory is a (maximal) curve $\gamma$ such that
\begin{equation*} \label{}
\frac{ V(z)}{A(z)}\  dz^2 < 0 \quad \text{or} \qquad \Re\int_a^z \, \sqrt{ {V(t)}/{A(t)}}\, dt = \const
\qquad \text {on}\quad  \gamma
\end{equation*}
Trajectories are also some particular geodesics of the metric  $ |V/A)|\,|dz|^2$ in plane. All such curves are analytic. Critical trajectories are analytic arcs connecting two points from the set of zeros of $AV$. For a {\it quadratic differential with closed trajectories} each noncritical trajectory is closed. In this case there is a finite signed (real) measure $\sigma$ such that
\begin{equation*} \label{}
U^\lambda (z) = \Re \int^z \, \sqrt{ {V(t)}/{A(t)}}\, dt  \qquad \text{or} \qquad
C^\lambda (z) : =\frac1{\pi i}\int_\gamma \frac{d\lambda (x)}{z - x}  = \sqrt{ {V(z)}/{A(z)}}
\end{equation*}
Any such measure $\lambda$ is a critical point of the logarithmic energy functional with respect to local variations with fixed set $e$ (signed critical measure). Also, potential of any such measure is a constant on any connected components of the support.

 For a given $A$ there is a large set of polynomials $V$ such that  $ - (V/A)\,(dz)^2$ is a quadratic differential with closed trajectories; such polynomials $V$ are dense in $\P_{p-2}$. Consequently, there is a large set of signed critical measures. More important for our current purposes are positive critical measures.

For a given $A$ there is a $p-2$-parametric family of polynomials $V$
such that  $ - (V/A)\,(dz)^2$ is a quadratic differential with closed
trajectories and associated measure $\lambda$ is positive. Using zeros
$v_i$ as parameters, this family may be represented as a union of
$3^{p-2}$ analytic bordered manifolds (cells). Polynomials $V$
associated with Stahl's compact sets $\Gamma(f)$ are included in this
family (as corner points of cells). Measure $\lambda$ in this case is
the Robin measure of $\Gamma(f)$;  see \cite{MaRa10} for further
details.

Next we consider some details related to the set of $V$-polynomials originated by extremal compact sets $\Gamma(f)$ of all functions $f \in \mc A$ having the same branch set $e$ and, hence, associated with the same polynomial $A$.

\subsubsection{Chebotarev's continuum. Set $\widehat {V}_e $. }
Lemma \ref{lem9} would serve as a constructive characterization of
$\Gamma(f)$ if we may determine corresponding polynomial  $V(f).$  Here
we discuss briefly a combinatorial component of the problem.

Polynomial $V(f)$ depends, first of all, on the branch set $e$ of
function $f\in\mathcal A$. It depends also on branch type of the
function. We say that two functions elements $f, g  \in \mc A(\ol {\CC}
\setminus e)$ at infinity have the same branch type if after analytic
continuation along any loop in $\ol {\CC} \setminus e$ both elements $f$
and $g$ remains unchanged or both change. If $f$ and $g$ have the same
branch type  then $\Gamma(f) = \Gamma(g)$. The inverse is not true,
functions with different branch type may have the same extremal compact
since not all the loops turns to be important.

Anyway, dependence of $\Gamma(f)$ on branch type of $f$ is reduced to a finite number of options. To make it formal, for a fixed $e$ consider the set of all functions $f \in \mc A(\ol {\CC} \setminus e)$. Then set of all associated extremal compact sets $\Gamma(f)$ is finite and the set
\begin{equation*} \label{}
\widehat {V}_e = \left\{ V (f) : \,\, f \in \mathcal A\,(\ol {\CC} \setminus e)  \right\}
\end{equation*}
of corresponding polynomials $V(f)$ is also finite (it is not entirely on the surface, but is still a simple corollary of known results). As a remark, the number of elements in this sets depend on location of points $a \in e$. It is easy to calculate the maximal (for given $p$) number $m_p$ for small number $p = \#(e)$; we have $m_2 = m_3 =1$, $m_4 = 2$ and $m_5 = 3$ (in situation of common position $m_p$ is equal to the actual number elements in $\widehat {V}_e$). Starting with $p =6$ this counting become more complicated. A general approach to this combinatorics may based of the analysis of the Chebotarev's continuum associated with the set $e$ which is.

Chebotarev's continuum  $\Gamma_e$ for a (finite) set $e$ is the continuum of minimal capacity in class of continua containing the set $e.$ The existence and characterization problem for such a continuum is known in the geometric function theory as Chebotarev's problem.  It was solved independently by Grotsch and Lavrentiev in 1930-th; see \cite{Str84} for details.

In particular, lemma \ref{lem9} were long known for the solution of Chebotarev's problem. Polynomial $V = V_e$ corresponding to $\Gamma_e$ is a particularly important element of the set $ \widehat {V} (e)$ since it may be used for construction of the other Stahl's compact sets associated with the same set $e$.

The structure of the Chebotarev's continuum $\Gamma_e$ depends on the
configuration of points in $e$ and in general also contains a
nontrivial combinatorial component. It is not our purpose here to
present a complete analysis of the situation; assume for simplicity
that the polynomial $V_e$ has simple zeros. This constitute a situation
of ``a common position'' for points $a_j \in e$ (configurations
$\{a_k\}$ not satisfying this condition have positive co-dimension).
Then the continuum $\Gamma_e$ is a union of $2p-3$ analytic arcs
$\gamma_k$
\begin{equation*} \label{}
\Gamma_e\  = \ \gamma_1\cup  \gamma_2 \cup \dots \cup \gamma_{2p-3}
\end{equation*}
Interiors parts of arcs $\gamma_k$ are disjoint and their end points
belong to the set of roots of the polynomial $A(z)V_e(z)$. Each $a \in
e$ is the end point of a single arc, each root $v$ of the polynomial
$V_e$ is common endpoint of three arcs.  We will say that $\gamma_i$ is
$a$-$v$ arc if one of its endpoints belong to $e$ and another one is a
zero of $V$. If both endpoints of an arc are zeros of $V$ we call it a
$v$-$v$ arc (the are no $a - a$ arcs in Chebotarev's continuum).
Totally, $\Gamma_e$ consists of $p$ arcs of type $a$-$v$ and $p-3$ arcs
of $v$-$v$ type. The graph (tree) generated by this collection of arcs
may serve as a definition of the combinatorial structure of $\Gamma_e$.
If this structure is known then points $v_i$ are uniquely defined by
the system of equations
\begin{equation} \label{Equ}
 \Re \ \int_{\gamma_k}\sqrt{\frac {V(t)}{A(t)}}\, dt\  =\  0, \qquad k = 1, 2, \dots, 2p - 4
\end{equation}
(with a the proper choice of orientation, sum of integrals over all $2p-3$ arc is equal to $\pi i$ , so that equality for $k = 2p -3$ is the corollary of the others).

All the other elements of the set $\widehat {V}_e$ different from
$\Gamma_e$ may be obtained using the following procedure of ``fusion'' of
connected zeros of $V_e$. We select any  $v$-$v$ arc in the Chebotarev's
continuum and eliminate this arc from $\Gamma_e.$ This will divide
$\Gamma_e$ into two disjoint connected components and, as a corollary, we
obtain a partition of set $e$ into two subsets $e = e_1 \cup e_2$. Using
this partition we introduce a modified minimal capacity problem in class of
compacta $\mc F = F_1 \cup F_2$ where $F_i$ is a continuum containing $e_i$
where $ i =1, 2$. The solution $\widetilde\Gamma$ of this problem will have
the form $\widetilde\Gamma = \Gamma_1 \cup \Gamma_2$ and it will be one of
Stahl's compacta associated set $e$. Corresponding polynomial $\widetilde V
\in \widehat {V}_e$ will have (at least) one double zero replacing the
original $v$-$v$ pair. The other zeros of $V$ may be put in correspondence
with the remaining zeros of $V_e$ (any polynomial $V \in \widehat {V}_e$
different from  $V_e$ have multiple zeros). Then this operation may be
repeated until no $v$-$v$ arcs are left.


It is also possible to describe modification of the system \eqref{Equ} needed to pass to corresponding system of equations defining the roots of $\widetilde V \in \widehat {V}_e;$ we will mention only that number of $v$ parameters is now $p-3$ (one of them is marked as a double root); consequently we will have less by one number of equations.

The fusion of a pair $v$-$v$ into a double zero may be carried out
continuously, using an intermediate critical measures. More exactly,
the finite set of Robin measures  associated with Stahl's compact sets into
a finite dimensional variety of  $e$-critical measures  (method was
presented in \cite{MaRa11}). Zeros of the extended family of
polynomials $V$ play role of coordinates in the family of critical
measures and the Chebotarev continuum $\Gamma_e$ may be viewed as the
origin in this coordinate system.



The system of equations \eqref{Equ} show that locally real numbers $\Re v_i, \  \Im v_i $ are real analytic functions of $\Re a_k, \  \Im a_k $. It is known that dependence $v_i(a_k)$ is globally continuous.

\subsubsection{Green function for the domain $\  \Omega = \overline\CC\setminus \Gamma(f)$  }
Next we review basic facts related to an alternative characterization
of compact sets $\Gamma(f)$ for $f \in \mc A$ in terms of $g$ functions of
certain family of hyperelliptic Riemann surfaces. A $g$ function may be
introduced as a harmonic continuation of the Green function of the
complement to $\Gamma(f)$ (with pole at infinity). In turn, the Green
function $g(z)$ is reduced to the equilibrium potential as follows
\begin{equation} \label{Gre}
g(z) = w - U^\lambda(z), \quad  z \in \Omega = \overline\CC\setminus \Gamma(f)
\end{equation}
where $w$ is the equilibrium constant so that $g = 0$ on $\Gamma$. Function $g$ is harmonic in finite part of $\Omega$ and $g(z) = \log |z| + O(1)$ at infinity. These are characteristic property of Green function of an arbitrary regular domain containing infinity.

For the Green function $g(z)$  of the complement to an extremal compact set $\Gamma(f)$ and associated  complex Green function  $G(z) = g(z) + i \tilde g(z)$ an explicit formulas follow from lemma \ref{lem9}. If $V$ is associated polynomial then we have   $G'(z) = \sqrt{ {V(z)}/{A(z)}}. $ 
Hence,  
\begin{equation} \label{Gre_n}
g(z) = \Re G(z),\qquad  G(z) = \int_a^z \, \sqrt{ {V(t)}/{A(t)}}\, dt \quad (a\in e),
\end{equation}
(branch of the root is such that $g(z) = \log|z| + O(1)$ at infinity).
In particular, it follows from here that the extremal compact set $\Gamma$
has the $S$-property.  In terms of the Green function $g$ it is stated
as follows
\begin{equation} \label{S}
\frac{\partial g}{\partial n_1}(\zeta)
=\frac{\partial g}{\partial n_2}(\zeta)
,\quad \zeta\in\ \Gamma
\end{equation}
Since $g(z) =  w - U^\lambda(z) $ the same may  be written for the equilibrium potential $U^\lambda$ (compare to \eqref{14}).

The $S$-property \eqref{S} plays an important role in Stahl's approach to asymptotics of
complex orthogonal polynomials (Pad\'e denominators).  In terms this property Stahl defined the extremal compact set $\Gamma(f)$. The compact set $\Gamma(f) \in \mc F(f)$ is the unique compact set with the $S$-property such that jump $(f^+ - f^-)(\zeta) \not\equiv 0$ on any analytic arc in $\Gamma(f)$.

Thus, in this case the definition in terms of $S$ property is equivalent to the definition in terms of max-min energy problem. The same is true in many other cases.  A general approach to the existence problem for $S$ curves in given class may based on this equivalency is presented in \cite{Rak12}. We note, however, that $S$-property is more general; it characterize rather arbitrary critical point of energy then (local) maxima of equilibrium energy.

\subsubsection{Riemann surface $\mc R(f)$. }
Now, from the domains $\Omega$ and their Green functions we go to a
Riemann surfaces and their $g$-functions. Characterization of $S$
compact sets in terms of $g$-functions was used by J.~Nuttall \cite{NuSi77}
for a particular case of functions $f$ with quadratic branch points.
The case when branch points are real goes back to N.~I.~Akhiezer
\cite{Akh62}. More general approach was outlined in \cite{Rak16}. Here
we make a few remarks following presentation  in \cite{Rak16}.

The Green function $g$ of the domain $\Omega$ has a harmonic
continuation to the hyperelliptic  Riemann surface ${\mathcal R} =
\mathcal R(f)$  of the function $\sqrt{AV}$. We interpret $\mathcal R$
in a standard way as a (two sheeted) branched covering over $\overline
\CC$ with canonic projection $\pi: \mc R \to \ol{\CC}$.This fact follows
directly from \eqref{Gre}.

At the same time the assertion on extension of $g$ from $\Omega$ to
$\mc R$ may be derived from the $S$-property \eqref{S} combined with
boundary condition $g(z) = 0$ for $z \in \Gamma$. The first step of the
proof is the construction of $\mc R$ based on a standard procedure of
gluing of $\mc R$ from two copies $\Omega_1$ and $\Omega_2$ of $\Omega$
closed by adding the set of accessible boundary points. Formally such a
closure may be defined by introducing the inner metric (the same in
each copy)
\begin{equation}\label{Dist}
dist(z, \ \zeta) = \inf\ \{ \ell (\gamma): \gamma \subset \Omega_k,  \quad z, \ \zeta \in \gamma \}
 \end{equation}
where $k = 1,2$ and $\ell(\gamma)$ is the length of a curve $\gamma$. The closure of $\Omega_k$ in this metric
\begin{equation} \label{Clo}
\overline \Omega_k = \Omega_k \cup \Gamma_k^+ \cup \Gamma_k^-
\end{equation}
contains two identical copies of the topological boundary $\partial \Omega_k = \Gamma_k.$

Next, we introduce the equivalency relation $\sim$ in the topological sum $\Sigma = \coprod_{k=0}^s \overline \Omega_k$ identifying $\Gamma_1^+$ with $\Gamma_2^-$ and $\Gamma_2^+$ with $\Gamma_1^-$. (each interior point is equivalent only to itself). Then we define $\mathcal R = \Sigma / \sim$  as a quotient space with respect to this equivalency. Local coordinates and canonic projection $\pi(\zeta)$ are defined in a standard way. Thus, construction of $\mc R$ is completed.

Next, starting with the Green function $g_\Omega(z)$ of $\Omega$ we define the $g$-function on $\mc R$
\begin{equation} \label{g}
g(\zeta) = g_\Omega(\pi(\zeta)), \quad z \in \Omega_1 , \qquad
g(\zeta) = - g_\Omega(\pi(\zeta)), \quad z \in \Omega_2
\end{equation}
and $g(\zeta) = 0$ for any $\zeta \in \mc R$ with $\pi(\zeta) \in \Gamma$.

Continuity of $g$ on the finite part of $\mc R$ follows by continuity of $g_\Omega(z)$ in $\CC$. In addition to that the $S$ property \eqref{S} combined with definition of $g$ in\eqref{g} show that the gradient of $g$ is continuous in finite points of $\mc \R$ whose projections are not in $e$. It follows that $g$ is harmonic in $\mc R \setminus \pi^{-1}(\infty)$.

Independently of the Green function of the domain $\Omega$, the
$g$-function of an arbitrary hyperelliptic Riemann surface $\mc R$ (not
branched at infinity) is defined as a unique real valued harmonic
function $g(\zeta): \mathcal R\setminus \pi^{-1}(\infty) \to \overline
\R$ on the finite part of $\mathcal R$ with the following behavior at
infinity
\begin{equation} \label{G-1}
\begin{aligned}
& g(\zeta) =  \log|z| + O(1), \quad z \to \infty^{(1)}, \quad z = \pi (\zeta), \\
&g(\zeta) = - \log|z| + O(1), \quad z \to \infty^{(2)},
\end{aligned}
\end{equation}
and normalization $g(z^{(1)}) + g(z^{(2)}) \equiv 0$ (note that $g(z^{(1)}) + g(z^{(2)}) \equiv \const$
according to \eqref{G-1}). Corresponding complex $G$ function is the third kind Abelian integral with
two marked points $\zeta_1 = \infty^{(1)}$ and $\zeta_2 = \infty^{(2)}$ and divisor indicated in \eqref{G-1}. The differential $dG(\zeta)$ is associated third kind Abelian differential.

For the Riemann surface associated with an extremal compact set $\Gamma(f)$
the compact set itself is the projection of zero level $\{\zeta: g(\zeta) =
0 \} \subset \mathcal R$ of $g$-function on the plane $\overline \CC$.
Associated complex function $G(z)$ is multivalued analytic function of
$\mc R$ with real part $g(z).$

Formulas \eqref{Gre} remain valid.


Now we observe that the form of the $g$-function for $\mc R(f)$ in \eqref{Gre} is different from the generic form of $g$-function of a hyperelliptic Riemann surface.
 Indeed, consider the $g$-function associated with a generic hyperelliptic Riemann surface $\mc R$.
Let branch points of $\mc R$ be (distinct) zeros of a polynomial $X(z)
= \prod_{i =1}^{2p}(z - x_i)$ ($\deg X$ is even, so that
$\infty^{(1,2)} \in \mc R$ are not branch point). The $g$-function for
$\mc R$ has representation
\begin{equation} \label{Gre-1}
g(z) = \Re G(z),\qquad  G(z) = \int_{x_1}^z \, \frac {Y(t)}{\sqrt{X(t)}}\, dt \quad
\end{equation}
where polynomial $Y(z) = \prod_{i =1}^{p-1}(z - y_i)$ is uniquely determined by $X(z)$ (at this point we assume that zeros of $X$ are simple).

The  formula \eqref{Gre} defines a mapping $X \to Y$ from $\P_{2p}$ to
$\P_{p-1}$. More exactly this formula defines the mapping $X \to Y$ for
a polynomial $X$ with simple zeros. Then, the mapping may be extended
by continuity to polynomials with multiple zeros. It is easy to verify
that the extension has the following property. If $x_0$ is one of zeros
of $X$ of multiplicity $2m$ or $2m +1$ where $m \in \N$ then $Y(z)$
will have zero of multiplicity $m$ at $x_0.$ Continuity of the mapping
$X \to Y$ in a neighborhood of a polynomial $X \in \P_{2p}$ with
multiple zeros is preserved.

Next, we apply the mapping defined above for $X = AV$ where $A = z^p + \dots$  is a fixed polynomial with simple zeros (which come from $e(f)$) and a variable $V(z) = \prod_{i =1}^{p-2}(z - v_i)$. This defines another mapping
\begin{equation} \label{Phi}
T: V \in\PP_{p-2} \ \to \ Y \in\PP_{p-2}
\end{equation}
Now we characterize $\ \widehat {V} (e)\ $ in terms the mapping $T.$
\begin{thm} \label{FP}
The set $\ \widehat {V} (e)\ $  coincide with the set of fixed points
of the mapping $T$
\end{thm}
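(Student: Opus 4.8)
The plan is to reformulate the fixed-point equation $T(V)=V$ as the statement that the differential $\sqrt{V/A}\,dz$ is precisely the (unique) $g$-function differential $dG$ of the hyperelliptic surface $\mc R(\sqrt{AV})$, and then to match this against the characterization of $\widehat V(e)$ supplied by Lemma \ref{lem9}. By the definition \eqref{Phi} of $T$ (see \eqref{Gre-1}) the map sends $V\in\PP_{p-2}$ to the numerator $Y$ in the representation $G'=Y/\sqrt{AV}$ of the $g$-function of $\mc R(\sqrt{AV})$; since $\sqrt{V/A}=V/\sqrt{AV}$, the identity $T(V)=V$ is equivalent to the assertion that $\sqrt{V/A}\,dz$ carries the correct logarithmic singularities at $\infty^{(1,2)}$ (which it does automatically, as $\sqrt{V/A}\sim\pm 1/z$) together with purely imaginary periods over every cycle of $\mc R(\sqrt{AV})$. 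Thus the whole theorem reduces to the single equivalence: $V\in\widehat V(e)$ if and only if $\sqrt{V/A}\,dz$ is the $g$-function differential of $\mc R(\sqrt{AV})$.

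First I would prove the inclusion $\widehat V(e)\subset\{V:T(V)=V\}$. Let $V=V(f)$ for some $f\in\mc A(\ol\CC\setminus e)$ and let $\Gamma=\Gamma(f)$ be the associated extremal compact set. By Lemma \ref{lem9} and \eqref{Gre_n} the complex Green function of $\Omega=\ol\CC\setminus\Gamma$ satisfies $G'(z)=\sqrt{V/A}$, and by the construction \eqref{g} its real part $g=\Re G$ extends to a single-valued harmonic function on $\mc R(\sqrt{AV})\setminus\pi^{-1}(\infty)$ with the normalization \eqref{G-1}. Hence $g$ is the $g$-function of $\mc R(\sqrt{AV})$, and since the $g$-function, equivalently its differential, is unique, the numerator of $dG$ must equal $V$; that is, $T(V)=V$. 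When $V$ has multiple zeros the surface $\mc R(\sqrt{AV})$ is unbranched at those points, and the matching of numerators is carried out through the un-reduced radical $\sqrt{AV}$ using exactly the multiplicity convention recorded after \eqref{Gre-1}.

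For the converse $\{V:T(V)=V\}\subset\widehat V(e)$, suppose $T(V)=V$. Then $dG=\sqrt{V/A}\,dz$ has purely imaginary periods, so $g=\Re G$ is a single-valued harmonic function on $\mc R(\sqrt{AV})\setminus\pi^{-1}(\infty)$ which descends to the plane and whose zero set $\Gamma=\pi(\{g=0\})$ is, by the explicit form of $G'$, a union of critical trajectories of the quadratic differential $-(V/A)\,(dz)^2$ (compare Lemma \ref{lem9} and \eqref{Gre_n}). The continuity of the gradient of $g$ across $\Gamma$ built into \eqref{g} is precisely the $S$-property \eqref{S}; and once one knows that $g\ge 0$ on $\ol\CC\setminus\Gamma$ (see the next paragraph), the Robin measure of $\Gamma$, given by the normal-derivative measure of $g$, is positive. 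Consequently $\Gamma$ is a compact set with the $S$-property carrying a positive equilibrium measure, its components are critical-trajectory cuts each attached to branch points of $\sqrt{AV}$ (i.e.\ to points of $e$), and its complement is connected; thus $\Gamma\in\mc F(f)$ for the function $f\in\mc A(\ol\CC\setminus e)$ whose branch type is read off from the monodromy of $\sqrt{AV}$ along the arcs of $\Gamma$. By the uniqueness of the $S$-compact set recalled after \eqref{S} we get $\Gamma=\Gamma(f)$, and therefore $V=V(f)\in\widehat V(e)$.

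The step I expect to be the main obstacle is this converse direction, and within it two points. First, one must show that the zero set $\Gamma$ of a genuine $g$-function is a legitimate Stahl compact: that $g\ge0$ throughout $\ol\CC\setminus\Gamma$ (so that the Robin measure is positive), that every connected component of $\Gamma$ contains at least two points of $e$ and does not divide the plane, and that $\Gamma$ indeed makes some $f\in\mc A_e$ single-valued. Second, the degenerate case in which $V$ has multiple zeros must be treated with care: there $\mc R(\sqrt{AV})$ has smaller genus, and the equality $T(V)=V$ forces the numerator of the $g$-function on the reduced surface to vanish at the nodes, an extra condition beyond mere imaginarity of periods; the multiplicity bookkeeping after \eqref{Gre-1} shows that this extra vanishing is exactly what distinguishes the ``fused'' compacts of $\widehat V(e)$ from generic multiple-zero polynomials. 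Both points are controlled by the quadratic-differential description of Lemma \ref{lem9} together with the existence and uniqueness theory for $S$-compacts from \cite{Rak12}.
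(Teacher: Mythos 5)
Your proposal follows essentially the same route as the paper: the forward inclusion by identifying $\Re\int\sqrt{V/A}$ with the $g$-function of the Riemann surface of $\sqrt{AV}$ and invoking its uniqueness (equivalently, comparing \eqref{Gre-1} with the period conditions \eqref{Equ}), and the converse by taking $\Gamma$ to be the projection of the zero level of that $g$-function and recognizing it as a Stahl compact with the $S$-property. The one concrete ingredient the paper supplies that you leave implicit --- and that answers the main worry you flag about which $f\in\mc A(\ol\CC\setminus e)$ actually realizes $\Gamma$ as its extremal compact set --- is the explicit witness $f(z)=z^{m}\prod_i A_i^{-1/d_i}(z)$, where the factors $A_i$ group the points of $e'$ according to the connected components of $\Gamma$.
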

\begin{proof} The fact that for any $V \in  \widehat {V} (e)$ we have $T(V) = V$ follows directly from
comparing \eqref{Gre-1} and \eqref{Equ}.

Backward, for any $V(z) = z^{p-2} + \dots$ with $T(V) = V$ there exist  $ f \in \mc A(\ol {\CC} \setminus e)$ with $V = V(f)$. We may, for instance, construct such $f$ as follows.

We know that $g$ function for the Riemann surface of the function
$\sqrt{AV}$ has the form \eqref{Gre}. Let $\Gamma$ be the projection of
the zero level of $g$ onto the plane. The set $\Gamma$ is a compact
with $S$ property containing $e' \subset e$ (we can have cancelations,
but $e'$ contains at leat two points). $\Gamma$ has finite number of
connected components which defines partition of $e'$ into groups which
belong to the same connected component of $\Gamma$. Partition of $e'$
defines factoring of correspond polynomial  $A$  into a product of
polynomials $A_i(z)$ whose zeros belong to the same connected component
of $\Gamma$.

In turn, using this factoring of $A$ we define a function $ f \in \mc A(\ol {\CC} \setminus e)$ by $f(z) = z^{m} \prod_{i=1}^m A_i^{-1/d_i}(z)$ where $d_i =\deg A_i.$ Stahl's compact set $\Gamma(f)$ of the function $f$ is $\Gamma$.
\end{proof}

In an equivalent form the assertion of the theorem \ref{FP} may be stated as follows.  For a given $A$ and any $V \in  \widehat {V} (e)$ corresponding Riemann surface $\mathcal R$ satisfies the following property. The derivative $G'(z)$  of the (complex)  $G$-function for the Riemann surface $\mathcal R$ of the function $\sqrt{V/A}$ may have poles only in set $A$.
Backward, any such polynomials $V$ belongs to $ \widehat {V} (e)$.


We can make a summary of the above considerations related to the case
$s = 1$ as follows. For a given function $f \in \mc A\ (\ol {\CC}
\setminus e)$ there exist a unique extremal compact set $\Gamma(f)$ whose
Robin measure represent limit zero distribution of Pad\'e polynomials
for $f$.

By lemma \ref{lem9} this compact set is determined by a unique pair of
polynomials $A(z)$ (representing $e(f)$) and $V = V(f) \in  \widehat
{V}(e)$; different $V(f)$ represent different branch types of functions
$f$ with the same branch sets. Finally, Riemann surface $\mathcal R =
\mathcal R(f)$ of the function $\sqrt{V/A}$ and corresponding
$G$-function are also uniquely defined by $f$ (pair $A,V$).

This establishes, one-to-one correspondence between each two of the following three sets associated with a given set $e = \{a_1, \dots, a_p\}$ of distinct points in plane

(1) set of $S$-compact sets $\Gamma(f)$ for all functions $f \in \mc A\ (\ol {\CC} \setminus e)$,

(2) set of polynomials $\widehat {V}(e)$ and

(3) the set of Riemann surfaces
$
\widehat{\mathcal R}(e) = \left\{\mathcal R(f) : \,\, f \in \mc A(\ol {\CC} \setminus e)\right\}.
$

In the last section of the paper we present a conjecture which is
generalizing to some extent these fact to   the vector case $s >1$. It
turns out that in this case it is more convenient to analyze situation
in terms of set of Riemann surfaces generalizing  set
$\widehat{\mathcal R}(e).$ For the Angelesco case the conjecture is
supported by the main result of this paper.

\subsubsection{Weighted case $s =1$. Extremal compact set in an external field.}
As we have mentioned above, each component of the extremal vector
compact set $\vec \Gamma(\vec f)$ is itself a scalar extremal compact set in
the external field $\varphi$ induced by the equilibrium measures of all
other components. Thus, we can pass to vector case using a weighted
version of the scalar problem.

Generally speaking, presence of the external field may create a new and
significantly more complicated situation. However, in the study of
vector Angelesco case we meet only such weighted situation when the
effect of the presence of the external field is rather mild. In
essence, all what was said above with regard to nonweighted case
remains valid, may be in a somewhat modified form. We discuss briefly
what needs to be changed.

Settings of the weighted problem are the following. Together with
functions  $f \in \mc A (\ol \CC \setminus e)$ where $e = \{a_1, \dots,
a_p\}$ we consider a simpliconnected domain $\mc D$ containing $e$ and
a harmonic function $\varphi(z)$ in $\mc D$.

For any (unit) measure in $\mc D$ we define weighted energy $\mc E_{\varphi}(\mu)$ according to  \eqref{M15}. For any compact set $F \in \mc F(f)$ which is contained in $\mc D$ we define the (unit, weighted) equilibrium measure $\lambda_F$ by \eqref{M16}.

Now, our primary assumption is that for any  $f \in \mc A (\ol \CC \setminus e)$ there exist a compact set $\Gamma = \Gamma(f, \varphi) \in  \mc F(f)$ with
\begin{equation} \label{E}
\mc{E_\varphi}[{\Gamma}]=\max_{{F(f)}\in {\mc{F}}}\mc{E_\varphi}[{F}],
\qquad \text{where} \quad \mc{E_\varphi}[{F}] = \mc{E_\varphi}(\lambda_F)
\end{equation}
We will be eventually interested in case when $\Gamma$ is one of components of Angelesco vector compact set $\vec \Gamma(\vec f)$ and the field $\varphi(z)$ is the potential of other components of the vector equilibrium measure. In this case our assumption on existence of extremal compact set in \eqref{E} will become a corollary of Angelesco condition. In general weighted settings such compact set may not exist and may not be unique, if exists; see \cite{Rak12} for details.
\begin{lemma} \label{lem10}
Under the assumptions above on the extremal compact set $\ \Gamma = \Gamma(f, \varphi) \in  \mc F(f)$ the equilibrium measure $\lambda = \lambda_\Gamma$ satisfies the condition
\begin{equation}
B(z) : = \ A(z) \left( \int \frac{d\lambda(x)}{x -z} + \Phi'(z) \right)^2 \ \in H(\mc D)
\label{B}
\end{equation}
where $\Phi(z) = \varphi(z) + i\tilde \varphi (z) \in H(\mc D) $ is the analytic function in $\mc D$ with real part $\varphi(z)$ 
Moreover,  the extremal compact set $\Gamma(f)$ is a union of some of critical trajectories of the quadratic differential $ - (B/A)\,(dz)^2.$
\end{lemma}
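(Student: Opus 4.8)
The plan is to reduce the statement to the symmetry ($S$-)property of $\Gamma$ by recognizing the bracket in \eqref{B} as twice the complex derivative of the total potential, and then to propagate analyticity of its square across the arcs of $\Gamma^0$. Write $P(z)=(U^\lambda+\varphi)(z)$ for the total potential. Using the Wirtinger derivative $\partial_z=\tfrac12(\partial_x-i\partial_y)$, and recalling that for a real harmonic function $u=\Re F$ with $F$ analytic one has $2\,\partial_z u=F'$, I would first record
$$
2\,\partial_z P(z)\;=\;\int\frac{d\lambda(x)}{x-z}+\Phi'(z),
$$
so that the bracket in \eqref{B} is exactly $2\,\partial_z P$ and $B=A\,(2\,\partial_z P)^2$. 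Setting $h:=(2\,\partial_z P)^2$, the function $h$ is holomorphic in $\mc D\setminus\supp\lambda$, since the Cauchy transform is holomorphic off $\supp\lambda$ and $\Phi'\in H(\mc D)$; thus $B=A\,h$ and $B/A=h$ there.

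The core step is the holomorphic continuation of $h$ across the regular part $\Gamma^0$. On each analytic arc $\gamma\subset\Gamma^0$ the equilibrium condition \eqref{M17} gives $P\equiv w$, so the tangential derivative of $P$ vanishes from both sides; in local coordinates in which $\gamma$ lies on $\R$ this reads $2\,\partial_z P^{\pm}=-i\,\partial_y P^{\pm}$ on $\gamma$. The $S$-property \eqref{3-2n} says the two one-sided normal derivatives agree up to sign, i.e. $\partial_y P^{+}=-\,\partial_y P^{-}$. Combining, $(2\,\partial_z P^{+})^2=-(\partial_y P^{+})^2=(2\,\partial_z P^{-})^2$, that is $h^{+}=h^{-}$ on $\gamma$, and a standard Morera argument continues $h$ holomorphically across $\Gamma^0$. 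It then remains to remove the finite exceptional set: at each branch point $a_i\in e$ the density of $\lambda$ has at most a square-root singularity, so $h$ has at most a simple pole there and the simple zero of $A$ cancels it, leaving $A\,h$ bounded and the singularity removable; at the interior ramification points (the arc endpoints lying in the zero set of $B$, not in $e$) the function $h$ is bounded while $A$ does not vanish; finally the remaining points of the capacity-zero exceptional set carry no mass and lie where $\Phi$ is holomorphic, so $A\,h$ is bounded there as well. Hence $B=A\,h\in H(\mc D)$, which is the first assertion.

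For the second assertion I would pass to the complex potential $\mc P$ with $\Re\mc P=P$, so that $\mc P'=2\,\partial_z P=\sqrt{B/A}$ for a suitable branch. Along any arc $\gamma\subset\Gamma^0$ the identity $P\equiv w$ gives $\Re(\mc P'\,dz)=0$, i.e. $\mc P'\,dz=\sqrt{B/A}\,dz$ is purely imaginary; squaring, $(B/A)(dz)^2<0$ along $\gamma$, which is precisely the condition that $\gamma$ be a trajectory of the quadratic differential $-(B/A)(dz)^2$. Since the endpoints of these arcs are zeros of $B$ or the (simple) poles arising from the zeros of $A$, the arcs are critical trajectories, and $\Gamma$ is a union of such, as claimed.

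The main obstacle is the analysis of the finite exceptional set rather than the reflection itself: one must guarantee that the edge behavior of $\lambda$ at the branch points $a_i$ is of square-root type, so that $A\,h$ stays bounded and the pole is genuinely removable, and that the capacity-zero exceptional points produce only removable singularities. Both facts rest on the regularity of the equilibrium measure furnished by the $S$-property together with the removable-singularity theorems for polar sets, for which I would invoke the scalar weighted theory developed in \cite{Rak12}.
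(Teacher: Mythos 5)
Your argument is essentially correct, but it is not the route the paper takes: the paper gives no direct proof of Lemma \ref{lem10} at all, instead reducing it to two citations --- the representation \eqref{B} is proved in \cite{MaRa10} for $(A,\varphi)$-critical measures by a \emph{variational} argument (differentiating the weighted energy under local deformations $z\mapsto z+th(z)$ and reading off from the resulting identity that $A\,(C^{\lambda}+\Phi')^{2}$ has no singular part), while \cite{Rak12} supplies the fact that the equilibrium measure of the energy-maximizing compact set \eqref{E} is critical. That route obtains analyticity of $B$ \emph{first} and then derives the $S$-property and the trajectory structure of $\supp\lambda$ as corollaries; you run the implication in the opposite direction, taking the $S$-property \eqref{3-2n} and the equilibrium condition \eqref{M17} as input and recovering analyticity of $(2\partial_z P)^2$ by boundary matching plus Morera. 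Given that the paper does establish the $S$-property independently (via Theorem 3.4 of \cite{Rak12}), your derivation is legitimate, and it has the virtue of being local and elementary, making transparent why it is the \emph{square} of the Cauchy transform that is single-valued. What you lose relative to the variational argument is twofold. First, your Morera step presupposes that $\supp\lambda$ is, off a small set, a union of analytic arcs carrying the two-sided boundary conditions; in \cite{MaRa10}--\cite{Rak12} this analyticity of the support is itself a \emph{consequence} of the representation \eqref{B}, so much of the content of the lemma is front-loaded into your hypothesis. Second, the behavior at the exceptional points, which you correctly identify as the main obstacle, is handled automatically by the variational identity, whereas in your approach it needs a separate argument; a cleaner self-contained fix than appealing to square-root edge behavior is to note that $\lambda$ has finite energy, so $C^{\lambda}\in L^{2}_{\mathrm{loc}}$ and hence $h=(C^{\lambda}+\Phi')^{2}\in L^{1}_{\mathrm{loc}}$, and a function holomorphic in a punctured disc with an $L^{1}$ bound has at most a simple pole there, which the simple zero of $A$ at each $a_i\in e$ then removes. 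Your treatment of the second assertion (the arcs are trajectories because $\mathcal P'\,dz$ is purely imaginary along them, with endpoints among the critical points of $-(B/A)(dz)^2$) agrees with the standard argument and is fine.
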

The a proof see \cite{MaRa10} where representation of the lemma is
obtained for critical (so called $A, \varphi $-critical) measures. The
fact that the equilibrium measure of extremal compact set  $\Gamma(f,
\varphi)$ is critical is explained in \cite{Rak12}.




The formal investigation of the combinatorial structure of $\Gamma(f,
\varphi)$ under general assumptions on $\varphi$ is not the purpose of
the paper. We are interested in the case related to Angelesco vector
equilibrium problem. Informally speaking, the Angelesco condition
implies that the external field is harmonic in a neighborhood of
$\Gamma$ and that it is small enough.

Under these conditions the combinatorial structure of $\Gamma(f,
\varphi)$ remains similar to structure $\Gamma(f) = \Gamma(f, 0)$ and,
in particular, there is exactly $p-2$  zeros of $B$ involved in this
structure. To make it more certain let us assume that $f$ has a common
branch type, that is, $\Gamma(f, \varphi)$ is a continuum. Then there
is such a neighborhood of this compact set where $B$ has exactly $p-2$
zeros and each of them is an endpoint of one of the analytic arcs
constituting $\Gamma(f, \varphi).$  Number of arcs is at most $2p-3$
and is equal to $2p-3$ if all those zeros of $B$ are simple.

For a formal proof of the last proposition it is more convenient to use
Riemann surfaces and their $G$ functions (in particular, quadratic
differential $(dG)^2$ is the proper analogue of $(V/A)dz^2$). Here we
mention briefly a homotopic argument which may also be used for formal
proofs. Assume, again, that $\Gamma(f, \varphi)$ is a continuum. We
introduce a parameter $t \geq 0$ and consider the family of vector
fields $t\varphi(z)$ and corresponding compact sets $\Gamma_t = \Gamma(f,
t\varphi).$ Let $B_t(z)$ be corresponding $B$-function in \eqref{B}.
For $t=0$ we are back in nonweighted case and we know that $B_0(z) =
V(z) \in \P_{p-2}$. Thus, $B_0(z)$ has exactly $p-2$ zeros in a
simpliconnected neighborhood $\mc D$ of $\Gamma$. If for any $\tau \in
[0, t]$ the compact set $\Gamma_\tau$ belongs to the domain of harmonicity
of the external field, then zeros of $B$ are still defined by system of
equations \eqref{Equ} with $V$ replaced by $B.$ It follows from these
equations that short trajectories of the quadratic differential  $ -
(B/A)\,(dz)^2$ constituting $\Gamma_\tau$ are preserved and change
continuously as functions of $\tau \in [0, t].$

Thus, $\Gamma_\tau$ and, in particular, zeros of  $B_\tau(z)$ continuously depend on $\tau$ (actually this dependence is real analytic). Note that the combinatorial structure of $\Gamma_\tau$ may change (bifurcation points are values of $\tau$ where $B$ has multiple zeros) but number of parameters involved in the structure remains constant.

Using the arguments above we can calculate number of parameters
involved in the structure of the vector compact set $\vec \Gamma (\vec f)$.
For instance if $s = 2$

\subsection{Riemann surface $ \mc R(\vec f)$.}
The asymptotics of Angelesco Hermite Pad\'e polynomials associated with
the vector function $\vec f \in \mc A$ in \eqref{1} may be described in
terms of $g$-function of an algebraic Riemann surface $\mathcal R = \mc
R(\vec f)$. First, we introduce $g$-function associated with a generic
Riemann surface defined as a branched covering over the sphere.

\subsubsection{$G$-function of a Riemann surface.}
Let $\mathcal R$ be an arbitrary algebraic Riemann surface defined as branched $s+1$-sheeted covering over $\overline { \CC}$ with canonic projection $ \pi: \ \mathcal R \to \overline \CC .$ Assume that
elements of the set $\pi^{-1} (\infty) = \{ \infty^{(i)}, i = 0,1, \dots, s\}$ are distinct. We fix one of them and denote $ \infty^{(0)}$.

Then there exist a unique function $g(\zeta): \mathcal R \to \overline \R$ with the following properties. The function $g$ is harmonic on the finite part of $\mathcal R$, as $\ z = \pi (\zeta) \to \infty$ we have
\begin{equation} \label{G-2}
\begin{aligned}
& g(\zeta) = -s \log|z| + O(1), \quad \zeta \to \infty^{(0)},  \\
&g(\zeta) = \log|z| + O(1), \quad \zeta \to \infty^{(i)} \quad i = 1, \dots, s 
\end{aligned}
\end{equation}
and, finally,  $g$ is normalized by $\sum g(z^{(i)})=0$.

We call $g$ the (real) $g$-function of $\mathcal R$ (with one marked point $\infty^{(0)}$).

Together with the real $g$ function we define a complex one $G = g + i \tilde g$ so that we have $g = \Re G.$ Function $G(z)$ is multivalued analytic function on $\mc R$; equivalently $G$ is a third kind Abelian integral with poles at infinities and divisor indicated in \eqref{G-2} above.

In many instances it is convenient to identify the coordinate $\zeta \in \mc R$ with its projection $z = \pi(\zeta)$. We will use such identification when it can not lead to ambiguities. Notation $z^{(k)}$ (for some elements of $\pi^{-1}(z)$ may be used if numeration sheets is defined or irrelevant.

The derivative $G'(z)$ of the complex $G$-function is meromorphic
(rational) function on $\mc R$. In other words $G'$ is an algebraic
function. Recall that in case $s = 1$ we had $G' = \sqrt{V/A}$, so that
$w = G'$ is solution of quadratic equation $ Aw^2 - V = 0$ where $A$ is
polynomial with roots at branch points and $V$ is another polynomial
which may be determined.

Not much was known so far for the case $s >1$. There are a few isolated
results showing the situation is much more complicated. Next we discuss
the Angelesco case which is in many ways simpler then the general one.



\subsubsection{Riemann surface: existence theorem.}
In the Angelesco case Riemann surface $\mathcal R$  associated with
extremal vector - compact set $\vec\Gamma =\vec \Gamma (\vec f)$ may be
constructed for a given vector compact set $\vec\Gamma$ with $S$-property
using a procedure quite similar to the construction of hyperelliptic
Riemann surface $\mc R(f)$ described in section 4.2.5 above (In place
of the Green functions we have to use equilibrium potentials).  We
state the final result of this procedure as a theorem.

\begin{thm}\label{RS}
Let $\vec\Gamma =\vec \Gamma (\vec f)$ be the extremal vector compact set associated with vector function $\vec f \in \mc A$ and the Angelesco condition is satisfied for $\vec f$. Let $\lambda = \lambda_1 + \dots +\lambda_s$ where  $\vec\lambda_{\vec \Gamma} = (\lambda_1, \dots, \lambda_s)$ and
$\Gamma = \Gamma^1_1\cup\dots\cup\Gamma^1_s$.

Then the total equilibrium potential $U^\lambda(z)$ has a harmonic
continuation from the domain $\Omega = \ol {\CC} \setminus \Gamma$ to a
Riemann surface $\mathcal R$ which is a $s+1$ sheeted branched over the
sphere with canonic projection $\pi: \mc  \R \to \ol  \CC$. Moreover,
with
$$
\pi^{-1}(\Omega)=\Omega_0\cup\Omega_1\cup\dots\cup\Omega_s
$$
we have
\begin{equation}\label{G-3}
U^\lambda(\pi(\zeta)) = g(\zeta) + C, \quad \zeta \in \Omega_0;
\quad  w_k - U^{\lambda_k}(\pi(\zeta)) =  g(\zeta) + C, \quad \zeta \in \Omega_k
 \end{equation}
 where $k = 1,2 \dots, s$ and numeration of (disjoint) domains
$\Omega_k \subset \mc R$ is such that  projection of $\partial \Omega_k \subset \partial \Omega_0$ is $\Gamma^1_k = \supp \lambda_k$ ( $\Omega_0 $ and $\Omega_k $ are connected through $\Gamma^1_k$).
 \end{thm}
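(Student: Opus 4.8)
The plan is to build $\mc R$ by gluing $s+1$ copies of $\Omega$ and then to exhibit the two right-hand sides of \eqref{G-3} as the boundary values of one and the same harmonic function on the glued surface; uniqueness of the $g$-function of $\mc R$ then forces this function to be $g$, which is exactly the assertion. First I would construct the surface, imitating the two-sheeted construction of $\mc R(f)$ in \eqref{Dist}--\eqref{g}. Take $s+1$ copies $\Omega_0,\Omega_1,\dots,\Omega_s$ of $\Omega=\ol\CC\setminus\Gamma$, pass to their inner-metric closures so that each boundary arc appears with two sides, and identify (crosswise) the two sides of $\Gamma^1_k$ on the copy $\Omega_0$ with the two sides of $\Gamma^1_k$ on the copy $\Omega_k$, leaving the lower sheets $\Omega_1,\dots,\Omega_s$ mutually unglued. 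Here the Angelesco condition \eqref{9} enters: since the supports $\Gamma^1_k=\supp\lambda_k$ are pairwise disjoint, the gluing data on different components do not interfere, the quotient $\mc R$ is a well-defined $(s+1)$-sheeted branched cover with canonic projection $\pi$, and it is connected because every $\Omega_k$ meets $\Omega_0$ through $\Gamma^1_k$. Its branch points project to the endpoints of the analytic arcs of the $\Gamma^0_k$ and to the exceptional set, exactly as in the scalar case, since near such a point only the two sheets $\Omega_0,\Omega_k$ come together.

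Next I would define $g$ on $\mc R$ by the formulas \eqref{G-3} with $C=\frac{1}{s+1}\sum_{k=1}^s w_k$ and verify that it is a single harmonic function. Continuity across $\Gamma^1_k$ is immediate from the equilibrium condition \eqref{13}: on $\supp\lambda_k$ one has $W_k=U^{\lambda_k+\lambda}=w_k$, i.e. $U^\lambda=w_k-U^{\lambda_k}$, which is precisely the equality of the two expressions in \eqref{G-3} on the gluing arc. The essential point is harmonicity across the regular arcs $\Gamma^0_k$, and here the Angelesco condition is used a second time: since the other supports are disjoint from $\Gamma_k$, the potential $U^{\lambda-\lambda_k}=\sum_{j\ne k}U^{\lambda_j}$ is harmonic in a full neighborhood of $\Gamma_k$, so its opposite normal derivatives on $\Gamma^0_k$ are negatives of one another. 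Writing $W_k=2U^{\lambda_k}+U^{\lambda-\lambda_k}$, the $S$-property \eqref{14} therefore collapses to $\partial U^{\lambda_k}/\partial n_1=\partial U^{\lambda_k}/\partial n_2$ on $\Gamma^0_k$. A direct computation of the normal derivatives of the two branches $U^\lambda-C$ (on $\Omega_0$) and $w_k-U^{\lambda_k}-C$ (on $\Omega_k$) across the glued arc shows that their gradient-matching condition is exactly this reduced identity, so $g$ extends harmonically across each $\Gamma^0_k$.

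It then remains to dispose of the branch and exceptional points and to identify $g$. Near the finitely many endpoints and branch points, and the capacity-zero exceptional set, $g$ is bounded, continuous, and harmonic off a set of zero capacity, so it extends harmonically by a standard removable-singularity argument. Finally I would read off the behavior over infinity: since $|\lambda|=s$ and $|\lambda_k|=1$, the formulas \eqref{G-3} give $g(\zeta)=-s\log|z|+O(1)$ at $\infty^{(0)}$ and $g(\zeta)=\log|z|+O(1)$ at $\infty^{(k)}$, matching \eqref{G-2}; and summing \eqref{G-3} over the $s+1$ sheets, using $\sum_k U^{\lambda_k}=U^\lambda$, gives $\sum_i g(z^{(i)})=\sum_k w_k-(s+1)C=0$ by the choice of $C$. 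Thus the constructed function satisfies the characterizing properties of the $g$-function of $\mc R$ in \eqref{G-2}, so by uniqueness it coincides with $g$, which proves \eqref{G-3}.

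The main obstacle is the harmonic-gluing step across $\Gamma^0_k$: everything hinges on the $S$-property \eqref{14} reducing, under the Angelesco condition, to the scalar symmetry $\partial U^{\lambda_k}/\partial n_1=\partial U^{\lambda_k}/\partial n_2$ that the two-sheeted gluing of $\Omega_0$ and $\Omega_k$ requires. The surface construction and the bookkeeping at infinity and for the normalization are routine once this identity is in place.
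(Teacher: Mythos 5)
Your proof follows the same route as the paper's: glue $s+1$ sheets of the plane along the supports $\Gamma^1_k$, get continuity of the glued function from the equilibrium identity $W_k=w_k$ on $\supp\lambda_k$, get the $C^1$ matching from the $S$-property, remove the exceptional set, and identify the result with the $g$-function by its behaviour over infinity (your normalization $C=\frac{1}{s+1}\sum_k w_k$ correctly pins down the constant the paper leaves unspecified). But the step you yourself single out as the crux is wrong as stated. Writing $W_k=2U^{\lambda_k}+h$ with $h=\sum_{j\ne k}U^{\lambda_j}$ harmonic near $\Gamma_k$, the relation $\partial h/\partial n_1=-\partial h/\partial n_2$ does not make $h$ drop out of $\partial W_k/\partial n_1=\partial W_k/\partial n_2$; it makes $h$ contribute with opposite signs on the two sides, so the $S$-property reduces to
$$
\frac{\partial U^{\lambda_k}}{\partial n_1}-\frac{\partial U^{\lambda_k}}{\partial n_2}=-\frac{\partial h}{\partial n_1}\quad\text{on }\Gamma^0_k,
$$
and not to the unweighted symmetry $\partial U^{\lambda_k}/\partial n_1=\partial U^{\lambda_k}/\partial n_2$. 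The latter holds in the real (Markov) Angelesco case by reflection symmetry, but for genuinely complex arcs it would say that $\Gamma_k$ is an $S$-curve in the zero field, whereas it is an $S$-curve in the field $\varphi_k=h/2$. The error is self-correcting if you do the computation you announce instead of the "collapse": the crosswise matching of $U^\lambda-C$ from $\Omega_0$ with $w_k-U^{\lambda_k}-C$ from $\Omega_k$ reads $\partial U^\lambda/\partial n_1=\partial U^{\lambda_k}/\partial n_2$ (plus its mirror image), and substituting $U^\lambda=U^{\lambda_k}+h$ turns this into exactly the displayed identity, i.e. exactly \eqref{14} for $W_k$. That direct matching against the $S$-property is what the paper's proof does; you should delete the intermediate "reduced identity".

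A smaller defect is in the construction of $\mc R$ itself. If all $s+1$ sheets are copies of $\Omega=\ol\CC\setminus\Gamma$ and on sheet $k$ you glue only the two sides of $\Gamma^1_k$, then the two sides of $\Gamma^1_j$ for $j\ne k$ on sheet $k$ remain free, and the quotient is a surface with boundary rather than an $(s+1)$-sheeted branched cover of the sphere. The paper avoids this by taking the $k$-th sheet to be $\CC\setminus\Gamma^1_k$ for $k\ge1$ (only $\Omega_0$ is cut along all of $\Gamma$); equivalently, you must heal the extra slits on sheet $k$ by the identity identification, which is legitimate precisely because, under the Angelesco condition, $w_k-U^{\lambda_k}$ is harmonic across $\Gamma^1_j$ for $j\ne k$. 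With these two repairs your argument coincides with the paper's proof.
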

\begin{proof}
The construction of  $\mc R$ is based on the equilibrium conditions and
the $S$-properties of the extremal compact set (see \eqref{10} and
\eqref{11} and uses the same standard procedure of gluing of $\mc R$
from $s+1$ (closures of) plane domains $ \Omega_k = \CC \setminus
\Gamma_k^1,$ where $k= 1,2, \dots, s$ and the domain $\Omega_0 = \CC
\setminus \cup \Gamma_k^1.$  which has been used in sec 4.2.5 for the
case $s = 1.$

The closure $\overline \Omega_k$ is defined on the basis of the inner metric in $\Omega_k$ same way as in \eqref{Dist} and \eqref{Clo} where in place of $\Gamma_k$  we use $\Gamma_k^1$ for positive $k$ and $\Gamma$ for $k=0$.

Then, again like in case $s=1$, we introduce the equivalency relation $\sim$ in the topological sum $\Sigma = \coprod_{k=0}^s \overline \Omega_k$ joining in a standard way the copy of $(\Gamma_k^1)^+$ on the sheet zero with the copy of $(\Gamma_k^1)^-$ on the $k$-the sheet and vise versa. Finally, we define $\mathcal R = \Sigma / \sim$  as a quotient space with respect to this equivalency. Local coordinates and the structure of a branched covering over $\CC$ are introduced on $\mc R$ in a standard way.

To prove \eqref{G-3} we use equilibrium conditions \eqref{13} and $S$-property  \eqref{14} for $\vec \Gamma$
\begin{equation*} \label{Asy-1}
 W_k(z): =
U^{\lambda_k + \lambda}(z) = w_k,  \quad
 \frac {\partial W_k(z)}{\partial n_1} = \frac {\partial W_k(z)}{\partial n_2}, \quad
 z \in \Gamma_k^1, \quad k = 1, 2, \dots, s
\end{equation*}
(second equality holds in interior points of arcs in $ \Gamma_k^1$).
Equivalently, we have
\begin{equation*} \label{}
U^\lambda(z) = w_k - U^{\lambda_k}(z),  \quad
 \frac {\partial}{\partial n_{1, 2}} U^\lambda(z)  =
 \frac {\partial}{\partial n_{2, 1}} \left(w_k - U^{\lambda_k}(z) \right)
 z \in \Gamma_k^1, \quad k = 1, 2, \dots, s
\end{equation*}
for $z \in (\Gamma_k^1)^0, \quad k = 1, 2, \dots, s.$ It follows, first, that the function $U(z)$  defined by
\begin{equation*} \label{}
U(z) = U^\lambda(z), \quad z \in \Omega_0,  \qquad
U(z) = w_k - U^{\lambda_k}(z),  \quad z \in \Omega_k,
\end{equation*}
has continuous extension to the whole Riemann surface $\mc R$. Moreover
$U(z)$ is continuously differentiable on the open analytic arcs in $
\Gamma_k^1$. Hence, $U$ is harmonic in the finite part of $\mc R$.
Finally asymptotics of $U$ at infinities is same as for $g$ in
\eqref{G-2}. Therefore $g(z) = U(z) + C$ on $\mc \R$.
\end{proof}

In terms of $G$-function the main theorem \ref{thm1} of the paper may be stated as follows
\begin{thm} \label{thmAsy-1}
Under the assumptions of the theorem \ref{thm1} for $\ k= 1, \dots , s$  we have convergence as $n \to \infty$
\begin{equation} \label{C}
C_{n,k}(z) =  \frac1n  \frac {Q'_{n,k}(z)}{Q_{n,k}(z)} = \int \frac {d \lambda_{n,k}(t)}{z-t}\  \to \
C^{\lambda_k} (z) =  \int \frac {d \lambda_{k}(t)}{z-t} = G'(z),  \quad z \in \Omega_k
\end{equation}
in plane Lebesgue measure $m_2$ and in capacity on compact sets in $\Omega_k = \ol\CC \setminus\Gamma_k^1$.
\end{thm}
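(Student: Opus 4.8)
The plan is to obtain the statement from the weak-$*$ convergence of zero counting measures already established in Theorem~\ref{thm1}, together with the geometric identification coming from Theorem~\ref{RS}. Write $\lambda_{n,k}=\frac1n\mc X(Q_{n,k})$ for the zero counting measure of $Q_{n,k}$; this is the measure denoted $\mu_{n,k}$ in \eqref{Meas}, so by \eqref{10} we have $\lambda_{n,k}\overset{*}{\to}\lambda_k$. Since $C_{n,k}=\frac1n\,Q_{n,k}'/Q_{n,k}=C^{\lambda_{n,k}}$ is a logarithmic derivative it is insensitive to the (spherical) normalization of $Q_{n,k}$, so the whole assertion reduces to showing that weak-$*$ convergence $\lambda_{n,k}\to\lambda_k$ forces convergence of the Cauchy transforms $C^{\lambda_{n,k}}\to C^{\lambda_k}$ in $m_2$-measure on $\CC$ and in capacity on compact subsets of $\Omega_k$. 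The identification $C^{\lambda_k}=G'$ is then read off from \eqref{G-3}: on the connected domain $\Omega_k$ we have $g=\Re G$ and $g+C=w_k-U^{\lambda_k}$; writing $\Psi(z)=-\int\log(z-t)\,d\lambda_k(t)$ for the complex logarithmic potential of $\lambda_k$ (so that $\Re\Psi=U^{\lambda_k}$ and $\Psi'=-C^{\lambda_k}$), we get $G=w_k-\Psi+\const$ and hence $G'=-\Psi'=C^{\lambda_k}$ throughout $\Omega_k$. Thus it suffices to prove convergence toward $C^{\lambda_k}$.

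For the local statement I would fix a compact $K\subset\Omega_k$ and choose a continuous cut-off $\chi$ with $\chi\equiv1$ on a neighborhood of $\Gamma_k^1$ and $\chi\equiv0$ off a slightly larger neighborhood $V$ of $\Gamma_k^1$ chosen so that $\ol V\cap K=\varnothing$. Then
\[
C^{\lambda_{n,k}}(z)=\int\chi(t)\frac{d\lambda_{n,k}(t)}{z-t}+\int\bigl(1-\chi(t)\bigr)\frac{d\lambda_{n,k}(t)}{z-t}=:A_n(z)+C^{\rho_n}(z),\qquad \rho_n=(1-\chi)\lambda_{n,k}\ge0 .
\]
For $z\in K$ the kernel $\chi(t)/(z-t)$ is a fixed bounded continuous function of $t$, uniformly in $z\in K$, so weak-$*$ convergence gives $A_n(z)\to\int\chi(t)(z-t)^{-1}d\lambda_k(t)$ uniformly on $K$; since $\chi\equiv1$ on $\supp\lambda_k=\Gamma_k^1$ this limit equals $C^{\lambda_k}(z)$ exactly. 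The remainder measure has mass $|\rho_n|=\int(1-\chi)\,d\lambda_{n,k}\to\int(1-\chi)\,d\lambda_k=0$, so everything is reduced to showing that the Cauchy transform of a positive measure of vanishing mass is small, both in $m_2$-measure and in capacity.

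This smallness of $C^{\rho_n}$ is the technical core and the step I expect to be the main obstacle. The $m_2$-part is soft: from $\int_{D(z_0,R)}|z-t|^{-1}\,dm_2(z)\le 4\pi R$ uniformly in $t$ one gets $\int_{D(z_0,R)}|C^{\rho_n}|\,dm_2\le C_R|\rho_n|\to0$, whence $C^{\rho_n}\to0$ in $m_2$-measure; combined with $A_n\to C^{\lambda_k}$ (and with the fact that $\Gamma_k^1$, a finite union of analytic arcs, is $m_2$-null) this already yields the global $m_2$ statement. The capacity estimate is more delicate and requires a Cartan-type covering argument: since $|C^{\rho_n}(z)|\le\int|z-t|^{-1}d\rho_n(t)$, a Vitali covering applied to the maximal function $z\mapsto\sup_{r>0}\rho_n(D(z,r))/r$ shows that for each $\delta>0$ the set $\{z:|C^{\rho_n}(z)|>\delta\}$ is covered by discs of total radius $O(|\rho_n|/\delta)$, hence of logarithmic capacity $O(|\rho_n|/\delta)\to0$. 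Consequently $C^{\rho_n}\to0$ in capacity on $K$, and together with the uniform convergence $A_n\to C^{\lambda_k}$ this gives $C_{n,k}\to C^{\lambda_k}$ in capacity on every compact $K\subset\Omega_k$. The only bookkeeping that needs care is keeping the remainder supports essentially bounded (so that the maximal-function covering applies) while controlling the far part uniformly; once the small-mass Cauchy-transform lemma is in place, the remainder is the standard weak-$*$ convergence argument used repeatedly in Section~3.
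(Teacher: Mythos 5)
Your overall route coincides with the paper's: Theorem \ref{thmAsy-1} is presented there as a reformulation of Theorem \ref{thm1}, with the identification $C^{\lambda_k}=G'$ on $\Omega_k$ read off from \eqref{G-3}, and no separate argument is given for passing from weak-$*$ convergence of the counting measures to convergence of their Cauchy transforms. Your derivation of $G'=C^{\lambda_k}$ from \eqref{G-3} is correct, and so is the $m_2$-part (cut-off decomposition, uniform convergence of the near part by weak-$*$ convergence against a continuous kernel, and the Fubini bound $\int_{D}|z-t|^{-1}\,dm_2(z)\le 4\pi R$ for the small-mass remainder).

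The capacity step, however, contains a genuine gap, and it is not a repairable technicality. The covering lemma you invoke --- that $\{z:|C^{\rho_n}(z)|>\delta\}$ is covered by discs of total radius $O(|\rho_n|/\delta)$ because the maximal function $\sup_{r}\rho_n(D(z,r))/r$ controls the transform --- is false: a bound $\rho(D(z,r))\le Ar$ for all $r$ does not control $\int|z-t|^{-1}d\rho(t)$ (the dyadic decomposition gives $\sum_k 2^{k+1}\rho(D(z,2^{-k}))\le \sum_k 2A$, a divergent sum), and for $\rho$ equal to a small multiple $m$ of arclength on a fixed segment the potential $\int|z-t|^{-1}d\rho(t)$ is $+\infty$ at every interior point of the segment, so its superlevel set has capacity bounded below no matter how small $m$ is. This is in sharp contrast with the logarithmic kernel, for which $\cop\{U^{\rho}>\delta\}=O(e^{-\delta/|\rho|})$; that contrast is exactly why $n$-th root asymptotics converge in capacity while logarithmic derivatives need not. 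Moreover, no sharper covering argument can close the gap, because the desired conclusion is not a consequence of $|\rho_n|\to 0$ alone: take $\rho_n=\frac1n\sum_{j=1}^{N}\delta_{j/N}$ with $N=[\sqrt n\,]$ atoms on a fixed segment $I\subset K\subset\Omega_k$. Then $|\rho_n|\to 0$, but near each atom the term $\frac1{n(z-j/N)}$ dominates the rest of the sum (which is $O(N\log N/n)=o(1)$), so $\{|C^{\rho_n}|>\delta\}$ contains $N$ discs of radius comparable to $1/(n\delta)$ spread along $I$; the energy of the equidistributed unit measure on this union is $\frac{\log(n\delta)}{N}+O(1)=O(1)$, hence its capacity stays bounded away from zero. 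Since Theorem \ref{thm1} controls only the proportion, not the location, of the $o(n)$ spurious zeros of $Q_{n,k}$ in $K$, it does not exclude such configurations; the capacity assertion therefore needs extra input on those zeros (e.g.\ that no more than $O(\log n)$ of them accumulate on any continuum in $\Omega_k$), which neither your argument nor the paper's text supplies. If you restrict the claim to convergence in $m_2$ (together with locally uniform convergence on compacts eventually free of zeros of $Q_{n,k}$), your proof is complete and matches what can actually be extracted from Theorem \ref{thm1} and Theorem \ref{RS}.
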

The support $\Gamma_k^1$ of the measure $\lambda_k$ is a finite union of analytic arcs. 
In the complement domains $\Omega_k = \CC\setminus \Gamma_k^1$ each
function $w = C^{\lambda_k}(z)$ is a branch of the derivative $G'$ of
the $G$-function for $\mathcal R$.

Next theorem is also related to theorem \ref{thm1}, but it is not an immediate corollary of this theorem.

\begin{thm} \label{thmAsy-2}
Under the assumptions of the theorem \ref{thm1} for any branch of the remainder $R_n$ in any simpliconnected domain $\mc D \subset \mathcal R$ we have convergence as $n \to \infty$
in plane Lebesgue measure $m_2$ and in capacity on compact sets in $\Omega_k = \ol\CC \setminus\Gamma_k^1$.
\begin{equation} \label{R}
 \frac 1n \frac{R'_n(\zeta)}{R_n(\zeta)}\ \overset{m_2}{\to}\  \ G'(\zeta)\quad \text{and}\quad
 \frac 1n \log|R_n(\zeta)|  \ \overset{\mcap}{\to}\ g(\zeta), \quad z \in \mc D
\end{equation}
\end{thm}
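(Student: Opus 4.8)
The plan is to deduce both convergences in \eqref{R} from the single potential-theoretic statement $\tfrac1n\log|R_n|\to g$ on the whole surface $\mc R$, and then to pass from the modulus to the logarithmic derivative. For the reduction, fix a simply connected $\mc D\subset\mc R$ and a branch of $R_n$ on it; where $R_n\ne0$ the function $\tfrac1n\log R_n$ is analytic with real part $\tfrac1n\log|R_n|$, and $\tfrac1n\,R_n'/R_n=2\,\partial_z\bigl(\tfrac1n\log|R_n|\bigr)$. If $\tfrac1n\log|R_n|\to g=\Re G$ in $L^1_{loc}(\mc D)$, then taking $\partial_z$ in the sense of distributions gives $\tfrac1n\,R_n'/R_n\to 2\partial_z g=G'$. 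The zeros of $R_n$ carry the Riesz mass of the subharmonic functions $\tfrac1n\log|R_n|$; since the limit $g$ is harmonic off the cuts and the infinities, that mass tends to $0$ on compact subsets of the interior, so the convergence $\tfrac1n\,R_n'/R_n\to G'$ holds in plane measure $m_2$ and in capacity away from an asymptotically negligible zero set. Thus everything reduces to proving $\tfrac1n\log|R_n|\to g$ on $\mc R$.

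On the physical sheet $\Omega_0$ this is essentially in hand: Theorem \ref{thm3} identifies the limit of $\tfrac1n\log|R_n|$ on $\ol\CC\setminus\Gamma$ with the total equilibrium potential, which by the identity \eqref{G-3} of Theorem \ref{RS} agrees with $g$ on $\Omega_0$ up to an additive constant. That constant is pinned down by the behaviour at $\infty^{(0)}$, where the zero of order $ns+s$ of $R_n$ forces $\tfrac1n\log|R_n|\sim -s\log|z|$, which is exactly the normalization of $g$ at $\infty^{(0)}$ prescribed in \eqref{G-2}. Hence $\tfrac1n\log|R_n|\to g$ on the zeroth sheet.

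For the remaining sheets I would continue $R_n$ across a component $\Gamma_k^1$ of the cut. Under the Angelesco hypothesis only $f_k$ is discontinuous across $\Gamma_k^1$, so the jump of the remainder there is $R_n^+-R_n^-=q_{n,k}\,(f_k^+-f_k^-)$, and the continuation onto the $k$-th sheet is $R_n^{(k)}=R_n-q_{n,k}\bigl(f_k^{(0)}-f_k^{(k)}\bigr)$, with $f_k^{(0)},f_k^{(k)}$ the two branches of $f_k$. On $\Gamma_k^1$ the gluing of $\mc R$ makes the boundary values of $R_n^{(k)}$ coincide with those of $R_n$, so there $\tfrac1n\log|R_n^{(k)}|$ matches the limit already known on $\Omega_0$. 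In the interior of $\Omega_k$ the term $q_{n,k}\bigl(f_k^{(0)}-f_k^{(k)}\bigr)$ dominates: by Theorems \ref{thm1} and \ref{thm2} one has $\tfrac1n\log|q_{n,k}|\to -U^{\lambda_k}-u_k$, while $\tfrac1n\log|R_n|\to g|_{\Omega_0}$, and the comparison $g|_{\Omega_0}-g|_{\Omega_k}=W_k-w_k<0$ off $\Gamma_k^1$ coming from \eqref{13} renders the remainder part exponentially subdominant. Consequently $\tfrac1n\log|R_n^{(k)}|\to -U^{\lambda_k}+\mathrm{const}=g|_{\Omega_k}$, the constants being reconciled by the leading-coefficient asymptotics of Theorem \ref{thm2}; this is also consistent with $\tfrac1n(R_n^{(k)})'/R_n^{(k)}\approx\tfrac1n\,Q_{n,k}'/Q_{n,k}\to C^{\lambda_k}=G'$ from Theorem \ref{thmAsy-1}. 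Assembling the sheets, any subharmonic subsequential limit $h$ of $\tfrac1n\log|R_n|$ on $\mc R$ is harmonic off the cuts and the infinities, continuous across every $\Gamma_k^1$, and has logarithmic singularities of exactly the orders in \eqref{G-2}; by uniqueness of the $g$-function of $\mc R$ (Theorem \ref{RS}) we get $h=g$, and independence of the subsequence yields the full convergence.

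The main obstacle is controlling the zeros of the continuations $R_n^{(k)}$ in the bulk of $\Omega_k$, together with the behaviour near the remaining cuts $\Gamma_j^1$ with $j\ne k$, where the subdominance inequality $W_k-w_k<0$ is not guaranteed a priori and the continuation must be followed onto further sheets. Rigorously excluding a limiting positive mass of zeros of $R_n^{(k)}$ in the interior of $\Omega_k$ — so that the limit $h$ is genuinely harmonic there rather than merely subharmonic — is the delicate point, and it is precisely this global control that makes the statement more than an immediate corollary of Theorem \ref{thm1}.
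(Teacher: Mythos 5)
First, a point of context: the paper does not prove Theorem \ref{thmAsy-2}. Immediately after stating it the author writes that a proof would require at least a generalization of part (ii) of Theorem \ref{thm3-1} to Hermite--Pad\'e polynomials, and moreover convergence on all sheets rather than only the principal one, and that the complete proof ``will take significant additional efforts'' and is not presented. So there is no proof in the paper to compare yours against; your proposal can only be judged on its own merits as an attempted proof of a result the author deliberately left open.

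Judged that way, your outline follows the natural route (and the one the author hints at): establish $\frac1n\log|R_n|\to g$ on the principal sheet via Theorem \ref{thm3} and the identification \eqref{G-3}, continue $R_n$ across $\Gamma_k^1$ using the jump relation $R_n^+-R_n^-=q_{n,k}\bigl(f_k^+-f_k^-\bigr)$, and argue that on the $k$-th sheet the term $q_{n,k}\bigl(f_k^{(k)}-f_k^{(0)}\bigr)$ dominates. But there are genuine, and in part unacknowledged, gaps. (i) The subdominance inequality $g|_{\Omega_0}-g|_{\Omega_k}=W_k-w_k<0$ is asserted to come from \eqref{13}, but \eqref{13} gives $W_k\ge w_k$ on $\Gamma_k$ and says nothing about the sign of $W_k-w_k$ away from $\Gamma_k$; on $\Gamma_j^1$ with $j\ne k$ one computes $W_k-w_k=(w_j-w_k)+U^{\lambda_k-\lambda_j}$, whose sign is not controlled by anything proved in the paper, and since $W_k-w_k$ is superharmonic in $\ol\CC\setminus\Gamma_k^1$ (it carries the mass of the $\lambda_j$, $j\ne k$) the maximum principle cannot be used to propagate the boundary value $0$ on $\Gamma_k^1$ into a global negativity statement. (ii) Even granting dominance, passing from an $L^1_{loc}$ limit of $\frac1n\log|R_n^{(k)}|$ to convergence in capacity, and from there to convergence of $\frac1n R_n'/R_n$ in $m_2$, requires excluding a positive limiting density of zeros of the continued remainder in the interior of $\Omega_k$; taking $\partial_z$ of an $L^1_{loc}$ limit in the sense of distributions does not by itself yield convergence in planar measure. (iii) The whole construction rests on Theorem \ref{thm3}, which is itself stated without proof in the paper. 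You candidly flag (i) near the other cuts and (ii) as the delicate points, which is the right diagnosis --- but it means the proposal is a programme rather than a proof, and the unresolved points are exactly those the author identifies as requiring significant additional effort.
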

A proof of this theorem would require at least a generalization of the part (ii) of the theorem \ref{thm3-1} to the case of Hermite--Pad\'e polynomials. Actually it contains more then that since part (ii) of the theorem \ref{thm3-1} assert convergence only on the main sheet of the Riemann surface.
The complete proof will take significant additional efforts and we do not present it in this paper.

\subsubsection{Riemann surface: algebraic equation.}
As a corollary theorem \ref{thmAsy-1} we derive algebraic equation whose solution is the derivative $G'$ of complex $G$-function. We state a theorem for the case $s = 2$ since it contains all the important details and at the same time it is much less crouded.

 \begin{thm}\label{thmEqu}
Let $\vec f = (f_1, f_2) \in \mc A$ be a vector function and the Angelesco condition is satisfied for $\vec f$. Let $G(z) = G(z; \vec f)$ be the $G$-function for the associated Riemann surface $\mc R(\vec f)$.
For $ j = 1,2$ let $A_j(z) = z^{p_j} + \dots $ be polynomials with roots at the branch points of $f_j$ (so that $p_j = \#(e_j)$ and $ p = p_1 +p_2$.

Then there exist polynomials $E(z) =  z^{p -2} + \dots $ and $F(z) =  z^{p -3} + \dots $ such that the derivative $w = G'(z)$ of $G(z; \vec f)$ is defined by equation
  \begin{equation} \label{Alg}
 A(z) w(z)^3 - 3 E(z) w(z) +2 F(z) = 0
\end{equation}
Moreover, all zeros of the polynomial $F$ are zeros of $G'$ and this polynomial may be representation in the form $F(z) = V_1(z) V_2(z) B(z) $ where $\deg V_1 = p_1 -2, \ $ $\deg V_2 = p_2 -2, \ $ and $B(z) = z - b.$  Zeros of $V_1$ and $V_2$ are identified with zeros of the quadratic differential whose trajectories constitute the components of the extremal compact set $\vec \Gamma$.
 \end{thm}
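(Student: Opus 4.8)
The plan is to realize $w=G'$ as a global algebraic function of degree $s+1=3$ on $\mc R$ and to read off its minimal equation from the three branches furnished by Theorem \ref{thmAsy-1}. First I would record the branches: by \eqref{C} and \eqref{G-3}, over a generic $z$ the three values of $w=G'$ on the sheets $\Omega_0,\Omega_1,\Omega_2$ are
$$w_0=-\bigl(C^{\lambda_1}+C^{\lambda_2}\bigr),\qquad w_1=C^{\lambda_1},\qquad w_2=C^{\lambda_2},$$
where $C^{\lambda_k}(z)=\int d\lambda_k(t)/(z-t)$. The decisive structural fact is $w_0+w_1+w_2=0$, which reflects the normalization $\sum g(\infty^{(i)})=0$ together with the divisor \eqref{G-2}. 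Since $w$ takes these three values, its elementary symmetric functions are single-valued meromorphic functions on $\overline\CC$, hence rational, and the vanishing of the first one removes the quadratic term. Writing $C_k=C^{\lambda_k}$, a direct computation gives the monic cubic
$$w^3-\bigl(C_1^2+C_1C_2+C_2^2\bigr)w+C_1C_2\bigl(C_1+C_2\bigr)=0.$$

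Next I would multiply by $A=A_1A_2$ and show the two lower coefficients become polynomials. Off $\Gamma_1^1\cup\Gamma_2^1$ the $C_k$ are analytic, so poles can only occur on these supports. Near $\Gamma_1^1$ the branch $w_2=C_2$ is analytic by Angelesco disjointness, while the $S$-property \eqref{14} and lemma \ref{lem10} applied to the $k=1$ component, whose external field is $\varphi_1=\tfrac12U^{\lambda_2}$, make
$$B_1(z)=A_1(z)\Bigl(C_1+\tfrac12C_2\Bigr)^2=\tfrac14A_1(w_1-w_0)^2$$
holomorphic in a neighborhood of $\Gamma_1$. Expressing the symmetric functions through $w_1-w_0$, $w_0+w_1=-C_2$ and $w_2=C_2$ then shows that after multiplication by $A_1$ their only possible poles on $\Gamma_1^1$ sit at the zeros of $A_1$ and are cancelled; the symmetric argument handles $\Gamma_2^1$. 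Being single-valued, free of finite poles and of polynomial growth at infinity, the coefficients are entire, hence polynomials. Setting $3E=A(C_1^2+C_1C_2+C_2^2)$ and $2F=AC_1C_2(C_1+C_2)$ produces \eqref{Alg}; from $C_k=1/z+O(1/z^2)$ one gets $E=z^{p-2}+\dots$ and $F=z^{p-3}+\dots$, the asserted degrees.

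For the statement that the zeros of $F$ are zeros of $G'$, note that $w=0$ solves \eqref{Alg} precisely over the zeros of $F$, so these are exactly the projections of the finite zeros of the function $w=G'$ on $\mc R$. A divisor count pins them down: $G'$ has simple poles only at the $p$ branch points over the zeros of $A$, where $G'\sim(z-a)^{-1/2}$, hence $p$ zeros on $\mc R$; three of these are $\infty^{(0)},\infty^{(1)},\infty^{(2)}$, where $G'=O(1/z)$. Thus exactly $p-3$ zeros are finite, matching $\deg F=p-3$, and every zero of $F$ is a zero of $G'$.

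The remaining step, the factorization $F=V_1V_2B$, is where the real work lies and is the main obstacle. The plan is to separate the $p-3$ finite zeros of $G'$ by the sheet on which they occur: the zeros of $w_1=C_1$ in $\Omega_1$, of $w_2=C_2$ in $\Omega_2$, and of $w_0=-(C_1+C_2)$ in $\Omega_0$. A model computation (disjoint real supports) shows that $w_0$ changes sign exactly once in the gap separating $\Gamma_1^1$ from $\Gamma_2^1$, which supplies the single factor $B=z-b$, while the sheet-$j$ zeros number $p_j-2$ and are to be matched, through the component-wise hyperelliptic reduction of lemma \ref{lem9} and the quadratic differential $-(B_j/A_j)(dz)^2$ of lemma \ref{lem10}, with the zeros $V_j$. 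The tie to the equation comes from the identities $(w_0-w_1)^2=4B_1/A_1$ and $(w_0-w_2)^2=4B_2/A_2$, which exhibit the discriminant of \eqref{Alg} as $E^3-AF^2$ equal to a constant multiple of $A^2V_1V_2(C_1-C_2)^2$, so that $V_1V_2$ divides it. Making the sheet-by-sheet identification exact is delicate: it requires an argument-principle count of the zeros of each individual branch over the non-dividing continua $\Gamma_j^1$ and a proof that precisely one extra zero lives on the middle sheet. I expect this bookkeeping, rather than the derivation of \eqref{Alg}, to absorb essentially all of the effort, and it is likely to need a genericity (simple-zero) hypothesis on $V_1,V_2$.
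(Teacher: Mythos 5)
Your derivation of \eqref{Alg} is essentially the paper's own proof. The paper takes the same three branches $C^{\lambda_1}$, $C^{\lambda_2}$, $-C^{\lambda}$ of $G'$ from Theorem \ref{thmAsy-1}, forms $P(w,z)=(w-C^{\lambda_1})(w-C^{\lambda_2})(w+C^{\lambda})$, observes that the coefficients are rational with at most simple poles on $e_1\cup e_2$ because each $C^{\lambda_k}$ behaves like $c(z)(z-a)^{-1/2}$ at a branch point, and reads off $r_0=2F/A$, $r_1=-3E/A$ and the degrees of $E,F$ from the expansions at infinity; your detour through the $S$-property and Lemma \ref{lem10} to justify single-valuedness across the cuts is a harmless, slightly more careful variant of the same step. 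The identification of the zeros of $F$ with the projections of the finite zeros of $G'$, together with the count ``order $p$, three zeros at infinity, hence $p-3$ finite zeros'', is also exactly the paper's argument.

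Where you stop short --- the factorization $F=V_1V_2B$ --- the paper's proof is a single sentence: $G'$ has $p$ zeros on $\mc R$, of which $p_k-2$ sit at the zeros of $V_k$, three at infinity, and one remaining at $b$. No justification is given for placing zeros of $G'$ at the zeros of $V_k$, and your own identity shows why one is needed: $(w_1-w_0)^2=4B_1/A_1$ says that a zero $v$ of $V_1$ is a point where the branches attached to sheets $0$ and $1$ collide, and their common value there is $-\tfrac12C^{\lambda_2}(v)$, which has no visible reason to vanish. Thus $V_1V_2$ manifestly divides the discriminant of \eqref{Alg}, as you note, but that does not yield $V_1V_2\mid F$; indeed at such a point $2F(v)=A(v)\,C^{\lambda_1}C^{\lambda_2}(C^{\lambda_1}+C^{\lambda_2})(v)=-\tfrac14A(v)\,C^{\lambda_2}(v)^3$, which points the other way. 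So the ``delicate bookkeeping'' you defer is not supplied by the paper either: to finish, you must either carry out the argument-principle count of the zeros of each branch $C^{\lambda_k}$ on its own sheet (showing there are exactly $p_k-2$ of them, plus one zero of $C^{\lambda}$ on the middle sheet) and accept that the resulting $V_k$ are the sheet-$k$ zero sets of $G'$ rather than the quadratic-differential zeros of Lemma \ref{lem10}, or reconcile the two sets of points by an argument absent from both your sketch and the paper. Your instinct that this, and not the derivation of \eqref{Alg}, is where the real content lies is correct.
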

In the particular case when $p_1 = p_2 = 2$ the equation \eqref{Alg} has been presented in \cite{ApKuVa07} and it plays a fundamental role in this paper.

\begin{proof}
Let $\vec \Gamma(\vec f) = (\Gamma_1, \Gamma_2)$ be the extremal vector-compact set for $\vec f$ and
$\vec \lambda = (\lambda_1, \lambda_2)$ be corresponding equilibrium measure.
By
$$
C^\mu = \int(x-z)^{-1} d\mu(x)
$$
we denote the Cauchy transform a measure $\mu$. Function $C^\mu$ is also the derivative of the complex potential of $\mu$. By theorem \ref{thmAsy-1} the three branches of the multivalued analytic function $G'(z)$  in $ \ol\CC \setminus  (e_1\cup e_2)$ have representations
\begin{equation} \label{C-1.1}
G'(z) = C^{\lambda_k} (z),  \quad z \in \Omega_k = \ol\CC \setminus \Gamma_k,
\quad \text{and}\quad G'(z) = C^{\lambda} (z),  \quad z \in \Omega = \ol\CC \setminus \Gamma
\end{equation}
where $\Gamma = \Gamma_1 \cup \Gamma_2$ and $\lambda = \lambda_1 + \lambda_2$.

Let $P(w, z) = w^3 + r_1(z) w(z) + r_0(z)$  be the polynomial in $w$ with rational coefficients $r_j(z)$ associated with algebraic function $G'$, that is $ P(G'(z), z ) = 0.$ It follows from \eqref{C-1.1} that
\begin{equation} \label{C-1}
P(w, z) = (w - C^{\lambda_1} (z)) (w - C^{\lambda_2} (z)) (w + C^{\lambda} (z))
\end{equation}
and, therefore,
\begin{equation} \label{C-2}
r_0(z) =  C^{\lambda_1} (z) C^{\lambda_2}(z) C^{\lambda} (z)
\quad \text{and}\quad r_1(z) =
- C^{\lambda_1} (z)  C^{\lambda_2} (z)+ ( C^{\lambda} (z) )^2
\end{equation}
Each function $C^{\lambda_k} (z)$ has singularity at any $z = a \in e_k$ of the form $c(z)(z-a)^{-1/2}$
where $c(z)$ is analytic and $\ne 0$ at $z = a$ and the same is true for $C^{\lambda} (z)$ and any
$a \in e_1 \cup e_2$. It follows that each function $r_j(z)$ has at most  a simple pole at any $a \in e_1 \cup e_2.$  On the other hand as $z \to \infty$ we have $z C^{\lambda_1} (z) \to -1$ for $k =1,2$ and
$z C^{\lambda} (z) \to -2$. From here $r_0(z) = 2/z^3 +O(1/z^4)$ and $r_1(z) = -3/z^2 +O(1/z^3)$
and therefore $r_0 = 2F/A$ and $r_1 = -2E/A$ where $F$ and $E$ are some monic polynomials of degrees $p-3$ and $p-2$ respectively. Thus, after multiplication by $A$ polynomial $P(w, z)$ may be reduced to polynomial in two variables $w, z$ of the form indicated in \eqref{Alg}.

It follows from \eqref{Alg} that projections of zeros of algebraic
function $w = G'$ coincide with zeros of the polynomials $F(z) = 2z
^{p-3} + \dots.$ Function $G'(\zeta)$ is rational function of order $p$
on $\zeta \in\mc R$ (assuming that there are no cancelations) and,
therefore, it should have $p$ zeros on $\mc R$. Out of them it has $p_k
-2$ zeros at zeros of polynomial $V_k$ associated with $\Gamma_k$ where
$k = 1,2$. It has a simple zero at each copy of $\infty$ and this
accounts for $p-1$ zeros. It remains one more zero whose projection on
the plane we have denoted by $b$.
\end{proof}

\subsection{ Subclass $\LL \subset \mc A.$ Laguerre type equation. Conjecture.}
There are several subclasses of class $\mc A$ which may be considered
as generalizations of classical weights (their Cauchy transforms).
Corresponding Pad\'e and Hermite--Pad\'e polynomials are often called
semiclassical, since certain properties of classical orthogonal
polynomials are preserved for them. An important example is the class
$\LL$ defined as follows. For fixed set $e = \{a_1,\dots,a_p\}$ of
$p\geq 2$ distinct points we denote by $\LL_{e}$ class of functions $f$
of the form
\begin{equation} \label{L}
\LL_{e}= \left\{f(z)=f(z;\alpha):=\prod_{j=1}^p(z-a_j)^{\alpha_j}:\quad
\alpha_j\in\CC\setminus\Z,\ \right\}
\end{equation}
We also assume that $\sum_{j=1}^p\alpha_j=0$ and branch at infinity is selected by normalization $f(\infty)=1$. Clear that $\LL_e\subset \mathcal A(\ol\CC \setminus e).$

Let  $\LL = \cup \LL_{e}$ be the union of classes $\LL_{e}$ for all sets $e$.  We have $\LL \subset \mathcal A.$ Functions $f \in \LL$ are called sometimes the generalized Jacobi functions.

An important property of this class that for a fixed $e$ it contain
function with arbitrary branch type, so that the family of extremal
cuts associate with functions $\ f \in \LL_e$ is the same as
corresponding family for functions $\ f \in \mc A_e$.

Another important fact is that corresponding  Hermite--Pad\'e polynomials satisfy a Laguerre-type differential equation with polynomial coefficients. More exactly, let $\vec f = (f_0, f_1, \dots, f_s), \, f_k\in \LL$ and $q_{k,n},\, k= 0,1, \dots, s$, be associated Hermite--Pad\'e polynomials.  Then for any $n\in \N$ each of the following $s+1$ functions
$$w =  q_{0,n},\,\, q_{1,n}f_1,\,\,  \dots,\,\,  q_{s,n}f_s$$
 satisfy a linear differential equation
 \begin{equation}\label{DE}
\Pi_{s+1}(z)w^{(s+1)}+ \Pi_{s}(z)w^{(s) }+ \dots + \Pi_1(z)w'+\Pi_0(z)w = 0
\end{equation}
where coefficients $\Pi_k(z) = \Pi_{k,n}(z)$ are polynomials depending on $n$ whose degrees are altogether bounded by a number depending only on numbers of branch points of component functions $f_k$.

The equation has been derived in \cite{MaRaSu16} where some corollaries for
asymptotics has been obtained for simple particular functions $f \in \LL$
with a few branch points. After the paper was published a general
conjecture were developed by the authors of \cite{MaRaSu16} which we
present here in a somewhat abbreviated form. The two assertions below, if
proved, would create a general basis for investigation of asymptotics of
Hermite--Pad\'e polynomials.

{\bf Conjecture}. Let $\vec f = (f_1, \dots, f_s)$ and $f_k \in \mc A$. Let $A_k(z)$ be the monic polynomial with zeros at branch points of $f_k$ and $A(z) = \prod_{k=1}^s A_k(z)$.

Then there exists a finite set $\widehat {\mc R} = \widehat {\mc R}(A)$ of algebraic Riemann surface $\mc R$ depending only on $A$ which satisfy the following two condition.

(1) Projections of branch points of Riemann surface $\mc R$ belongs to the set of zeros of the polynomial $AV$ where $V(z)$ is some polynomial of degree $\deg A - 2$.

(2) Projections of poles of the derivatives $G'$ of the complex  $G$ function for $\mc R$ belong to the set of zeros of $A$.

Moreover, for a given $\vec f = (f_1, \dots, f_s)$ there is a unique
Riemann surface  $\mc R \in \widehat {\mc R}$ such that for any (properly normalized) branch of the remainder $R_n$ associated with $\vec f$  the sequence of functions $\frac 1n \log |R_n(z)|$ converges in capacity to the $g$-function of $\mathcal R \in \widehat {\mc R}$ in any domain $\mc D \subset\mathcal R$ assuming that $R_n$ has a single-valued branch in $\mc D$.\\


In the Angelesco case the part of the conjecture is proved in this
paper. It follows from the results of this paper that in the Angelesco
case we can go further on and make a complete description of $\mathcal
R(\vec f).$ For the general case such a complete description is known
only for a number of particular cases and it contains a large number of
details which we will not discuss here; see \cite{ApKuVa07}.

Convergence of the sequence of functions $\frac 1n \log |R_n(z)|$ in capacity in Angelesco case is stated above as theorem \ref{thmAsy-2}. As mentioned above, the proof would require significant additional efforts, but it is entirely within the range of the methods used in this paper.

In case of general assumptions on the configuration of branch points the conjecture is not proved yet even for functions from $\LL$ in \eqref{L}. However, it seems that all the necessary tools are available if the methods of  \cite{MaRaSu16} are combined with method presented in  \cite{Rak16} and in the current paper. We make a few comments which outline main steps of method, which may be used to prove the conjecture for class $\LL$.

First, we make the Riccati substitution $\quad u_n = \frac {w'_n}{n\
w_n}$ in the equation \eqref{DE} which reduces this equation to a
nonlinear differential equation with rational coefficients depending on
$n$. The obtained equation may be normalized in such a way that
families of coefficient functions become compact set with respect to
uniform convergence in spherical metric. This makes families of
solutions the compact set with respect to convergence in plane measure $m_2$.
In particular, any sequence of functions $R'_n/(n R_n)$ has
$m_2$-convergent subsequence and same is true for Cauchy transforms
$C_{n,k}$ of counting measures of Hermite--Pad\'e in \eqref{C}.

It is comparatively easy to prove that any limit equation is algebraic.
The original differential equation was obtained as equality to zero of
some Wronskian. Riccati substitution reduces this Wronskian to a
Vandermond determinant whose elements are convergent in $m_2$ measure.
This proves that the $m_2$-limit $C_k(z)$ of any convergent subsequence
of the sequences $C_{n,k}$ has the following property: there exist a
polynomial in two variable $P(w, z)$ (it does not depend on $k$) such
that $P(z, C_k(z)) = 0$ for $m_2$- almost all $z$ in plane.

There is a well known and partially proved conjecture (see
\cite{BeRu01}, \cite{BjBoBo11}) that equality $P(z, C^\mu(z)) = 0$ for
$m_2$-almost all $z$ in plane for a Cauchy transform of a positive
measure $\mu$  implies that the this measure is supported on finite
number of analytic arcs. If proved this would allow to assert that any
convergent subsequence of the sequences $R'_n/(n R_n)$ converges in
plane measure to an algebraic function $F$ which is a meromorphic
function on an algebraic Riemann surface $\mathcal R$ and that $F = G'$
where $G$ is the complex $G$-function for $\mathcal R$.

To complete the proof of Conjecture for class $\LL$ we need to prove that $\mathcal R$ does not depend on subsequence. This will be a corollary of the uniqueness of the Riemann surface $\mathcal R$ introduced in the conjecture. This part of the problem may be more difficult since it is related to a nontrivial combinatorics which we will not discuss here.



\def\cprime{$'$}
\providecommand{\bysame}{\leavevmode\hbox to3em{\hrulefill}\thinspace}
\providecommand{\MR}{\relax\ifhmode\unskip\space\fi MR }
\providecommand{\MRhref}[2]{%
  \href{http://www.ams.org/mathscinet-getitem?mr=#1}{#2}
}
\providecommand{\href}[2]{#2}

\end{document}